\documentclass{article}%
\usepackage{amssymb}
\usepackage{graphicx}
\usepackage{amsmath}
\usepackage{amsfonts}%
\setcounter{MaxMatrixCols}{30}
\providecommand{\U}[1]{\protect\rule{.1in}{.1in}}
\newtheorem{theorem}{Theorem}

\newtheorem{axiom}[theorem]{Axiom}

\newtheorem{corollary}[theorem]{Corollary}

\newtheorem{definition}[theorem]{Definition}

\newtheorem{lemma}[theorem]{Lemma}
\newtheorem{notation}[theorem]{Notation}

\newtheorem{proposition}[theorem]{Proposition}
\newtheorem{remark}[theorem]{Remark}

\newenvironment{proof}[1][Proof]{\textbf{#1.} }{\ \rule{0.5em}{0.5em}}
\begin{document}

\title{Synthetic Differential Geometry \\within \\Homotopy Type Theory\\I}
\author{Hirokazu NISHIMURA\\Institute of Mathematics, University of Tsukuba\\Tsukuba, Ibaraki 305-8571\\Japan}
\maketitle

\begin{abstract}
Both syntheticc differential geometry and homotopy type theory prefer
synthetic arguments to analytical ones. This paper gives a first step towards
developing synthetic differential geometry within homotopy type theory. Model
theory of this approach will be discussed in a subsequent paper.

\end{abstract}

\section{\label{s0}Introduction}

Homotopy type theory (cf. \cite{vo}), born at the crossroads of type theory
and homotopy theory in the first decade of this century (\cite{aw} and
\cite{vo1}) inspired by \cite{hof}, is expected to give a solid foundation to
mathematics. A large portion of classical homotopy theory has already been
developed within homotopy type theory with new formulations and new proofs of
celebrated classical results such as the Freudenthal suspension theorem, the
van Kampen theorem and the Whitehead theorem being discovered by the intimate
collaboration of men and the proof assistant system COQ in the process of developing.

Synthetic differential geometry is developed synthetically by using nilpotent
infinitesimals. For standard textbooks on synthetic differential geometry the
reader is referred to \cite{ko3} and \cite{la}. The principal objective in
this paper is to develop synthetic differential geometry within homotopy type
theory. Since both theories prefer synthetic arguments to analytic ones, there
is a tremendous affinity between them. In the next section (\S \ref{s1}) we
will set up the foundation for types of nilpotent infinitesimals and announce
the homotopical generalized Kock-Lawvere axiom. After enjoying elementary
differential calculus up to the Taylor expansion (cf. \cite{ko1} and
\cite{ko2}) in \S \ref{s2}, we will discuss microlinearity in \S \ref{s3} and
tangency in in \S \ref{s4} by using the machinery of set truncation.
\S \ref{s5} is devoted to strong differences. It culminates in a streamlined
presentation of the general Jacobi identity discussed in \cite{ni1} and
\cite{ni2}. The last section (\S \ref{s6}) deals with vector fields on a
microlinear type. In a subsequent paper we will discuss model theory of this approach.

\section{\label{s1}Nilpotent Infinitesimals}

\begin{axiom}
\label{a1.0}The type $\mathbb{R}$ is a set which is a $\mathbb{Q}$-algebra,
where $\mathbb{Q}$\ is the type of rational numbers.
\end{axiom}

\begin{definition}
A finitely presented $\mathbb{R}$-algebra of the form%
\[
\mathbb{R}\left[  X_{1},...,X_{n}\right]  /\left(  X_{1}^{m_{1}}%
,...,X_{n}^{m_{n}},f_{1}\left(  X_{1},...,X_{n}\right)  ,...,f_{k}\left(
X_{1},...,X_{n}\right)  \right)
\]
with $f_{i}$'s\ being polynomials in $X_{1},...,X_{n}$\ with coefficients in
$\mathbb{R}$\ is called a \underline{Weil algebra}. It should be recalled that
finitely presented $\mathbb{R}$-algebras are to be defined by higher induction
in homotopy type theory.
\end{definition}

\begin{notation}
Given a Weil algebra $\mathfrak{W}$, we denote by $\mathrm{Spec}_{\mathbb{R}%
}\mathfrak{W}$\ the type of homomorphisms of $\mathbb{R}$-algebras from the
$\mathbb{R}$-algebra $\mathfrak{W}$\ to the $\mathbb{R}$-algebra $\mathbb{R}$
By way of example, the type%
\[
\mathrm{Spec}_{\mathbb{R}}\mathbb{R}\left[  X\right]  /(X^{2})
\]
is equivalent to the subtype%
\[
D:\equiv\left\{  d:\mathbb{R}\mid d^{2}=0\right\}
\]
of the type $\mathbb{R}$, while the type%
\[
\mathrm{Spec}_{\mathbb{R}}\mathbb{R}\left[  X,Y\right]  /(X^{2},Y^{2},XY)
\]
is equivalent to the subtype%
\[
D\left(  2\right)  :\equiv\left\{  \left(  d_{1},d_{2}\right)  :D^{2}\mid
d_{1}d_{2}=0\right\}
\]
of the type $D^{2}$.
\end{notation}

\begin{definition}
Given a Weil algebra $\mathfrak{W}$, the type $\mathrm{Spec}_{\mathbb{R}%
}\mathfrak{W}$\ is called the \underline{infinitesimal type} associated to the
Weil algebra $\mathfrak{W}$.
\end{definition}

\begin{definition}
The diagram of infinitesimal types resulting from a finite limit diagram of
Weil algebras by application of the contravariant functor $\mathrm{Spec}%
_{\mathbb{R}}$\ is called a \underline{quasi-colimit diagram of infinitesimal
types}.
\end{definition}

\begin{axiom}
\label{a1.1}(\underline{Homotopical Generalized Kock-Lawvere Axiom}) Given a
Weil algebra $\mathfrak{W}$, the canonical homomorphism of $\mathbb{R}%
$-algebras\ from the $\mathbb{R}$-algebra $\mathfrak{W}$ to the $\mathbb{R}%
$-algebra $\mathrm{Spec}_{\mathbb{R}}\mathfrak{W}\rightarrow\mathbb{R}$\
\[
\lambda_{x:\mathfrak{W}}\lambda_{f:\mathrm{Spec}_{\mathbb{R}}\mathfrak{W}%
}f\left(  x\right)
\]
is an equivalence, namely,
\[
\mathfrak{W}\backsimeq\mathrm{Spec}_{\mathbb{R}}\mathfrak{W}\rightarrow
\mathbb{R}%
\]

\end{axiom}

\begin{remark}
Under Axiom \ref{a1.1}, a finite diagram of infinitesimal types is a
quasi-colimit diagram iff the diagram resulting from it by application of the
contravariant functor $\rightarrow\mathbb{R}$\ is a limit diagram.
\end{remark}

We recall the notion of a simplicial small object introduced in \S 4 of
\cite{ni1}.

\begin{notation}
(\underline{Simplicial infinitesimal types}) Given $n:\mathbb{N}$ and a finite
set $\mathfrak{p}$\ of lists of natural numbers $i$ with $1\leq i\leq n$, we
denote by $D^{n}\left\{  \mathfrak{p}\right\}  $ a set
\[
\left\{  \left(  d_{1},...,d_{n}\right)  :D^{n}\mid\prod_{\left(
i_{1},...,i_{k}\right)  :\mathfrak{p}}d_{i_{1}}...d_{i_{k}}=0\right\}
\]
By way of example, we have
\begin{align*}
D\left(  2\right)   &  =D^{2}\left\{  \left(  1,2\right)  \right\} \\
D\left(  3\right)   &  =D^{3}\left\{  \left(  1,2\right)  ,\left(  1,3\right)
,\left(  2,3\right)  \right\}
\end{align*}
while both $D^{2}\left\{  \left(  1\right)  \right\}  $\ and $D^{2}\left\{
\left(  2\right)  \right\}  $\ are equivalent to $D$\ via the equivalences
$\lambda_{d:D}\left(  0,d\right)  $ and $\lambda_{d:D}\left(  d,0\right)  $\ respectively.
\end{notation}

\begin{axiom}
\label{a2}The type $\mathbb{R}$ is a set endowed with a structure of a unitary
commutative ring such that
\begin{equation}
\prod_{f::D\rightarrow\mathbb{R}}\mathrm{isContr}\left(  \sum_{a:\mathbb{R}%
}\prod_{d:D}f\left(  d\right)  =_{\mathbb{R}}f\left(  0\right)  +ad\right)
\label{a2.1}%
\end{equation}
where $D$\ stands for the subtype
\[
\sum_{x:\mathbb{R}}x^{2}=_{\mathbb{R}}0
\]
of $\mathbb{R}$.
\end{axiom}

\section{\label{s2}Elementary Differential Calculus}

\begin{notation}
Given $f:\mathbb{R}\rightarrow\mathbb{R}$ and $x:\mathbb{R}$, we write
\[
f^{\prime}\left(  x\right)  :\mathbb{R}%
\]
for one of the propositionally identical $a:\mathbb{R}$ abiding by
\[
\prod_{d:D}\left(  \lambda_{y:D}f\left(  x+y\right)  \right)  \left(
d\right)  =_{\mathbb{R}}\left(  \lambda_{y:D}f\left(  x+y\right)  \right)
\left(  0\right)  +ad
\]
\ 
\end{notation}

\begin{proposition}
\label{p2.1}We have
\[
\prod_{f:\mathbb{R}\rightarrow\mathbb{R}}\mathrm{isContr}\left(
\sum_{g:\mathbb{R}\rightarrow\mathbb{R}}\prod_{x:\mathbb{R}}\prod
_{d:D}f\left(  x+d\right)  =_{\mathbb{R}}f\left(  x\right)  +g\left(
x\right)  d\right)
\]

\end{proposition}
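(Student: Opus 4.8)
The plan is to reduce the contractibility of the total space of ``candidate derivatives'' to the pointwise contractibility already guaranteed by Axiom \ref{a2}. The crucial maneuver is the distributivity of dependent products over dependent sums, i.e.\ the equivalence
\[
\left(\sum_{g:\mathbb{R}\rightarrow\mathbb{R}}\prod_{x:\mathbb{R}}\prod_{d:D}f\left(x+d\right)=_{\mathbb{R}}f\left(x\right)+g\left(x\right)d\right)\;\simeq\;\prod_{x:\mathbb{R}}\left(\sum_{a:\mathbb{R}}\prod_{d:D}f\left(x+d\right)=_{\mathbb{R}}f\left(x\right)+ad\right)
\]
which holds in homotopy type theory as a consequence of function extensionality (it is the ``type-theoretic axiom of choice'' in its guise as a plain equivalence, with no choice principle actually invoked). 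Since contractibility is preserved under equivalence, it suffices to show that the right-hand type is contractible.

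First I would fix $x:\mathbb{R}$ and feed the function $\lambda_{y:D}f\left(x+y\right):D\rightarrow\mathbb{R}$ into Axiom \ref{a2}. Writing $\tilde{f}:\equiv\lambda_{y:D}f\left(x+y\right)$, we have $\tilde{f}\left(d\right)=f\left(x+d\right)$ and $\tilde{f}\left(0\right)=f\left(x+0\right)=f\left(x\right)$, so the instance \eqref{a2.1} of Axiom \ref{a2} yields exactly
\[
\mathrm{isContr}\left(\sum_{a:\mathbb{R}}\prod_{d:D}f\left(x+d\right)=_{\mathbb{R}}f\left(x\right)+ad\right)
\]
for that $x$. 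Thus every fibre of the right-hand dependent product is contractible. Finally, a dependent product of contractible types is itself contractible (again by function extensionality: take the center to be the pointwise center and contract pointwise), so $\prod_{x:\mathbb{R}}\left(\cdots\right)$ is contractible, and transporting back along the equivalence completes the argument. Unwinding the equivalence, the induced center of contraction is of course $\lambda_{x:\mathbb{R}}f^{\prime}\left(x\right)$ in the notation preceding the statement.

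I expect the only genuinely delicate step to be the distributivity equivalence, or rather the care needed to verify that it respects contractibility in the direction we want; everything else is bookkeeping. In particular one must keep track of the normalization $f\left(x+0\right)=f\left(x\right)$ so that the hypothesis fed to Axiom \ref{a2} matches its conclusion on the nose, and one must invoke function extensionality twice --- once inside the distributivity equivalence and once to collapse the dependent product of contractible types. Since $\mathbb{R}$ is assumed to be a set (Axiom \ref{a1.0}), the identity types appearing are all mere propositions, which trivializes the higher coherence obligations and is precisely what makes the passage from the pointwise statement to the global one clean.
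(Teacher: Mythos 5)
Your proposal is correct and takes essentially the same approach as the paper: the paper's entire proof is the one-line citation of Axiom \ref{a2} together with the principle of unique choice (\S 3.9 of \cite{vo}), and your argument---feeding $\lambda_{y:D}f(x+y)$ into the axiom to get pointwise contractibility of $\sum_{a:\mathbb{R}}\prod_{d:D}f(x+d)=_{\mathbb{R}}f(x)+ad$, then globalizing via the $\Pi$/$\Sigma$ distributivity equivalence and the fact that a dependent product of contractible types is contractible---is precisely the standard unpacking of that citation. The two proofs rest on the same idea, yours merely spelling out the bookkeeping the paper leaves implicit.
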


\begin{proof}
This follows from the above axiom by the principle of unique choice (\S 3.9 of
\cite{vo}).
\end{proof}

\begin{notation}
Given $f:\mathbb{R}\rightarrow\mathbb{R}$, we write
\[
f^{\prime}:\mathbb{R}\rightarrow\mathbb{R}%
\]
for one of the propositionally identical $g:\mathbb{R}\rightarrow\mathbb{R}$
abiding by
\[
\prod_{x:\mathbb{R}}\prod_{d:D}f\left(  x+d\right)  =_{\mathbb{R}}f\left(
x\right)  +g\left(  x\right)  d
\]
Given $n:\mathbb{N}$, we can define
\[
f^{\left(  n\right)  }:\mathbb{R}\rightarrow\mathbb{R}%
\]
inductively on $n$.
\end{notation}

\begin{proposition}
\label{p2.2}We have
\[
\prod_{f,g::\mathbb{R}\rightarrow\mathbb{R}}\prod_{x:\mathbb{R}}\left(
fg\right)  ^{\prime}\left(  x\right)  =f^{\prime}\left(  x\right)  g\left(
x\right)  +f\left(  x\right)  g^{\prime}\left(  x\right)
\]

\end{proposition}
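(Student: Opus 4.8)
The plan is to reduce the identity to the uniqueness clause built into the Kock--Lawvere axiom (Axiom \ref{a2}), following the usual synthetic computation but taking care to justify the cancellation of the infinitesimal $d$ by contractibility rather than by a na\"ive division.

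First I would fix $f,g:\mathbb{R}\rightarrow\mathbb{R}$ and $x:\mathbb{R}$ and expand $\left(fg\right)\left(x+d\right)$ for a variable $d:D$. By the defining property of the derivative we have $f\left(x+d\right)=f\left(x\right)+f^{\prime}\left(x\right)d$ and $g\left(x+d\right)=g\left(x\right)+g^{\prime}\left(x\right)d$, so multiplying in the commutative ring $\mathbb{R}$ gives
\[
\left(fg\right)\left(x+d\right)=f\left(x\right)g\left(x\right)+\left(f^{\prime}\left(x\right)g\left(x\right)+f\left(x\right)g^{\prime}\left(x\right)\right)d+f^{\prime}\left(x\right)g^{\prime}\left(x\right)d^{2}.
\]
Since $d:D$ means precisely $d^{2}=_{\mathbb{R}}0$, the quadratic term vanishes, and I obtain
\[
\left(fg\right)\left(x+d\right)=\left(fg\right)\left(x\right)+\left(f^{\prime}\left(x\right)g\left(x\right)+f\left(x\right)g^{\prime}\left(x\right)\right)d
\]
for every $d:D$.

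This exhibits $f^{\prime}\left(x\right)g\left(x\right)+f\left(x\right)g^{\prime}\left(x\right)$, together with the displayed witness, as an inhabitant of the type
\[
\sum_{a:\mathbb{R}}\prod_{d:D}\left(fg\right)\left(x+d\right)=_{\mathbb{R}}\left(fg\right)\left(x\right)+ad,
\]
which Axiom \ref{a2}, applied to the function $\lambda_{d:D}\left(fg\right)\left(x+d\right)$, asserts to be contractible. But $\left(fg\right)^{\prime}\left(x\right)$ is, by its very definition, another inhabitant of the same type. Projecting onto the first component the path between these two inhabitants supplied by $\mathrm{isContr}$ yields the desired equality $\left(fg\right)^{\prime}\left(x\right)=f^{\prime}\left(x\right)g\left(x\right)+f\left(x\right)g^{\prime}\left(x\right)$.

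The only point demanding care is this last step: because $\mathbb{R}$ is a set, I cannot and need not cancel $d$ directly, and the conclusion must instead be read off from the uniqueness half of $\mathrm{isContr}$. Concretely, the main obstacle is organising the ring computation as an identification of two terms of the contractible sum-type, rather than as a formal cancellation, so that the propositional identity of the first components is delivered by contractibility; once this is set up, the algebra is routine and the nilpotency $d^{2}=0$ does all the real work.
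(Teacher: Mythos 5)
Your proposal is correct and takes essentially the same route as the paper's own proof: expand $\left(fg\right)\left(x+d\right)$ using the defining property of $f^{\prime}$ and $g^{\prime}$, kill the $d^{2}$ term by nilpotency, and conclude that the coefficient must be $\left(fg\right)^{\prime}\left(x\right)$. The only difference is that the paper leaves the final step implicit (``so that the desired conclusion follows''), whereas you spell out the uniqueness argument via the contractibility clause of Axiom \ref{a2}, which is exactly the justification the paper is tacitly relying on.
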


\begin{proof}
Let $d:D$. We have
\begin{align*}
f(x+d)g(x+d)  &  =\left(  f(x)+f^{\prime}(x)d\right)  \left(  g(x)+g^{\prime
}(x)d\right) \\
&  =f(x)g(x)+\left(  f^{\prime}\left(  x\right)  g\left(  x\right)  +f\left(
x\right)  g^{\prime}\left(  x\right)  \right)  d+f^{\prime}(x)g^{\prime
}(x)d^{2}\\
&  =f(x)g(x)+\left(  f^{\prime}\left(  x\right)  g\left(  x\right)  +f\left(
x\right)  g^{\prime}\left(  x\right)  \right)  d\\
&  \text{[since }d^{2}\text{ vanishes]}%
\end{align*}
so that the desired conclusion follows.
\end{proof}

\begin{proposition}
\label{p2.3}We have
\[
\prod_{f,g::\mathbb{R}\rightarrow\mathbb{R}}\prod_{x:\mathbb{R}}\left(  g\circ
f\right)  ^{\prime}\left(  x\right)  =g^{\prime}\left(  f\left(  x\right)
\right)  f^{\prime}\left(  x\right)
\]

\end{proposition}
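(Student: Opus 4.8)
The plan is to mimic the proof of the product rule (Proposition \ref{p2.2}), applying the first-order Taylor expansion that characterizes the derivative twice---once for $f$ at $x$, and then for $g$ at the point $f(x)$. First I would fix an arbitrary $d:D$ and unfold the composite as $(g\circ f)(x+d)=g(f(x+d))$. Using the defining property of $f'$ I would rewrite the inner argument as $f(x+d)=f(x)+f'(x)d$.

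The crucial step is then to apply the defining property of $g'$ at the point $f(x)$ with the infinitesimal increment $e:\equiv f'(x)d$. This is legitimate only if $e:D$, i.e.\ only if $e^{2}=_{\mathbb{R}}0$; and indeed $e^{2}=(f'(x))^{2}d^{2}=0$, since $d:D$ guarantees $d^{2}=0$. Having secured $e:D$, the characterization of $g'$ gives $g(f(x)+e)=g(f(x))+g'(f(x))e=g(f(x))+g'(f(x))f'(x)d$, whence $(g\circ f)(x+d)=(g\circ f)(x)+\bigl(g'(f(x))f'(x)\bigr)d$ for every $d:D$.

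Finally I would invoke the uniqueness built into Proposition \ref{p2.1}: the coefficient $a$ witnessing $(g\circ f)(x+d)=(g\circ f)(x)+ad$ for all $d:D$ is propositionally unique, since the relevant sum type is contractible. Hence the coefficient $g'(f(x))f'(x)$ exhibited above must agree with $(g\circ f)'(x)$, which is the desired conclusion. The main obstacle---really the only point that is not routine bookkeeping---is the observation that $f'(x)d$ again lies in $D$, which is exactly what permits the derivative rule for $g$ to be applied with this increment; once that is in hand, the argument collapses to the linear-in-$d$ computation already carried out for the product rule.
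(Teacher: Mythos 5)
Your proposal is correct and follows essentially the same route as the paper's proof: rewrite $g(f(x+d))$ via the defining property of $f'$, observe that $f'(x)d:D$ (the paper notes this with the remark ``since $f'(x)d:D$''), apply the defining property of $g'$ at $f(x)$ with that increment, and conclude by the uniqueness of the derivative coefficient. Your version merely makes explicit two points the paper leaves implicit, namely the verification $(f'(x)d)^{2}=0$ and the appeal to contractibility in Proposition \ref{p2.1}.
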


\begin{proof}
Let $d:D$. We have
\begin{align*}
g(f(x+d))  &  =g\left(  f(x)+f^{\prime}(x)d\right) \\
&  =g\left(  f(x)\right)  +g^{\prime}\left(  f\left(  x\right)  \right)
\left(  f^{\prime}\left(  x\right)  d\right) \\
&  \text{[since }f^{\prime}(x)d:D\text{]}\\
&  =g\left(  f(x)\right)  +\left(  g^{\prime}\left(  f\left(  x\right)
\right)  f^{\prime}\left(  x\right)  \right)  d
\end{align*}
so that the desired conclusion follows.
\end{proof}

\begin{notation}
Given $n:\mathbb{N}$, we write
\[
\mathrm{List}_{D}\left(  n\right)
\]
for the type of lists of elements in $D$\ with length $n$. Thus the type
$\mathrm{List}_{D}\left(  n\right)  $\ consists of $\left(  d_{1}%
,...,d_{n}\right)  $'s with $d_{i}:D$ $\left(  1\leq i\leq n\right)  $. In
particular, $\mathrm{List}_{D}\left(  0\right)  $\ consists only of $\left(
{}\right)  $. Given $m,n:\mathbb{N}$,\ we define
\[
\mathrm{Sym}_{n,m}:\mathrm{List}_{D}\left(  n\right)  \rightarrow\mathbb{R}%
\]
by induction on $n$. We decree that
\[
\mathrm{Sym}_{0,0}:\equiv\lambda_{x:\mathrm{List}_{D}\left(  0\right)  }1
\]
and
\[
\mathrm{Sym}_{0,m+1}:\equiv\lambda_{x:\mathrm{List}_{D}\left(  0\right)  }0
\]
whatever $m$\ may be. We decree that
\[
\mathrm{Sym}_{n+1,0}:\equiv\lambda_{x:\mathrm{List}_{D}\left(  n+1\right)  }1
\]
and that
\begin{align*}
&  \mathrm{Sym}_{n+1,m+1}\left(  d_{1},...,d_{n+1}\right) \\
&  :\equiv\mathrm{Sym}_{n,m+1}\left(  d_{1},...,d_{n}\right)  +d_{n+1}%
\mathrm{Sym}_{n,m}\left(  d_{1},...,d_{n}\right)
\end{align*}
whatever $m$\ may be.
\end{notation}

It is easy to see that

\begin{lemma}
Given $m,n:\mathbb{N}$, we have
\[
\mathrm{Sym}_{n,m}=\lambda_{x:\mathrm{List}_{D}\left(  n\right)  }0
\]
provided that $n<m$.
\end{lemma}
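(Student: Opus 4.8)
The plan is to induct on $n$, mirroring the very recursion that defines $\mathrm{Sym}$, while keeping $m$ universally quantified so that the inductive hypothesis reads: for every $m$ with $n < m$ one has $\mathrm{Sym}_{n,m} = \lambda_{x:\mathrm{List}_{D}(n)}0$. Since both sides of the asserted identity are functions out of $\mathrm{List}_{D}(n)$, I would establish each instance pointwise on a list $(d_{1},\ldots,d_{n})$ and then invoke function extensionality (\S 2.9 of \cite{vo}) to promote the pointwise equalities to an equality of functions.

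For the base case $n=0$, the hypothesis $0<m$ forces $m$ to be a successor $m'+1$, and then the defining clause $\mathrm{Sym}_{0,m'+1}:\equiv\lambda_{x}0$ gives the conclusion outright; here the equality is even judgmental, so no extensionality is needed.

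For the inductive step I assume the claim for $n$ and take any $m$ with $n+1<m$. Then $m\geq n+2\geq 2$, so I may write $m=m'+1$ with $m'=m-1\geq n+1$. Unfolding the recursion yields
\[
\mathrm{Sym}_{n+1,m'+1}\left(  d_{1},\ldots,d_{n+1}\right)  =\mathrm{Sym}_{n,m'+1}\left(  d_{1},\ldots,d_{n}\right)  +d_{n+1}\,\mathrm{Sym}_{n,m'}\left(  d_{1},\ldots,d_{n}\right)  .
\]
The crux is that both summands are controlled by the inductive hypothesis: from $m'\geq n+1$ we get $n<m'$, so $\mathrm{Sym}_{n,m'}$ vanishes identically, and from $m'+1=m\geq n+2$ we get $n<m'+1$, so $\mathrm{Sym}_{n,m'+1}$ vanishes identically as well. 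Hence the right-hand side equals $0+d_{n+1}\cdot 0=0$ for every choice of list, and function extensionality closes the step.

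I expect the only delicate point to be the arithmetic bookkeeping of the strict inequalities: one must verify that the decomposition $m=m'+1$ keeps \emph{both} recursive arguments $m'$ and $m'+1$ strictly above the threshold $n$, which is precisely where the strict hypothesis $n<m$ (rather than $n\leq m$) is consumed. Everything else reduces to a mechanical unfolding of the defining clauses of $\mathrm{Sym}$.
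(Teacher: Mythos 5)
Your proof is correct. The paper offers no proof at all for this lemma (it is stated with only the remark ``It is easy to see that''), and your induction on $n$ with $m$ universally quantified --- unfolding the recursive clause, applying the inductive hypothesis to both $\mathrm{Sym}_{n,m'}$ and $\mathrm{Sym}_{n,m'+1}$, and finishing with function extensionality --- is exactly the routine argument the paper leaves to the reader, with the arithmetic on the strict inequalities handled correctly.
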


Now we have the infinitesimal Taylor expansion theorem.

\begin{theorem}
\label{t2.1}Given $f:\mathbb{R}\rightarrow\mathbb{R}$,\ $x:\mathbb{R}$\ and
$n:\mathbb{N}$, we have
\begin{align*}
&  f\left(  x+\mathrm{Sym}_{n,1}\left(  d_{1},...,d_{n}\right)  \right) \\
&  =f\left(  x\right)  +f^{\prime}\left(  x\right)  \mathrm{Sym}_{n,1}\left(
d_{1},...,d_{n}\right)  +f^{\prime\prime}\left(  x\right)  \mathrm{Sym}%
_{n,2}\left(  d_{1},...,d_{n}\right)  +...\\
&  +f^{\left(  i\right)  }\mathrm{Sym}_{n,i}\left(  d_{1},...,d_{n}\right)
+...+f^{\left(  n\right)  }\mathrm{Sym}_{n,n}\left(  d_{1},...,d_{n}\right)
\end{align*}

\end{theorem}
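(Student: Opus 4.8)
The plan is to argue by induction on $n$, reading the right-hand side as the finite sum $\sum_{i=0}^{n} f^{(i)}(x)\,\mathrm{Sym}_{n,i}(d_{1},\ldots,d_{n})$, with the convention $f^{(0)} :\equiv f$ and recalling $\mathrm{Sym}_{n,0}=1$, so that the leading term is $f(x)$. Since $\mathbb{R}$ is a set, every identity in play is a mere proposition and the derivative $f'$ furnished by the second Notation of this section is a genuine function $\mathbb{R}\rightarrow\mathbb{R}$ satisfying the relation $f(y+d) =_{\mathbb{R}} f(y) + f'(y)\,d$ for every $y:\mathbb{R}$ and every $d:D$; I shall use this defining relation freely, crucially also at base points $y$ that are nilpotent perturbations of $x$. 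Because the statement is understood as quantified over all $f$, all $x:\mathbb{R}$ and all $(d_{1},\ldots,d_{n}):\mathrm{List}_{D}(n)$, the induction hypothesis will be available for $f'$ just as for $f$. The base case $n=0$ is immediate: $\mathrm{Sym}_{0,1}()=0$ and $\mathrm{Sym}_{0,0}()=1$, so both sides collapse to $f(x)$.

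For the inductive step, write $s :\equiv \mathrm{Sym}_{n,1}(d_{1},\ldots,d_{n})$, so that $\mathrm{Sym}_{n+1,1}(d_{1},\ldots,d_{n+1}) = s + d_{n+1}$ by the recurrence for $\mathrm{Sym}$. Evaluating the relation defining $f'$ at the base point $x+s$ with the infinitesimal $d_{n+1}:D$ yields
\[
f\bigl((x+s)+d_{n+1}\bigr) = f(x+s) + f'(x+s)\,d_{n+1}.
\]
I then apply the induction hypothesis twice, both at the point $x$ and the list $(d_{1},\ldots,d_{n})$: once to $f$, giving $f(x+s)=\sum_{i=0}^{n} f^{(i)}(x)\,\mathrm{Sym}_{n,i}(d_{1},\ldots,d_{n})$, and once to $f'$, giving $f'(x+s)=\sum_{i=0}^{n} f^{(i+1)}(x)\,\mathrm{Sym}_{n,i}(d_{1},\ldots,d_{n})$, where $(f')^{(i)}=f^{(i+1)}$ by the inductive definition of higher derivatives. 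Observe that $f'(x+s)$ depends only on $d_{1},\ldots,d_{n}$, so multiplying it by $d_{n+1}$ introduces no square of $d_{n+1}$ and no further vanishing is incurred here.

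It remains to collect terms. Substituting the two expansions and reindexing the second sum by $j:\equiv i+1$, the coefficient of $f^{(i)}(x)$ becomes $\mathrm{Sym}_{n,i}(d_{1},\ldots,d_{n}) + d_{n+1}\,\mathrm{Sym}_{n,i-1}(d_{1},\ldots,d_{n})$ for $1\leq i\leq n$, which is precisely $\mathrm{Sym}_{n+1,i}(d_{1},\ldots,d_{n+1})$ by the defining recurrence. At the two boundary degrees, the first sum contributes nothing at $i=n+1$ and the second nothing at $i=0$; the preceding Lemma, giving $\mathrm{Sym}_{n,n+1}=0$, confirms that the top coefficient $\mathrm{Sym}_{n+1,n+1}=d_{n+1}\,\mathrm{Sym}_{n,n}$ is correctly matched, while at $i=0$ both coefficients equal $1$. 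This produces $f(x+\mathrm{Sym}_{n+1,1}(d_{1},\ldots,d_{n+1})) = \sum_{i=0}^{n+1} f^{(i)}(x)\,\mathrm{Sym}_{n+1,i}(d_{1},\ldots,d_{n+1})$, closing the induction. I anticipate no deep obstacle: the only points demanding care are the legitimacy of evaluating the first-order relation at the nilpotent-perturbed base point $x+s$, and the purely combinatorial bookkeeping that aligns the reindexed sum with the $\mathrm{Sym}$ recurrence, including the vanishing boundary term supplied by the Lemma.
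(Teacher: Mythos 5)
Your proof is correct and takes essentially the same route as the paper's: induction on $n$, expanding $f\left(  x+\mathrm{Sym}_{n,1}+d_{n+1}\right)$ via the defining relation of $f^{\prime}$ at the nilpotent-shifted base point, applying the induction hypothesis to both $f$ and $f^{\prime}$, and regrouping coefficients by the $\mathrm{Sym}$ recurrence. Your explicit handling of the boundary terms and the remark that the quantification over all $f$ makes the hypothesis available for $f^{\prime}$ only make explicit what the paper leaves implicit.
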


\begin{proof}
By induction on $n$. If $n=0$, the theorem holds trivially. We have
\begin{align*}
&  f\left(  x+\mathrm{Sym}_{n+1,1}\left(  d_{1},...,d_{n+1}\right)  \right) \\
&  =f\left(  x+\mathrm{Sym}_{n,1}\left(  d_{1},...,d_{n}\right)
+d_{n+1}\mathrm{Sym}_{n,0}\left(  d_{1},...,d_{n}\right)  \right) \\
&  =f\left(  x+\mathrm{Sym}_{n,1}\left(  d_{1},...,d_{n}\right)
+d_{n+1}\right) \\
&  =f\left(  x+\mathrm{Sym}_{n,1}\left(  d_{1},...,d_{n}\right)  \right)
+d_{n+1}f^{\prime}\left(  x+\mathrm{Sym}_{n,1}\left(  d_{1},...,d_{n}\right)
\right) \\
&  =f\left(  x\right)  +f^{\prime}\left(  x\right)  \mathrm{Sym}_{n,1}\left(
d_{1},...,d_{n}\right)  +...+f^{\left(  n\right)  }\left(  x\right)
\mathrm{Sym}_{n,n}\left(  d_{1},...,d_{n}\right)  +\\
&  d_{n+1}\left\{  f^{\prime}\left(  x\right)  +f^{\prime\prime}\left(
x\right)  \mathrm{Sym}_{n,1}\left(  d_{1},...,d_{n}\right)  +...+f^{\left(
n+1\right)  }\left(  x\right)  \mathrm{Sym}_{n,n}\left(  d_{1},...,d_{n}%
\right)  \right\} \\
&  =f\left(  x\right)  +f^{\prime}\left(  x\right)  \left(  \mathrm{Sym}%
_{n,1}\left(  d_{1},...,d_{n}\right)  +d_{n+1}\right)  +\\
&  f^{\prime\prime}\left(  x\right)  \left(  \mathrm{Sym}_{n,2}\left(
d_{1},...,d_{n}\right)  +d_{n+1}\mathrm{Sym}_{n,1}\left(  d_{1},...,d_{n}%
\right)  \right)  +...+\\
&  f^{\left(  n+1\right)  }\left(  x\right)  d_{n+1}\mathrm{Sym}_{n,n}\left(
d_{1},...,d_{n}\right) \\
&  =f\left(  x\right)  +f^{\prime}\left(  x\right)  \mathrm{Sym}%
_{n+1,1}\left(  d_{1},...,d_{n+1}\right)  +f^{\prime\prime}\left(  x\right)
\mathrm{Sym}_{n+1,2}\left(  d_{1},...,d_{n+1}\right)  +...+\\
&  f^{\left(  n+1\right)  }\left(  x\right)  \mathrm{Sym}_{n+1,n+1}\left(
d_{1},...,d_{n+1}\right)
\end{align*}

\end{proof}

The familiar form of the Taylor expansion theorem goes as follows:

\begin{corollary}
We assume that the ring $\mathbb{R}$\ is an algebra over the rationals
$\mathbb{Q}$. Given $f:\mathbb{R}\rightarrow\mathbb{R}$,\ $x:\mathbb{R}$\ and
$n:\mathbb{N}$, we have
\begin{align*}
&  f\left(  x+\mathrm{Sym}_{n,1}\left(  d_{1},...,d_{n}\right)  \right) \\
&  =f\left(  x\right)  +f^{\prime}\left(  x\right)  \mathrm{Sym}_{n,1}\left(
d_{1},...,d_{n}\right)  +\frac{1}{2}f^{\prime\prime}\left(  x\right)  \left(
\mathrm{Sym}_{n,1}\left(  d_{1},...,d_{n}\right)  \right)  ^{2}+...+\\
&  \frac{1}{i!}\left(  \mathrm{Sym}_{n,1}\left(  d_{1},...,d_{n}\right)
\right)  ^{i}+...+\frac{1}{n!}\left(  \mathrm{Sym}_{n,1}\left(  d_{1}%
,...,d_{n}\right)  \right)  ^{n}%
\end{align*}

\end{corollary}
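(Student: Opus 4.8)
The plan is to reduce the corollary to the infinitesimal Taylor expansion of Theorem \ref{t2.1} by re-expressing each symmetric term $\mathrm{Sym}_{n,i}$ as a power of $\mathrm{Sym}_{n,1}$. Concretely, the key algebraic identity I would isolate and prove is
\[
\left(  \mathrm{Sym}_{n,1}\left(  d_{1},...,d_{n}\right)  \right)  ^{i}
=i!\,\mathrm{Sym}_{n,i}\left(  d_{1},...,d_{n}\right)
\]
for all $i:\mathbb{N}$ and all $\left(  d_{1},...,d_{n}\right) :\mathrm{List}_{D}\left(  n\right)$. Once this is in hand, the hypothesis that $\mathbb{R}$ is a $\mathbb{Q}$-algebra lets me invert $i!$, giving $\mathrm{Sym}_{n,i}=\frac{1}{i!}\left(  \mathrm{Sym}_{n,1}\right)  ^{i}$, and substituting this into Theorem \ref{t2.1} term by term turns the $i$-th summand $f^{\left(  i\right)  }\left(  x\right)  \mathrm{Sym}_{n,i}$ into the familiar $\frac{1}{i!}f^{\left(  i\right)  }\left(  x\right)  \left(  \mathrm{Sym}_{n,1}\right)  ^{i}$. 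This final substitution is purely formal.

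To prove the identity I would argue by induction on $n$, which keeps the nilpotency manipulations localized. The base case $n=0$ is immediate, since $\mathrm{Sym}_{0,1}=0$ and $\mathrm{Sym}_{0,i}=0$ for $i\geq1$ while $\mathrm{Sym}_{0,0}=1$. For the inductive step I would write $\mathrm{Sym}_{n+1,1}=\mathrm{Sym}_{n,1}+d_{n+1}$ and expand $\left(  \mathrm{Sym}_{n,1}+d_{n+1}\right)  ^{i}$. Because $d_{n+1}:D$ forces $d_{n+1}^{2}=0$, every binomial term carrying $d_{n+1}^{k}$ with $k\geq2$ vanishes, so only the first two survive and I obtain $\left(  \mathrm{Sym}_{n,1}+d_{n+1}\right)  ^{i}=\left(  \mathrm{Sym}_{n,1}\right)  ^{i}+i\left(  \mathrm{Sym}_{n,1}\right)  ^{i-1}d_{n+1}$. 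Applying the inductive hypothesis to both powers of $\mathrm{Sym}_{n,1}$ and then using the defining recursion $\mathrm{Sym}_{n+1,i}=\mathrm{Sym}_{n,i}+d_{n+1}\mathrm{Sym}_{n,i-1}$ reassembles the right-hand side as $i!\,\mathrm{Sym}_{n+1,i}$, completing the induction. Conceptually this identity just records the fact that $\mathrm{Sym}_{n,i}$ is the $i$-th elementary symmetric polynomial in $d_{1},...,d_{n}$ and that, modulo the relations $d_{j}^{2}=0$, the only surviving monomials in $\left(  d_{1}+\cdots+d_{n}\right)  ^{i}$ are the squarefree ones, each appearing with the multinomial coefficient $i!$.

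I expect the main obstacle to be the bookkeeping in the inductive step rather than any deep issue: one must handle the auxiliary fact $\left(  a+d\right)  ^{i}=a^{i}+ia^{i-1}d$ for $d:D$ (itself a short induction on $i$ using $d^{2}=0$), keep the index shift between $\mathrm{Sym}_{n,i}$ and $\mathrm{Sym}_{n,i-1}$ aligned with the recursion, and treat the boundary values $i=0$ and $i>n$ correctly. None of these require the full multinomial theorem, so the whole argument stays inside elementary inductions that are comfortable to formalize in homotopy type theory; the $\mathbb{Q}$-algebra hypothesis enters only at the very end to license division by $i!$.
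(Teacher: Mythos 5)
Your proposal is correct and follows essentially the same route as the paper: both reduce the corollary to Theorem \ref{t2.1} via the key identity $i!\,\mathrm{Sym}_{n,i}\left(d_{1},...,d_{n}\right)=\left(\mathrm{Sym}_{n,1}\left(d_{1},...,d_{n}\right)\right)^{i}$, with the $\mathbb{Q}$-algebra hypothesis invoked only to divide by $i!$. The sole difference is that the paper merely asserts this identity as an observation, whereas you supply its inductive proof (on $n$, using $\mathrm{Sym}_{n+1,1}=\mathrm{Sym}_{n,1}+d_{n+1}$, the nilpotency $d_{n+1}^{2}=0$, and the defining recursion), which is a correct and welcome elaboration rather than a different approach.
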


\begin{proof}
This follows directly from the theorem simply by observing that
\[
i!\mathrm{Sym}_{n,i}\left(  d_{1},...,d_{n}\right)  =\left(  \mathrm{Sym}%
_{n,1}\left(  d_{1},...,d_{n}\right)  \right)  ^{i}\quad\left(  1\leq i\leq
m\right)
\]

\end{proof}

\begin{definition}
An $\mathbb{R}$-module $E$\ is called \underline{Euclidean} if it abides by
the following condition:
\[
\prod_{f::D\rightarrow E}\mathrm{isContr}\left(  \sum_{a:E}\prod_{d:D}f\left(
d\right)  =f\left(  0\right)  +ad\right)
\]

\end{definition}

Given $X:\mathcal{U}$ and an $\mathbb{R}$-module $E\left(  x\right)  $ for
each $x:X$, the type $\prod_{x:X}E\left(  x\right)  $ is naturally an
$\mathbb{R}$-module. It is easy to see that

\begin{proposition}
\label{p2.4}If the $\mathbb{R}$-module $E\left(  x\right)  $ is Euclidean for
each $x:X$, then the $\mathbb{R}$-module $\prod_{x:X}E\left(  x\right)  $\ is
also Euclidean.
\end{proposition}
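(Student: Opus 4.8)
The plan is to reduce Euclideanness of $\prod_{x:X}E\left(x\right)$ to that of the individual modules $E\left(x\right)$ by rewriting the relevant $\Sigma$-type as a dependent product over $x:X$ of the corresponding $\Sigma$-types, and then to invoke two standard facts of homotopy type theory: that a dependent product of contractible types is contractible, and that contractibility is preserved under equivalence. Concretely, I would fix $f:D\rightarrow\prod_{x:X}E\left(x\right)$ and, for each $x:X$, set $f_{x}:\equiv\lambda_{d:D}f\left(d\right)\left(x\right):D\rightarrow E\left(x\right)$. Since $E\left(x\right)$ is Euclidean, the type $\sum_{a:E\left(x\right)}\prod_{d:D}f_{x}\left(d\right)=f_{x}\left(0\right)+ad$ is contractible, and the goal is to show that
\[
G:\equiv\sum_{a:\prod_{x:X}E\left(x\right)}\prod_{d:D}f\left(d\right)=f\left(0\right)+ad
\]
is contractible.

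The heart of the argument is the chain of equivalences
\begin{align*}
G & \simeq\sum_{a:\prod_{x:X}E\left(x\right)}\prod_{d:D}\prod_{x:X}\left(f_{x}\left(d\right)=_{E\left(x\right)}f_{x}\left(0\right)+a\left(x\right)d\right)\\
& \simeq\sum_{a:\prod_{x:X}E\left(x\right)}\prod_{x:X}\prod_{d:D}\left(f_{x}\left(d\right)=_{E\left(x\right)}f_{x}\left(0\right)+a\left(x\right)d\right)\\
& \simeq\prod_{x:X}\sum_{a_{x}:E\left(x\right)}\prod_{d:D}\left(f_{x}\left(d\right)=_{E\left(x\right)}f_{x}\left(0\right)+a_{x}d\right).
\end{align*}
The first equivalence uses function extensionality together with the fact that the $\mathbb{R}$-module structure on $\prod_{x:X}E\left(x\right)$ is computed pointwise, so that an identification $f\left(d\right)=f\left(0\right)+ad$ in $\prod_{x:X}E\left(x\right)$ amounts to a family of identifications $f\left(d\right)\left(x\right)=f_{x}\left(0\right)+a\left(x\right)d$ in $E\left(x\right)$; the second is commutativity of dependent products; and the third is the type-theoretic axiom of choice, i.e.\ the distributivity of $\Pi$ over $\Sigma$ (Theorem 2.15.7 of \cite{vo}). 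Each factor of the final product is contractible by the Euclideanness of $E\left(x\right)$, a dependent product of contractible types is contractible (an immediate consequence of function extensionality), and contractibility transports along equivalences; hence $G$ is contractible, as desired.

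I expect the main obstacle to be the bookkeeping in the first equivalence: one must check that the pointwise operations make $\left(f\left(0\right)+ad\right)\left(x\right)$ agree with $f_{x}\left(0\right)+a\left(x\right)d$, and then assemble the resulting pointwise identifications into a single identification in the function type by function extensionality, all while carrying the outer $\Sigma$ over $a:\prod_{x:X}E\left(x\right)$ along unchanged. Once this identification of types is in place, the remaining steps are applications of named, off-the-shelf lemmas.
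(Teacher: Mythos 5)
Your proposal is correct and is essentially the paper's own argument spelled out in detail: the paper's proof is just the one-line citation of function extensionality and the principle of unique choice, and your chain of equivalences (pointwise funext, commuting the two dependent products, and the $\Pi$-$\Sigma$ distributivity of Theorem 2.15.7) together with preservation of contractibility under dependent products and equivalences is exactly the standard unpacking of those two ingredients. In particular, Theorem 2.15.7 plays here the role the paper assigns to unique choice, since with contractible fibers no propositional truncation is needed.
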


\begin{proof}
By the function extensionality axiom (Axiom 2.9.3 of \cite{vo})\ and the
principle of unique choice (\S 3.9 of \cite{vo}).
\end{proof}

\begin{notation}
Given an $\mathbb{R}$-module $E$, a Euclidean $\mathbb{R}$-module $F$ and
$f:E\rightarrow F$, we write
\[
f^{\prime}:E\rightarrow E\rightarrow F
\]
for one of the propositionally identical $f^{\prime}$ abiding by
\[
\prod_{x:E}\prod_{a:E}\prod_{d:D}f(x+ad)=f(x)+f^{\prime}(x,a)d
\]

\end{notation}

\begin{proposition}
\label{p2.5}Given an $\mathbb{R}$-module $E$, a Euclidean $\mathbb{R}$-module
$F$\ and $f:E\rightarrow F$, we have
\[
\prod_{x:E}\prod_{a:E}\prod_{b:E}f^{\prime}\left(  x,a+b\right)  =f^{\prime
}\left(  x,a\right)  +f^{\prime}\left(  x,b\right)
\]
and
\[
\prod_{x:E}\prod_{a:E}\prod_{r:\mathbb{R}}f^{\prime}\left(  x,ra\right)
=rf^{\prime}\left(  x,a\right)
\]
In other words,
\[
f^{\prime}\left(  x\right)  :E\rightarrow F
\]
is a homomorphism of $\mathbb{R}$-modules.
\end{proposition}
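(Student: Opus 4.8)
The plan is to prove both the additivity and homogeneity of $f'(x,-)$ by exploiting the defining property of $f'$ together with the Euclidean property of $F$, which guarantees \emph{uniqueness} of the linear coefficient appearing in an expansion over $D$. The strategy in each case is the same: expand $f$ along a suitable infinitesimal direction in two different ways, and then invoke the contractibility (hence uniqueness) clause in the definition of Euclidean to conclude that the two candidate coefficients must propositionally coincide.

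For additivity, I would fix $x,a,b:E$ and examine $f(x+(a+b)d)$ for $d:D$. On one hand, applying the defining equation of $f'$ directly with direction $a+b$ gives $f(x+(a+b)d)=f(x)+f'(x,a+b)d$. On the other hand, I would like to split the displacement $(a+b)d$ into two infinitesimal steps. The natural move is to write $f(x+(a+b)d)=f(x+ad+bd)$ and apply the expansion first in direction $a$ and then in direction $b$; since $d:D$ we have $d^2=0$, so the cross term that would ordinarily appear is killed, yielding $f(x)+f'(x,a)d+f'(x,b)d$. Matching the two expressions as functions of $d:D$ and appealing to uniqueness in the Euclidean condition for $F$ gives $f'(x,a+b)=f'(x,a)+f'(x,b)$.

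For homogeneity, I would fix $x,a:E$ and $r:\mathbb{R}$ and consider $f(x+(ra)d)$ for $d:D$. Direct expansion in direction $ra$ gives $f(x+(ra)d)=f(x)+f'(x,ra)d$. Alternatively, observe that $rd:D$ whenever $d:D$ (because $(rd)^2=r^2d^2=0$), so expanding in direction $a$ with the infinitesimal $rd$ gives $f(x+a(rd))=f(x)+f'(x,a)(rd)=f(x)+\bigl(rf'(x,a)\bigr)d$. Since $(ra)d=a(rd)$, the two expansions describe the same element of $F$ for every $d:D$, and uniqueness again forces $f'(x,ra)=rf'(x,a)$.

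The main obstacle is not the algebra but the careful bookkeeping of the uniqueness arguments in the type-theoretic setting. In each case I must package the two expansions as two inhabitants of the $\Sigma$-type $\sum_{c:F}\prod_{d:D}g(d)=g(0)+cd$ for the appropriate auxiliary map $g:D\rightarrow F$, and then use the fact that this type is contractible (the Euclidean hypothesis on $F$) to extract a propositional identification of the first projections. I would route this through the principle of unique choice, exactly as in the proof of Proposition~\ref{p2.1}, so that the two coefficients are identified as elements of $F$ rather than merely satisfying a common equation. One subtlety worth flagging is that for additivity the two-step expansion requires $f'(x,b)$ evaluated at the \emph{shifted} point $x+ad$; but since that shift is itself an $ad$-displacement with $d:D$, the correction to $f'(x+ad,b)$ relative to $f'(x,b)$ is again proportional to $d$ and hence contributes only at order $d^2=0$, so it may be discarded.
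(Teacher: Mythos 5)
Your proposal is correct and follows essentially the same route as the paper: for additivity, the paper likewise splits $x+(a+b)d$ as $(x+ad)+bd$, expands $f'(x+ad,b)$ as $f'(x,b)+\left(\lambda_{y:E}f'(y,b)\right)'(x,a)d$ (the precise form of your "correction proportional to $d$"), and kills the cross term with $d^2=0$; for homogeneity it uses exactly your $rd:D$ trick, with uniqueness of the coefficient supplied by the Euclidean hypothesis on $F$ in both cases.
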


\begin{proof}
Given $d:D$, we have
\begin{align*}
f\left(  x+\left(  a+b\right)  d\right)   &  =f\left(  \left(  x+ad\right)
+bd\right) \\
&  =f\left(  x+ad\right)  +f^{\prime}\left(  x+ad,b\right)  d\\
&  =f\left(  x\right)  +f^{\prime}\left(  x,a\right)  d+\left\{  f^{\prime
}\left(  x,b\right)  +\left(  \lambda_{y:E}f^{\prime}\left(  y,b\right)
\right)  ^{\prime}\left(  x,a\right)  d\right\}  d\\
&  =f\left(  x\right)  +\left(  f^{\prime}\left(  x,a\right)  +f^{\prime
}\left(  x,b\right)  \right)  d+\left(  \lambda_{y:E}f^{\prime}\left(
y,b\right)  \right)  ^{\prime}\left(  x,a\right)  d^{2}\\
&  =f\left(  x\right)  +\left(  f^{\prime}\left(  x,a\right)  +f^{\prime
}\left(  x,b\right)  \right)  d\\
&  \text{[since }d^{2}\text{ vanishes]}%
\end{align*}
while we have
\begin{align*}
f\left(  x+\left(  ra\right)  d\right)   &  =f\left(  x+a\left(  rd\right)
\right) \\
&  =f(x)+f^{\prime}\left(  x,a\right)  \left(  rd\right) \\
&  \text{[since }rd:D\text{]}\\
&  =f(x)+\left(  rf^{\prime}\left(  x,a\right)  \right)  d
\end{align*}
so that the desired conclusion follows.
\end{proof}

\begin{notation}
Given an $\mathbb{R}$-module $E$, a Euclidean $\mathbb{R}$-module $F$\ and
$f:E\rightarrow F$, we have
\[
f^{\prime}:E\rightarrow E\rightarrow F
\]
Since the $\mathbb{R}$-module $E\rightarrow F$\ is Euclidean by Proposition ,
we have
\[
\left(  f^{\prime}\right)  ^{\prime}:E\rightarrow E\rightarrow E\rightarrow F
\]
We will often write $f^{\prime\prime}$ in place of $\left(  f^{\prime}\right)
^{\prime}$.
\end{notation}

It is easy to see that

\begin{proposition}
\label{p2.6}Given an $\mathbb{R}$-module $E$, a Euclidean $\mathbb{R}$-module
$F$\ and $f:E\rightarrow F$, we have
\[
\prod_{x:E}\prod_{a_{1},a_{2}:E}\prod_{b:E}f^{\prime\prime}\left(
x,a_{1}+a_{2},b\right)  =f^{\prime\prime}\left(  x,a_{1},b\right)
+f^{\prime\prime}\left(  x,a_{2},b\right)
\]%
\[
\prod_{x:E}\prod_{a:E}\prod_{b_{1},b_{2}:E}f^{\prime\prime}\left(
x,a,b_{1}+b_{2}\right)  =f^{\prime\prime}\left(  x,a,b_{1}\right)
+f^{\prime\prime}\left(  x,a,b_{2}\right)
\]%
\[
\prod_{x:E}\prod_{a:E}\prod_{b:E}\prod_{r:\mathbb{R}}f^{\prime\prime}\left(
x,ra,b\right)  =rf^{\prime\prime}\left(  x,a,b\right)
\]
and
\[
\prod_{x:E}\prod_{a:E}\prod_{b:E}\prod_{r:\mathbb{R}}f^{\prime\prime}\left(
x,a,rb\right)  =rf^{\prime\prime}\left(  x,a,b\right)
\]
In short, $f^{\prime\prime}\left(  x\right)  $\ is bilinear.
\end{proposition}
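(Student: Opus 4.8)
The plan is to split the four displayed equations into two groups according to which argument slot is being tested. The first and third equations concern the \emph{middle} slot, which is exactly the direction along which the outer derivative is taken, so I expect them to follow immediately from Proposition \ref{p2.5} applied to $f^{\prime}$ in place of $f$. The second and fourth equations concern the \emph{last} slot, which is the direction fed to the inner map $f^{\prime}$ rather than a genuine direction of differentiation; these will require a short direct computation with the defining relation of $f^{\prime\prime}$.

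For the first and third equations, I would first observe that the $\mathbb{R}$-module $E\rightarrow F$ is Euclidean by Proposition \ref{p2.4}, so that the abbreviation $f^{\prime\prime}:\equiv\left(f^{\prime}\right)^{\prime}$ is legitimate and Proposition \ref{p2.5} applies verbatim to the map $f^{\prime}:E\rightarrow\left(E\rightarrow F\right)$. That proposition then asserts that $\left(f^{\prime}\right)^{\prime}\left(x\right):E\rightarrow\left(E\rightarrow F\right)$ is a homomorphism of $\mathbb{R}$-modules, i.e.
\[
\left(f^{\prime}\right)^{\prime}\left(x,a_{1}+a_{2}\right)=\left(f^{\prime}\right)^{\prime}\left(x,a_{1}\right)+\left(f^{\prime}\right)^{\prime}\left(x,a_{2}\right),\qquad\left(f^{\prime}\right)^{\prime}\left(x,ra\right)=r\left(f^{\prime}\right)^{\prime}\left(x,a\right),
\]
these being identities in $E\rightarrow F$. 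Applying both sides to $b:E$ yields precisely the first and third asserted equations, since $\left(f^{\prime}\right)^{\prime}\left(x,a\right)\left(b\right)=f^{\prime\prime}\left(x,a,b\right)$ by our convention.

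For the second and fourth equations, I would work from the definition
\[
f^{\prime}\left(x+ad,b\right)=f^{\prime}\left(x,b\right)+f^{\prime\prime}\left(x,a,b\right)d\qquad\left(d:D\right),
\]
which exhibits $f^{\prime\prime}$ as the derivative of $y\mapsto f^{\prime}\left(y,b\right)$ in the direction $a$. Applying Proposition \ref{p2.5} to $f$ itself shows that $f^{\prime}\left(y,-\right)$ is linear, so on the one hand $f^{\prime}\left(x+ad,b_{1}+b_{2}\right)=f^{\prime}\left(x,b_{1}+b_{2}\right)+f^{\prime\prime}\left(x,a,b_{1}+b_{2}\right)d$, while on the other hand expanding $f^{\prime}\left(x+ad,b_{1}+b_{2}\right)=f^{\prime}\left(x+ad,b_{1}\right)+f^{\prime}\left(x+ad,b_{2}\right)$ and substituting the definition of $f^{\prime\prime}$ in each summand gives $f^{\prime}\left(x,b_{1}+b_{2}\right)+\bigl(f^{\prime\prime}\left(x,a,b_{1}\right)+f^{\prime\prime}\left(x,a,b_{2}\right)\bigr)d$. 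The homogeneity case is identical with $b_{1}+b_{2}$ replaced by $rb$ and using $f^{\prime}\left(x+ad,rb\right)=rf^{\prime}\left(x+ad,b\right)$.

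The only real work lies in the last step: matching the two expansions forces equality of the coefficients of $d$, and this is legitimate precisely because $F$ is Euclidean. Concretely, both computations present the function $\lambda_{d:D}f^{\prime}\left(x+ad,b_{1}+b_{2}\right):D\rightarrow F$, whose value at $d=0$ is $f^{\prime}\left(x,b_{1}+b_{2}\right)$, in the Euclidean form $f^{\prime}\left(x,b_{1}+b_{2}\right)+\left(\text{coefficient}\right)d$; the contractibility clause in the Euclidean condition guarantees that this coefficient is unique, whence $f^{\prime\prime}\left(x,a,b_{1}+b_{2}\right)=f^{\prime\prime}\left(x,a,b_{1}\right)+f^{\prime\prime}\left(x,a,b_{2}\right)$ and likewise $f^{\prime\prime}\left(x,a,rb\right)=rf^{\prime\prime}\left(x,a,b\right)$. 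I expect the bookkeeping of which slot is a differentiation direction and which is a mere evaluation point, rather than any genuinely hard estimate, to be the main source of confusion.
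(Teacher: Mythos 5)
Your proof is correct and takes essentially the same approach as the paper: the paper's entire proof reads ``By Proposition \ref{p2.5},'' and your argument is exactly the natural expansion of that citation --- Proposition \ref{p2.5} applied to $f^{\prime}$ (legitimate since $E\rightarrow F$ is Euclidean by Proposition \ref{p2.4}) yields the middle-slot identities, while Proposition \ref{p2.5} applied to $f$ itself, combined with uniqueness of the derivative coefficient guaranteed by the Euclidean condition on $F$, yields the last-slot identities. The only difference is that you spell out the coefficient-matching computation that the paper leaves implicit.
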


\begin{proof}
By Proposition \ref{p2.5}.
\end{proof}

We can say more.

\begin{proposition}
Given an $\mathbb{R}$-module $E$, a Euclidean $\mathbb{R}$-module $F$\ and
$f:E\rightarrow F$, we have
\[
\prod_{x:E}\prod_{a:E}\prod_{b:E}f^{\prime\prime}\left(  x,a,b\right)
=f^{\prime\prime}\left(  x,b,a\right)
\]

\end{proposition}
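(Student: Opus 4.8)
The plan is to probe $f$ with two \emph{independent} infinitesimals and to compute the value of $f$ along a mixed infinitesimal displacement in two different orders, reading off the mixed second-order coefficient each time. Concretely, I would fix $x,a,b:E$, take $d_{1},d_{2}:D$, and expand $f\bigl(x+ad_{1}+bd_{2}\bigr)$. The only ingredient needed beyond the defining equation of $f^{\prime}$ is the first-order behaviour of $f^{\prime}$ in its point-argument, namely
\[
f^{\prime}(x+vd,c)=f^{\prime}(x,c)+f^{\prime\prime}(x,v,c)\,d
\]
for $v,c:E$ and $d:D$; this is just the defining equation of $(f^{\prime})^{\prime}=f^{\prime\prime}$ evaluated at $c$ (recall $f^{\prime}$ is regarded as a map $E\to(E\to F)$ into the Euclidean $\mathbb{R}$-module $E\to F$).

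First I would group the displacement as $(x+bd_{2})+ad_{1}$ and apply the defining equation of $f^{\prime}$ to the outer $d_{1}$, then expand the inner $d_{2}$, obtaining
\[
f(x+ad_{1}+bd_{2})=f(x)+f^{\prime}(x,a)d_{1}+f^{\prime}(x,b)d_{2}+f^{\prime\prime}(x,b,a)\,d_{1}d_{2}.
\]
Next I would group the same displacement as $(x+ad_{1})+bd_{2}$ and expand in the opposite order, which yields the identical expression except with $f^{\prime\prime}(x,a,b)$ in the final slot. Since $E$-addition is commutative the two left-hand sides agree, and the constant, the $d_{1}$- and the $d_{2}$-coefficients match verbatim, so subtracting leaves
\[
f^{\prime\prime}(x,a,b)\,d_{1}d_{2}=f^{\prime\prime}(x,b,a)\,d_{1}d_{2}
\]
for all $d_{1},d_{2}:D$.

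The main obstacle is the final step, cancelling $d_{1}d_{2}$: one cannot divide by a nilpotent, so I would instead invoke the Euclidean property of $F$ twice. Writing $c:\equiv f^{\prime\prime}(x,a,b)-f^{\prime\prime}(x,b,a)$, the identity above says $(cd_{2})\,d_{1}=0$ for every $d_{1}:D$; uniqueness of the linear coefficient of $\lambda_{d_{1}:D}(cd_{2})d_{1}$ (the $\mathrm{isContr}$ clause in the definition of Euclidean, comparing with the zero representation) forces $cd_{2}=0$ for every $d_{2}:D$, and a second application to $\lambda_{d_{2}:D}\,cd_{2}$ forces $c=0$. This is precisely the two-variable Kock--Lawvere cancellation, derived here from the single-variable Euclidean hypothesis rather than assumed outright; it is the only genuinely non-formal point, everything preceding it being bookkeeping of the defining equations of $f^{\prime}$ and $f^{\prime\prime}$.
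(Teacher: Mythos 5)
Your proposal is correct and takes essentially the same route as the paper: both arguments extract the coefficient of $d_{1}d_{2}$ in $f(x+ad_{1}+bd_{2})$ by expanding in the two possible orders, using the defining equation of $f^{\prime\prime}=(f^{\prime})^{\prime}$ in the point-argument of $f^{\prime}$. The only differences are presentational --- the paper organizes the computation as a second difference $f(x+ad_{1}+bd_{2})-f(x+ad_{1})-f(x+bd_{2})+f(x)$ where you expand fully and match coefficients, and you additionally spell out the final cancellation of $d_{1}d_{2}$ by two applications of the Euclidean uniqueness clause, a step the paper leaves implicit.
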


\begin{proof}
Given $d_{1},d_{2}:D$, we compute
\begin{align*}
&  f(x+ad_{1}+bd_{2})-f(x+ad_{1})-f(x+bd_{2})+f(x)\\
&  =f(x+ad_{1}+bd_{2})-f(x+bd_{2})-f(x+ad_{1})+f(x)
\end{align*}
in two different ways. On the one hand, we have
\begin{align*}
&  f(x+ad_{1}+bd_{2})-f(x+ad_{1})-f(x+bd_{2})+f(x)\\
&  =\left(  f(x+ad_{1}+bd_{2})-f(x+ad_{1})\right)  -\left(  f(x+bd_{2}%
)-f(x)\right) \\
&  =f^{\prime}\left(  x+ad_{1},b\right)  d_{2}-f^{\prime}\left(  x,b\right)
d_{2}\\
&  =\left(  f^{\prime}\left(  x+ad_{1}\right)  -f^{\prime}\left(  x\right)
\right)  \left(  b\right)  d_{2}\\
&  =\left(  f^{\prime\prime}\left(  x,a\right)  d_{1}\right)  \left(
b\right)  d_{2}\\
&  =f^{\prime\prime}\left(  x,a,b\right)  d_{1}d_{2}%
\end{align*}
On the other hand, we have
\begin{align*}
&  f(x+ad_{1}+bd_{2})-f(x+bd_{2})-f(x+ad_{1})+f(x)\\
&  =\left(  f(x+ad_{1}+bd_{2})-f(x+bd_{2})\right)  -\left(  f(x+ad_{1}%
)-f(x)\right) \\
&  =f(x+bd_{2},a)d_{1}-f(x,a)d_{1}\\
&  =\left(  f^{\prime}\left(  x+bd_{2}\right)  -f^{\prime}\left(  x\right)
\right)  \left(  a\right)  d_{1}\\
&  =\left(  f^{\prime\prime}\left(  x,b\right)  d_{2}\right)  \left(
a\right)  d_{1}\\
&  =f^{\prime\prime}\left(  x,b,a\right)  d_{1}d_{2}%
\end{align*}
Therefore the desired conclusion follows.
\end{proof}

\section{\label{s3}Microlinearity}

\begin{definition}
The diagram of small objects resulting from a limit diagram of Weil algebras
by application of the contravariant functor%
\[
\mathrm{Spec}_{\mathbb{R}}%
\]
is called a \underline{quasi-colimit diagram of small objects}. Therefore, by
Axiom \ref{a3}, a diagram $\mathcal{D}$\ of small objects is a quasi-colimit
diagram iff the exponentiation $\mathcal{D}\rightarrow\mathbb{R}$ of the
diagram $\mathcal{D}$\ over the type $\mathbb{R}$\ is a limit diagram.
\end{definition}

\begin{definition}
A type $M$\ is called \underline{microlinear} provided that the exponentiation
$\mathcal{D}\rightarrow\left\Vert M\right\Vert _{0}$ of any quasi-colimit
diagram $\mathcal{D}$ of small objects\ over the set truncation $\left\Vert
M\right\Vert _{0}$\ of the type $M$\ is a limit diagram.of types.
\end{definition}

It is easy to see that

\begin{proposition}
\label{p3.1}(cf. Proposition 1 of \S 2.3 in \cite{la}) We have the following:

\begin{enumerate}
\item A type $M$\ is microlinear iff its set truncation $\left\Vert
M\right\Vert _{0}$ is so.

\item The type $\mathbb{R}$\ is microlinear.

\item If $M$ is a microlinear set and $X$ is an arbitrary type, then
$X\rightarrow M$ is a microlinear set.

\item If $M$ is the limit of a diagram $\mathcal{M}$\ of microlinear sets,
then $M$ is a microlinear set.
\end{enumerate}
\end{proposition}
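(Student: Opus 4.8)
The plan is to reduce all four clauses to two structural facts: first, that microlinearity is a property of the set truncation alone, which is already built into the definition; and second, that exponentiation by a fixed type or small object preserves limits, together with the commutation of limits with limits. Clause 1 is then immediate, for since $\left\Vert M\right\Vert_{0}$ is already a set its own set truncation $\left\Vert \left\Vert M\right\Vert_{0}\right\Vert_{0}$ is equivalent to $\left\Vert M\right\Vert_{0}$, so the requirement that $\mathcal{D}\rightarrow\left\Vert M\right\Vert_{0}$ be a limit diagram for every quasi-colimit diagram $\mathcal{D}$ is literally the same condition for $M$ and for $\left\Vert M\right\Vert_{0}$. Clause 2 is equally short: by Axiom \ref{a1.0} the type $\mathbb{R}$ is a set, so $\left\Vert\mathbb{R}\right\Vert_{0}\simeq\mathbb{R}$, and by the very definition of a quasi-colimit diagram of small objects (via the Remark following Axiom \ref{a1.1}) the exponentiation $\mathcal{D}\rightarrow\mathbb{R}$ of such a diagram is a limit diagram; hence $\mathbb{R}$ is microlinear tautologically.

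For clause 3, I would first observe that $X\rightarrow M$ is a set because $M$ is, so $\left\Vert X\rightarrow M\right\Vert_{0}\simeq X\rightarrow M$ and it suffices to test limit diagrams directly. Given a quasi-colimit diagram $\mathcal{D}$ with objects $D_{i}$, I would use the currying equivalence $\bigl(D_{i}\rightarrow(X\rightarrow M)\bigr)\simeq\bigl(X\rightarrow(D_{i}\rightarrow M)\bigr)$, natural in $i$ by function extensionality, to identify the cone $\mathcal{D}\rightarrow(X\rightarrow M)$ with the image under the functor $X\rightarrow(-)$ of the cone $\mathcal{D}\rightarrow M$. Since $M$ is microlinear the latter is a limit cone, and since $X\rightarrow(-)$ preserves limits (it is right adjoint to $X\times(-)$, so limits in $X\rightarrow(-)$ are computed pointwise), its image is again a limit cone. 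Transporting back along the natural equivalence shows that $\mathcal{D}\rightarrow(X\rightarrow M)$ is a limit diagram. This is the same mechanism already exploited for Euclidean modules in Proposition \ref{p2.4}.

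For clause 4, write $M\simeq\lim_{j}M_{j}$ for the limit of the diagram $\mathcal{M}$ of microlinear sets; as a limit of sets, $M$ is a set. Fixing a quasi-colimit diagram $\mathcal{D}$, I would again invoke the fact that exponentiation $\mathcal{D}\rightarrow(-)$ preserves limits to obtain a natural equivalence $\bigl(\mathcal{D}\rightarrow M\bigr)\simeq\lim_{j}\bigl(\mathcal{D}\rightarrow M_{j}\bigr)$ of cones over the indexing shape of $\mathcal{D}$. For each $j$ the cone $\mathcal{D}\rightarrow M_{j}$ is a limit cone by microlinearity of $M_{j}$, and since limits commute with limits the cone $\lim_{j}\bigl(\mathcal{D}\rightarrow M_{j}\bigr)$ is itself a limit cone. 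Hence $\mathcal{D}\rightarrow M$ is a limit diagram and $M$ is microlinear.

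I expect the main obstacle to be bookkeeping rather than mathematical content: making precise, in the homotopy-type-theoretic sense, what a ``limit diagram'' is for these (co)limit-shaped diagrams, and checking that the currying and limit-exchange equivalences are coherent enough to carry the limit-cone property across, i.e.\ that they are genuine natural transformations of cones and not merely fibrewise equivalences. Granting function extensionality, the underlying categorical input, that $X\rightarrow(-)$ and $D\rightarrow(-)$ are limit-preserving and that limits commute with limits, is standard; the real work is to package these equivalences as maps of diagrams that respect the quasi-colimit structure inherited from the defining limit diagram of Weil algebras.
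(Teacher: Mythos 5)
Your proposal is correct and follows essentially the same route as the paper: clause 1 from the definition via set truncation, clause 2 from the axiom defining quasi-colimit diagrams, clause 3 by the currying equivalence $\mathcal{D}\rightarrow(X\rightarrow M)\simeq X\rightarrow(\mathcal{D}\rightarrow M)$ together with limit preservation of $X\rightarrow(-)$, and clause 4 by commutation of limits with limits. Your version merely spells out details the paper leaves implicit, such as the set-ness of $X\rightarrow M$ and of the limit, and the naturality of the currying equivalence.
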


\begin{proof}
The first statement follows directly from the very definition of
microlinearity. The second statement follows from the axiom. Let $\mathcal{D}%
$\ be a quasi-colimit diagram of small objects. For the third statement, we
note that the diagram%
\[
\mathcal{D}\rightarrow X\rightarrow M
\]
is equivalent to the diagram%
\[
X\rightarrow\mathcal{D}\rightarrow M
\]
which is a limit diagram because of the assumption that $\mathcal{D}%
\rightarrow M$ is a limit diagram. For the fourth statement, we note that the
diagram%
\[
\mathcal{D}\rightarrow\mathcal{M}%
\]
is a limit diagram of diagrams of types over the diagram $\mathcal{M}$\ so
that the diagram%
\[
\mathcal{D}\rightarrow M
\]
is a limit diagram, because, roughly speaking, double limits commute.
\end{proof}

\section{\label{s4}Tangency}

\begin{notation}
Given a microlinear type $M$\ and $x:M$, the type $\mathbf{T}_{x}M$\ of
\underline{tangent vectors to $M$ at $x$} stands for the subtype%
\[
\left\{  t:D\rightarrow\left\Vert M\right\Vert _{0}\mid t\left(  0\right)
=\left\vert x\right\vert _{0}\right\}
\]
of the type%
\[
D\rightarrow\left\Vert M\right\Vert _{0}%
\]

\end{notation}

We recall that.

\begin{lemma}
\label{l4.1}(cf. Proposition 6 of \S 2.2 in \cite{la}) The following diagram
is a quasi-colimit diagram:
\[%
\begin{array}
[c]{ccc}%
1 & \rightarrow & D\\
\downarrow &  & \downarrow\lambda_{d:D}\left(  0,d\right) \\
D & \overrightarrow{\lambda_{d:D}\left(  d,0\right)  } & D(2)
\end{array}
\]

\end{lemma}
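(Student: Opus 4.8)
The plan is to reduce the statement to a purely algebraic pullback computation at the level of Weil algebras, exploiting the Remark following Axiom~\ref{a1.1}. First I would identify each vertex with the spectrum of a Weil algebra: the terminal type $1$ is $\mathrm{Spec}_{\mathbb{R}}\mathbb{R}$ (an $\mathbb{R}$-algebra homomorphism $\mathbb{R}\to\mathbb{R}$ is forced to be the identity, so the type of them is contractible), $D\simeq\mathrm{Spec}_{\mathbb{R}}\mathbb{R}[X]/(X^{2})$, and $D(2)\simeq\mathrm{Spec}_{\mathbb{R}}\mathbb{R}[X,Y]/(X^{2},Y^{2},XY)$. Next I would recognise the four arrows as images under the contravariant functor $\mathrm{Spec}_{\mathbb{R}}$ of algebra homomorphisms: the two maps out of $1$ are the augmentations $\varepsilon\colon\mathbb{R}[X]/(X^{2})\to\mathbb{R}$ sending $X\mapsto 0$, the map $\lambda_{d:D}(d,0)$ is $\mathrm{Spec}_{\mathbb{R}}$ of the homomorphism $X\mapsto X,\,Y\mapsto 0$, and $\lambda_{d:D}(0,d)$ is $\mathrm{Spec}_{\mathbb{R}}$ of $X\mapsto 0,\,Y\mapsto X$.

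By the definition of a quasi-colimit diagram together with that Remark, it then suffices to check that the reversed square of Weil algebras, with $\mathbb{R}[X,Y]/(X^{2},Y^{2},XY)$ in the corner mapping to the cospan $\mathbb{R}[X]/(X^{2})\xrightarrow{\varepsilon}\mathbb{R}\xleftarrow{\varepsilon}\mathbb{R}[X]/(X^{2})$, is a pullback (a finite limit) in the category of $\mathbb{R}$-algebras. Commutativity is immediate: both composites down to $\mathbb{R}$ are the total augmentation $X,Y\mapsto 0$, matching the fact that either leg $1\to D\to D(2)$ picks out the basepoint $0:D(2)$. For the universal property I would compute the fibre product $P:=\mathbb{R}[X]/(X^{2})\times_{\mathbb{R}}\mathbb{R}[X]/(X^{2})$ explicitly: its elements are pairs $(a+bX,\,a+cX)$, so $P$ is free of rank $3$ over $\mathbb{R}$ on $(1,1)$, $u:=(X,0)$ and $v:=(0,X)$, and componentwise multiplication gives $u^{2}=v^{2}=uv=0$. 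The comparison homomorphism sends $X\mapsto u$ and $Y\mapsto v$; I would exhibit its inverse directly, so that it is an equivalence of the underlying sets respecting the algebra structure. The defining relations $X^{2},Y^{2},XY$ of the algebra of $D(2)$ are precisely the relations satisfied by $u,v$, which is exactly what makes the map a bijection.

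The main obstacle I anticipate is bookkeeping at the level of the higher-inductive presentation of finitely presented $\mathbb{R}$-algebras: since these algebras are sets defined by higher induction, I must ensure the comparison map is a genuine equivalence rather than merely a set-theoretic bijection, which means checking it is invertible as a map of the quotiented presentations and verifying it is an $\mathbb{R}$-algebra homomorphism on generators and relations. Once the rank-$3$ bases are matched and the three monomial relations are seen to coincide, universality of $P$ is routine; the remaining care is simply to transport the pullback of Weil algebras back through the Kock--Lawvere equivalence $\mathfrak{W}\simeq(\mathrm{Spec}_{\mathbb{R}}\mathfrak{W}\to\mathbb{R})$ to conclude that the original square of infinitesimal types is a quasi-colimit diagram.
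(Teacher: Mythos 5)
Your proof is correct, but there is nothing in the paper to compare it against: the paper supplies no proof of Lemma \ref{l4.1}, which is merely recalled with a citation to Proposition 6 of \S 2.2 of \cite{la}. Your argument --- identify the three vertices as $\mathrm{Spec}_{\mathbb{R}}$ of $\mathbb{R}$, $\mathbb{R}[X]/(X^{2})$ and $\mathbb{R}[X,Y]/(X^{2},Y^{2},XY)$, identify the four arrows as $\mathrm{Spec}_{\mathbb{R}}$ of algebra homomorphisms, and check that $\mathbb{R}[X,Y]/(X^{2},Y^{2},XY)$ is precisely the fibre product $\mathbb{R}[X]/(X^{2})\times_{\mathbb{R}}\mathbb{R}[X]/(X^{2})$ via the rank-$3$ computation $u^{2}=v^{2}=uv=0$ --- is exactly what the paper's definition of a quasi-colimit diagram demands, and it is consistent in style with the proofs the paper does supply for its own quasi-colimit lemmas (Lemmas \ref{l5.2}, \ref{l5.3}, \ref{l5.5}): there the author uses Axiom \ref{a1.1} to expand $\mathbb{R}$-valued maps on infinitesimal types as polynomials and checks coefficient conditions, i.e., verifies that the $(\rightarrow\mathbb{R})$-image of the diagram is a limit, which for your square is literally the same rank-$3$ bookkeeping phrased through the Remark instead of through the definition. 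Two cosmetic points. First, you announce that you will ``exploit the Remark'' following Axiom \ref{a1.1}, but your actual argument proceeds through the definition alone (exhibit a pullback of Weil algebras and apply $\mathrm{Spec}_{\mathbb{R}}$); the Remark, and with it your closing sentence about transporting back through the Kock--Lawvere equivalence, is dispensable. Second, your worry about the higher-inductive presentation is easily discharged: by Axiom \ref{a1.0} the type $\mathbb{R}$ is a set, every algebra in sight is a free $\mathbb{R}$-module of finite rank and hence a set, and a bijective homomorphism between sets is an equivalence, so matching the bases and the three monomial relations, as you do, finishes the proof.
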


\begin{corollary}
\label{cl4.1}Let $M$\ be a microlinear set with $x:M$. Given $t_{1}%
,t_{2}:D\rightarrow M$ with $t_{1}\left(  0\right)  =t_{2}\left(  0\right)
=x$, there exists $\mathfrak{l}_{\left(  t_{1},t_{2}\right)  }:D(2)\rightarrow
M$ such that
\begin{align*}
\mathfrak{l}_{\left(  t_{1},t_{2}\right)  }\circ\left(  \lambda_{d:D}\left(
d,0\right)  \right)   &  =t_{1}\\
\mathfrak{l}_{\left(  t_{1},t_{2}\right)  }\circ\left(  \lambda_{d:D}\left(
0,d\right)  \right)   &  =t_{2}%
\end{align*}

\end{corollary}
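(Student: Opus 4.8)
The plan is to obtain the corollary as an immediate consequence of microlinearity applied to the quasi-colimit square of Lemma \ref{l4.1}. First I would observe that, since $M$ is a set, the unit $\lambda_{x:M}\left\vert x\right\vert _{0}:M\rightarrow\left\Vert M\right\Vert _{0}$ is an equivalence, so that the hypothesis ``$M$ is microlinear'' says precisely that exponentiating any quasi-colimit diagram of small objects into $M$ (rather than into $\left\Vert M\right\Vert _{0}$) yields a limit diagram. Specializing this to the square of Lemma \ref{l4.1} is the whole content of the argument.

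Next I would unwind what the exponentiated square actually is. Applying the contravariant functor $-\rightarrow M$ turns the pushout square of Lemma \ref{l4.1} into a pullback square, whose legs are precomposition with $\lambda_{d:D}\left(d,0\right)$ and with $\lambda_{d:D}\left(0,d\right)$, and whose bottom-right corner is $\left(1\rightarrow M\right)\simeq M$. Since the map $1\rightarrow D$ in Lemma \ref{l4.1} picks out $0:D$, both of the remaining legs $\left(D\rightarrow M\right)\rightarrow M$ become evaluation at $0$. Thus microlinearity delivers an equivalence
\[
\left(D\left(2\right)\rightarrow M\right)\simeq\left(D\rightarrow M\right)\times_{M}\left(D\rightarrow M\right)
\]
in which the first factor corresponds to precomposition with $\lambda_{d:D}\left(d,0\right)$, the second to precomposition with $\lambda_{d:D}\left(0,d\right)$, and the fibre product is taken over the two evaluation-at-$0$ maps.

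Finally I would feed the given data into this equivalence. The pair $\left(t_{1},t_{2}\right)$ together with a proof of $t_{1}\left(0\right)=t_{2}\left(0\right)$ (which we have, both sides being $x$) is exactly an element of the right-hand side $\left(D\rightarrow M\right)\times_{M}\left(D\rightarrow M\right)$; transporting it back along the equivalence produces the desired $\mathfrak{l}_{\left(t_{1},t_{2}\right)}:D\left(2\right)\rightarrow M$. The two stated equations are then nothing but the computation rules of the equivalence, i.e.\ the triangle identities asserting that the legs of the pullback cone recover the two chosen factors. I expect the only real obstacle to be the bookkeeping in the second step: one must carefully check that exponentiating the specific maps of Lemma \ref{l4.1} identifies the two legs as evaluation at $0$ and matches the factor through $\lambda_{d:D}\left(d,0\right)$ with $t_{1}$ and the factor through $\lambda_{d:D}\left(0,d\right)$ with $t_{2}$, so that the equalities come out in the orientation stated rather than swapped.
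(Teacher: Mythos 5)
Your proposal is correct and is exactly the argument the paper intends: the paper states this as an immediate corollary of Lemma \ref{l4.1}, since microlinearity (together with $M\backsimeq\left\Vert M\right\Vert _{0}$ for a set $M$) turns that quasi-colimit square into a pullback $\left(D\left(2\right)\rightarrow M\right)\backsimeq\left(D\rightarrow M\right)\times_{M}\left(D\rightarrow M\right)$ over the evaluation-at-$0$ maps, and $\left(t_{1},t_{2}\right)$ with $t_{1}\left(0\right)=t_{2}\left(0\right)=x$ is precisely an element of that pullback. Your careful bookkeeping about which leg corresponds to $\lambda_{d:D}\left(d,0\right)$ versus $\lambda_{d:D}\left(0,d\right)$ is the only content of the proof, and you have it right.
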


The above lemma has the following variant.

\begin{lemma}
\label{l4.2}The following diagram is a quasi-colimit diagram:%
\[%
\begin{array}
[c]{ccccc}
&  & 1 &  & \\
& \swarrow & \downarrow & \searrow & \\
D &  & D &  & D\\
& \searrow & \downarrow & \swarrow & \\
&  & D\left(  3\right)  &  &
\end{array}
\]
where the lower three arrows stand from left to right for%
\begin{align*}
&  \lambda_{d:D}\left(  d,0,0\right) \\
&  \lambda_{d:D}\left(  0,d,0\right) \\
&  \lambda_{d:D}\left(  0,0,d\right)
\end{align*}
respectively.
\end{lemma}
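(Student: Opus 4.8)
The plan is to invoke the Remark following Axiom \ref{a1.1}, which reduces the claim that the displayed diagram is a quasi-colimit of small objects to the claim that the diagram obtained by applying the contravariant functor $\rightarrow\mathbb{R}$ is a limit diagram of types. Concretely, $1$ turns into $\mathbb{R}$ (as $1\rightarrow\mathbb{R}$), each copy of $D$ turns into $D\rightarrow\mathbb{R}$, and $D(3)$ turns into $D(3)\rightarrow\mathbb{R}$, with the three lower arrows dualizing to the restriction maps $\lambda_{h:D(3)\rightarrow\mathbb{R}}\lambda_{d:D}h(d,0,0)$, $\lambda_{h}\lambda_{d}h(0,d,0)$, $\lambda_{h}\lambda_{d}h(0,0,d)$, and the three upper arrows dualizing to the single evaluation $\mathrm{ev}_0:\equiv\lambda_{g:D\rightarrow\mathbb{R}}g(0)$. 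Thus it suffices to exhibit $D(3)\rightarrow\mathbb{R}$, equipped with these three restrictions, as the wide pullback of the three copies of $\mathrm{ev}_0:(D\rightarrow\mathbb{R})\rightarrow\mathbb{R}$.

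First I would use the Homotopical Generalized Kock--Lawvere Axiom \ref{a1.1} to pin down both ends explicitly. Since $D(3)=\mathrm{Spec}_{\mathbb{R}}\,\mathbb{R}[X,Y,Z]/(X^2,Y^2,Z^2,XY,YZ,ZX)$ and the displayed Weil algebra is free of rank $4$ on $1,X,Y,Z$, the axiom gives that every $h:D(3)\rightarrow\mathbb{R}$ has the unique form $h(d_1,d_2,d_3)=a+a_1d_1+a_2d_2+a_3d_3$, and similarly every $g:D\rightarrow\mathbb{R}$ is uniquely $g(d)=c+bd$. Under these normal forms the three restrictions send such an $h$ to $d\mapsto a+a_1d$, $d\mapsto a+a_2d$, $d\mapsto a+a_3d$, whose common value at $0$ is $a$, so the restriction map does land in the wide pullback.

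Next I would verify the universal property directly. A cone into the three copies of $D\rightarrow\mathbb{R}$ agreeing over $\mathbb{R}$ is a triple $(g_1,g_2,g_3)$ with $g_i(d)=c_i+b_id$ and $c_1=c_2=c_3=:c$; the assignment sending $(g_1,g_2,g_3)$ to $\lambda_{(d_1,d_2,d_3):D(3)}(c+b_1d_1+b_2d_2+b_3d_3)$ is then a two-sided inverse to the restriction map, the relevant identities holding because $d_id_j=0$ for $i\neq j$ on $D(3)$ kills every mixed term. Unique choice and function extensionality, exactly as deployed in Propositions \ref{p2.1} and \ref{p2.4}, promote this pointwise bijection to an equivalence of types, exhibiting $D(3)\rightarrow\mathbb{R}$ as the desired limit and hence the original diagram as a quasi-colimit.

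The main obstacle is bookkeeping rather than depth: one must reverse every arrow correctly under the contravariant functor and confirm that the off-diagonal relations defining $D(3)$ are precisely those forced by the wide pullback --- no more and no fewer --- so that the candidate inverse genuinely lands in $D(3)\rightarrow\mathbb{R}$ and not in some larger exponential. Equivalently, in the dual algebraic picture this is the statement that $\mathbb{R}[X,Y,Z]/(X^2,Y^2,Z^2,XY,YZ,ZX)$ is the pullback of three copies of $\mathbb{R}[X]/(X^2)$ along their augmentations, checked by matching the rank-$4$ bases. The entire argument is the evident ternary analogue of Lemma \ref{l4.1}, with the single span replaced by a threefold one.
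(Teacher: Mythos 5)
Your proof is correct, but there is nothing in the paper to compare it against: the paper states Lemma \ref{l4.2} without proof, merely as a ``variant'' of Lemma \ref{l4.1}, which is itself only recalled from Lavendhomme (Proposition 6 of \S 2.2 of \cite{la}). What you have done is supply the missing argument, and you have done it by precisely the method the paper itself uses for its own quasi-colimit lemmas in \S \ref{s5} (Lemmas \ref{l5.2}, \ref{l5.3} and \ref{l5.5}): invoke the Remark after Axiom \ref{a1.1} to reduce ``quasi-colimit'' to ``limit after applying $\rightarrow\mathbb{R}$'', use the homotopical Kock--Lawvere axiom to put every map into $\mathbb{R}$ into its unique affine normal form, and match coefficients. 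Your key computation is exactly right: on normal forms the restriction map sends $(a,a_{1},a_{2},a_{3})$ to the triple $\left(  (a,a_{1}),(a,a_{2}),(a,a_{3})\right)$, while the wide pullback consists of triples $\left(  (c_{1},b_{1}),(c_{2},b_{2}),(c_{3},b_{3})\right)$ with $c_{1}=c_{2}=c_{3}$, so the map is a bijection with the inverse you describe, the mixed terms being killed by the defining relations of $D(3)$. Your closing remark---that dually this says $\mathbb{R}\left[  X,Y,Z\right]  /(X^{2},Y^{2},Z^{2},XY,YZ,ZX)$ is the fibred product of three copies of $\mathbb{R}\left[  X\right]  /(X^{2})$ over $\mathbb{R}$ along the augmentations---would even let you conclude directly from the paper's definition of a quasi-colimit diagram (the image under $\mathrm{Spec}_{\mathbb{R}}$ of a finite limit of Weil algebras), bypassing the Remark altogether; either route is legitimate. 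Two small points deserve to be made explicit rather than tacit: the unlabeled arrows $1\rightarrow D$ must be read as picking out $0:D$, so that their duals are the evaluations $\mathrm{ev}_{0}$ you use; and the identification of the homotopy limit with the strict subtype of triples agreeing at $0$ is harmless precisely because $\mathbb{R}$, hence also $D\rightarrow\mathbb{R}$, is a set by Axiom \ref{a1.0}, so the relevant identity types are propositions.
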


\begin{corollary}
\label{cl4.2}Let $M$\ be a microlinear set with $x:M$. Given $t_{1}%
,t_{2}:D\rightarrow M$ with $t_{1}\left(  0\right)  =t_{2}\left(  0\right)
=x$, there exists $\mathfrak{l}_{\left(  t_{1},t_{2},t_{3}\right)
}:D(3)\rightarrow M$ such that
\begin{align*}
\mathfrak{l}_{\left(  t_{1},t_{2},t_{3}\right)  }\circ\left(  \lambda
_{d:D}\left(  d,0,0\right)  \right)   &  =t_{1}\\
\mathfrak{l}_{\left(  t_{1},t_{2},t_{3}\right)  }\circ\left(  \lambda
_{d:D}\left(  0,d,0\right)  \right)   &  =t_{2}\\
\mathfrak{l}_{\left(  t_{1},t_{2},t_{3}\right)  }\circ\left(  \lambda
_{d:D}\left(  0,0,d\right)  \right)   &  =t_{3}%
\end{align*}

\end{corollary}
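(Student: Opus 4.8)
The plan is to obtain the statement as the universal property of the limit diagram produced by exponentiating the quasi-colimit diagram of Lemma \ref{l4.2}, in exactly the way Corollary \ref{cl4.1} is read off from Lemma \ref{l4.1}. Since $M$ is a set we have $\left\Vert M\right\Vert _{0}\simeq M$, so by Proposition \ref{p3.1} the definition of microlinearity bears directly on $M$ and I may work with $M$ itself throughout. I note in passing that the hypothesis as printed, $t_{1}(0)=t_{2}(0)=x$, is an evident slip for $t_{1}(0)=t_{2}(0)=t_{3}(0)=x$, since $t_{3}$ and $\mathfrak{l}_{\left(t_{1},t_{2},t_{3}\right)}$ both occur in the conclusion.

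First I would invoke Lemma \ref{l4.2}: the displayed diagram, with apex $1$, three middle copies of $D$, and base $D(3)$, is a quasi-colimit diagram of small objects. Applying the contravariant exponentiation functor $\rightarrow M$ and using microlinearity of $M$, the resulting diagram is a limit diagram. Because exponentiation carries the colimit cocone with vertex $1$ to a limit cone with vertex $M$, the base $D(3)$ becomes the apex of the limit: the three legs $1\rightarrow D$ turn into the three evaluation-at-$0$ maps $(D\rightarrow M)\rightarrow M$, while the three inclusions $\lambda_{d:D}(d,0,0)$, $\lambda_{d:D}(0,d,0)$, $\lambda_{d:D}(0,0,d)$ turn into the three precomposition maps out of $D(3)\rightarrow M$.

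The remaining step is to identify this limit explicitly. The limit of three maps $(D\rightarrow M)\rightarrow M$ over the common vertex $M$ is the wide pullback, that is, the type of triples $(s_{1},s_{2},s_{3})$ of functions $D\rightarrow M$ subject to $s_{1}(0)=s_{2}(0)=s_{3}(0)$; under this identification the canonical map
\[
(D(3)\rightarrow M)\rightarrow\left\{(s_{1},s_{2},s_{3})\mid s_{i}:D\rightarrow M,\;s_{1}(0)=s_{2}(0)=s_{3}(0)\right\}
\]
sending $h$ to its three composites $(h\circ\iota_{1},h\circ\iota_{2},h\circ\iota_{3})$ with the inclusions $\iota_{i}$ is an equivalence. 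Given $t_{1},t_{2},t_{3}:D\rightarrow M$ all based at $x$, the triple $(t_{1},t_{2},t_{3})$ is a point of this limit, and transporting it along the inverse equivalence yields the desired $\mathfrak{l}_{\left(t_{1},t_{2},t_{3}\right)}:D(3)\rightarrow M$ whose composites with $\iota_{1},\iota_{2},\iota_{3}$ are $t_{1},t_{2},t_{3}$ respectively, which is precisely the asserted conclusion.

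Since the argument runs in complete parallel to the two-legged case of Corollary \ref{cl4.1}, I anticipate no genuine obstacle. The only point demanding care is the explicit description of the exponentiated diagram as a three-fold limit over $M$, where one must confirm that the limit of a wide cospan of three arrows into $M$ is precisely the type of basepoint-compatible triples and that its projections correspond to precomposition with the $\iota_{i}$. The bookkeeping of three legs rather than two is the sole difference from the earlier corollary.
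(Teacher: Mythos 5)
Your proposal is correct and is exactly the argument the paper intends: the corollary is stated without proof precisely because it is meant to be read off from Lemma \ref{l4.2} by exponentiating the quasi-colimit diagram over the microlinear set $M$ and invoking the universal property of the resulting limit, just as Corollary \ref{cl4.1} is read off from Lemma \ref{l4.1}. You are also right that the hypothesis as printed omits $t_{3}$ (it should read $t_{1},t_{2},t_{3}:D\rightarrow M$ with $t_{1}(0)=t_{2}(0)=t_{3}(0)=x$); this is a typographical slip in the paper, not a gap in your argument.
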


\begin{definition}
Given a microlinear type $M$\ with $x:M$, we define addition and scalar
multiplication on $\mathbf{T}_{x}M$\ as follows:For $t,t_{1},t_{2}%
:\mathbf{T}_{x}M$ and $\alpha:\mathbb{R}$, $t_{1}+t_{2}$\ and $\alpha t$\ are
defined to be%
\begin{align*}
t_{1}+t_{2}  &  :\equiv\lambda_{d:D}\mathfrak{l}_{\left(  t_{1},t_{2}\right)
}\left(  d,d\right) \\
\alpha t  &  :\equiv\lambda_{d:D}t\left(  \alpha d\right)
\end{align*}

\end{definition}

\begin{theorem}
\label{t4.1}Let $M$\ be a microlinear type with $x:M$. Given $\alpha
,\beta:\mathbb{R}$ and $t,t_{1},t_{2},t_{3}:\mathbf{T}_{x}(M)$, we have
\begin{align}
\left(  t_{1}+t_{2}\right)  +t_{3}  &  =t_{1}+\left(  t_{2}+t_{3}\right)
\label{t4.1.1}\\
t_{1}+t_{2}  &  =t_{2}+t_{1}\label{t4.1.2}\\
1t  &  =t\label{t4.1.5}\\
\left(  \alpha+\beta\right)  t  &  =\alpha t+\beta t\label{t4.1.6}\\
\alpha\left(  t_{1}+t_{2}\right)   &  =\alpha t_{1}+\alpha t_{2}%
\label{t4.1.7}\\
\left(  \alpha\beta\right)  t  &  =\alpha\left(  \beta t\right)
\label{t4.1.8}%
\end{align}
In a word, the type $\mathbf{T}_{x}(M)$ is an $\mathbb{R}$-module.
\end{theorem}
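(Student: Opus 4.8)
The plan is to reduce every one of the six identities to the \emph{uniqueness} of the extensions $\mathfrak{l}$ furnished by Corollaries \ref{cl4.1} and \ref{cl4.2}. Although those corollaries are phrased only as existence statements, the real content of microlinearity is that the quasi-colimit diagrams of Lemmas \ref{l4.1} and \ref{l4.2} are carried by $\mathcal{D}\rightarrow\left\Vert M\right\Vert_0$ to \emph{limit} (pullback) diagrams; hence a map $D(2)\rightarrow\left\Vert M\right\Vert_0$ is completely determined by its two restrictions along $\lambda_{d:D}(d,0)$ and $\lambda_{d:D}(0,d)$, and likewise any map out of $D(3)$ is determined by its three axis restrictions. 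Working throughout inside the set truncation $\left\Vert M\right\Vert_0$ makes these pullbacks honest limits of sets, so the uniqueness yields genuine equalities of functions. Every argument then has the same shape: exhibit an explicit map out of $D(2)$ or $D(3)$, compute its restrictions to the coordinate axes, invoke uniqueness to identify it with the relevant $\mathfrak{l}$, and evaluate on the diagonal $\lambda_{d:D}(d,\dots,d)$.

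First I would dispatch the scalar laws. The identities $1t=t$ and $\left(\alpha\beta\right)t=\alpha\left(\beta t\right)$ are immediate from the definition $\alpha t:\equiv\lambda_{d:D}t\left(\alpha d\right)$ together with the ring axioms for $\mathbb{R}$. For $\left(\alpha+\beta\right)t=\alpha t+\beta t$ I would consider the map $\lambda_{\left(d_1,d_2\right):D(2)}t\left(\alpha d_1+\beta d_2\right)$, which is well defined because $\left(\alpha d_1+\beta d_2\right)^2=\alpha^2d_1^2+2\alpha\beta d_1d_2+\beta^2d_2^2=0$ on $D(2)$; its two restrictions are $\alpha t$ and $\beta t$, so by uniqueness it is $\mathfrak{l}_{\left(\alpha t,\beta t\right)}$, and its diagonal value $t\left(\left(\alpha+\beta\right)d\right)$ is exactly $\left(\alpha+\beta\right)t$. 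The law $\alpha\left(t_1+t_2\right)=\alpha t_1+\alpha t_2$ is handled by the map $\lambda_{\left(d_1,d_2\right):D(2)}\mathfrak{l}_{\left(t_1,t_2\right)}\left(\alpha d_1,\alpha d_2\right)$, and commutativity $t_1+t_2=t_2+t_1$ by composing $\mathfrak{l}_{\left(t_1,t_2\right)}$ with the coordinate swap on $D(2)$, which fixes the diagonal.

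The genuine work is associativity, and this is where Corollary \ref{cl4.2} and the object $D(3)$ enter. The key auxiliary fact I would establish first, again by uniqueness, is that restricting $\mathfrak{l}_{\left(t_1,t_2,t_3\right)}$ to any coordinate plane recovers the corresponding pairwise extension, e.g. $\mathfrak{l}_{\left(t_1,t_2,t_3\right)}\left(d_1,d_2,0\right)=\mathfrak{l}_{\left(t_1,t_2\right)}\left(d_1,d_2\right)$. Granting this, the map $\lambda_{\left(d_1,d_2\right):D(2)}\mathfrak{l}_{\left(t_1,t_2,t_3\right)}\left(d_1,d_1,d_2\right)$ restricts to $t_1+t_2$ and to $t_3$, hence equals $\mathfrak{l}_{\left(t_1+t_2,t_3\right)}$, so that $\left(t_1+t_2\right)+t_3=\lambda_{d:D}\mathfrak{l}_{\left(t_1,t_2,t_3\right)}\left(d,d,d\right)$; the symmetric map $\lambda_{\left(d_1,d_2\right):D(2)}\mathfrak{l}_{\left(t_1,t_2,t_3\right)}\left(d_1,d_2,d_2\right)$ shows $t_1+\left(t_2+t_3\right)$ equals the very same diagonal, and associativity follows.

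The main obstacle I anticipate is not any single computation but the bookkeeping of these identifications: every step hinges on microlinearity upgrading the bare existence of the Corollaries to uniqueness, so I would be most careful to check that each auxiliary function is genuinely well defined on $D(2)$ or $D(3)$ (all requisite products of distinct infinitesimals must vanish) and that its axis restrictions are computed correctly, since it is precisely those restrictions that the pullback property uses to pin down the map. The passage to $\left\Vert M\right\Vert_0$ is essential here, as it is what turns the exponentiated quasi-colimit diagrams into limits of \emph{sets}, and therefore delivers equalities of tangent vectors rather than merely higher coherences.
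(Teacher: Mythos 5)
Your proposal is correct and follows essentially the same route as the paper's own proof: the scalar laws by direct computation, the distributive laws and commutativity by exhibiting explicit maps on $D(2)$ and identifying them with the relevant $\mathfrak{l}$ via their axis restrictions, and associativity by showing both $\left(t_{1}+t_{2}\right)+t_{3}$ and $t_{1}+\left(t_{2}+t_{3}\right)$ equal the diagonal $\lambda_{d:D}\mathfrak{l}_{\left(t_{1},t_{2},t_{3}\right)}\left(d,d,d\right)$ using Corollary \ref{cl4.2}. Your explicit emphasis that microlinearity upgrades the existence statements of Corollaries \ref{cl4.1} and \ref{cl4.2} to uniqueness (since the exponentiated diagrams are limits of sets) is exactly the principle the paper uses implicitly in each step of the form ``so that $\mathfrak{l}_{\left(\cdot,\cdot\right)}=\dots$''.
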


\begin{proof}
We deal with the six properties in order.

\begin{enumerate}
\item It is easy to see that%
\begin{align*}
\left(  \lambda_{\left(  d_{1},d_{2}\right)  :D\left(  2\right)  }%
\mathfrak{l}_{\left(  t_{1},t_{2},t_{3}\right)  }\left(  d_{1},d_{2},0\right)
\right)  \circ\left(  \lambda_{d:D}\left(  d,0\right)  \right)   &
=\lambda_{d:D}t_{1}\left(  d\right) \\
\left(  \lambda_{\left(  d_{1},d_{2}\right)  :D\left(  2\right)  }%
\mathfrak{l}_{\left(  t_{1},t_{2},t_{3}\right)  }\left(  d_{1},d_{2},0\right)
\right)  \circ\left(  \lambda_{d:D}\left(  0,d\right)  \right)   &
=\lambda_{d:D}t_{2}\left(  d\right)
\end{align*}
so that%
\[
\mathfrak{l}_{\left(  t_{1},t_{2}\right)  }=\lambda_{\left(  d_{1}%
,d_{2}\right)  :D\left(  2\right)  }\mathfrak{l}_{\left(  t_{1},t_{2}%
,t_{3}\right)  }\left(  d_{1},d_{2},0\right)
\]
and consequently%
\[
t_{1}+t_{2}=\lambda_{d:D}\mathfrak{l}_{\left(  t_{1},t_{2},t_{3}\right)
}\left(  d,d,0\right)
\]
It is easy to see that%
\begin{align*}
\left(  \lambda_{\left(  d_{1},d_{2}\right)  :D\left(  2\right)  }%
\mathfrak{l}_{\left(  t_{1},t_{2},t_{3}\right)  }\left(  d_{1},d_{1}%
,d_{2}\right)  \right)  \circ\left(  \lambda_{d:D}\left(  d,0\right)  \right)
&  =\lambda_{d:D}\mathfrak{l}_{\left(  t_{1},t_{2},t_{3}\right)  }\left(
d,d,0\right)  =t_{1}+t_{2}\\
\left(  \lambda_{\left(  d_{1},d_{2}\right)  :D\left(  2\right)  }%
\mathfrak{l}_{\left(  t_{1},t_{2},t_{3}\right)  }\left(  d_{1},d_{1}%
,d_{2}\right)  \right)  \circ\left(  \lambda_{d:D}\left(  0,d\right)  \right)
&  =\lambda_{d:D}\mathfrak{l}_{\left(  t_{1},t_{2},t_{3}\right)  }\left(
0,0,d\right)  =t_{3}%
\end{align*}
so that%
\[
\mathfrak{l}_{\left(  t_{1}+t_{2},t_{3}\right)  }=\lambda_{\left(  d_{1}%
,d_{2}\right)  :D\left(  2\right)  }\mathfrak{l}_{\left(  t_{1},t_{2}%
,t_{3}\right)  }\left(  d_{1},d_{1},d_{2}\right)
\]
and consequently%
\begin{equation}
\left(  t_{1}+t_{2}\right)  +t_{3}=\lambda_{d:D}\mathfrak{l}_{\left(
t_{1},t_{2},t_{3}\right)  }\left(  d,d,d\right)  \label{t4.1.9}%
\end{equation}
On the other hand, it is easy to see that%
\begin{align*}
\left(  \lambda_{\left(  d_{1},d_{2}\right)  :D\left(  2\right)  }%
\mathfrak{l}_{\left(  t_{1},t_{2},t_{3}\right)  }\left(  0,d_{1},d_{2}\right)
\right)  \circ\left(  \lambda_{d:D}\left(  d,0\right)  \right)   &
=\lambda_{d:D}t_{2}\left(  d\right) \\
\left(  \lambda_{\left(  d_{1},d_{2}\right)  :D\left(  2\right)  }%
\mathfrak{l}_{\left(  t_{1},t_{2},t_{3}\right)  }\left(  0,d_{1},d_{2}\right)
\right)  \circ\left(  \lambda_{d:D}\left(  0,d\right)  \right)   &
=\lambda_{d:D}t_{3}\left(  d\right)
\end{align*}
so that%
\[
\mathfrak{l}_{\left(  t_{2},t_{3}\right)  }=\lambda_{\left(  d_{1}%
,d_{2}\right)  :D\left(  2\right)  }\mathfrak{l}_{\left(  t_{1},t_{2}%
,t_{3}\right)  }\left(  0,d_{1},d_{2}\right)
\]
and consequently%
\[
t_{2}+t_{3}=\lambda_{d:D}\mathfrak{l}_{\left(  t_{1},t_{2},t_{3}\right)
}\left(  0,d,d\right)
\]
It is easy to see that%
\begin{align*}
\left(  \lambda_{\left(  d_{1},d_{2}\right)  :D\left(  2\right)  }%
\mathfrak{l}_{\left(  t_{1},t_{2},t_{3}\right)  }\left(  d_{1},d_{2}%
,d_{2}\right)  \right)  \circ\left(  \lambda_{d:D}\left(  d,0\right)  \right)
&  =\lambda_{d:D}\mathfrak{l}_{\left(  t_{1},t_{2},t_{3}\right)  }\left(
d,0,0\right)  =t_{1}\\
\left(  \lambda_{\left(  d_{1},d_{2}\right)  :D\left(  2\right)  }%
\mathfrak{l}_{\left(  t_{1},t_{2},t_{3}\right)  }\left(  d_{1},d_{2}%
,d_{2}\right)  \right)  \circ\left(  \lambda_{d:D}\left(  0,d\right)  \right)
&  =\lambda_{d:D}\mathfrak{l}_{\left(  t_{1},t_{2},t_{3}\right)  }\left(
0,d,d\right)  =t_{2}+t_{3}%
\end{align*}
so that%
\[
\mathfrak{l}_{\left(  t_{1},t_{2}+t_{3}\right)  }=\lambda_{\left(  d_{1}%
,d_{2}\right)  :D\left(  2\right)  }\mathfrak{l}_{\left(  t_{1},t_{2}%
,t_{3}\right)  }\left(  d_{1},d_{2},d_{2}\right)
\]
and consequently%
\begin{equation}
t_{1}+\left(  t_{2}+t_{3}\right)  =\lambda_{d:D}\mathfrak{l}_{\left(
t_{1},t_{2},t_{3}\right)  }\left(  d,d,d\right)  \label{t4.1.10}%
\end{equation}
It follows from (\ref{t4.1.9}) and (\ref{t4.1.10}) that (\ref{t4.1.1}) obtains.

\item It is easy to see that%
\begin{align*}
\left(  \lambda_{\left(  d_{1},d_{2}\right)  :D\left(  2\right)  }%
\mathfrak{l}_{\left(  t_{1},t_{2}\right)  }\left(  d_{2},d_{1}\right)
\right)  \circ\left(  \lambda_{d:D}\left(  d,0\right)  \right)   &
=\lambda_{d:D}t_{2}\left(  d\right) \\
\left(  \lambda_{\left(  d_{1},d_{2}\right)  :D\left(  2\right)  }%
\mathfrak{l}_{\left(  t_{1},t_{2}\right)  }\left(  d_{2},d_{1}\right)
\right)  \circ\left(  \lambda_{d:D}\left(  0,d\right)  \right)   &
=\lambda_{d:D}t_{1}\left(  d\right)
\end{align*}
so that%
\[
\mathfrak{l}_{\left(  t_{2},t_{1}\right)  }=\lambda_{\left(  d_{1}%
,d_{2}\right)  :D\left(  2\right)  }\mathfrak{l}_{\left(  t_{1},t_{2}\right)
}\left(  d_{2},d_{1}\right)
\]
Therefore we have%
\begin{align*}
t_{2}+t_{1}  &  =\lambda_{d:D}\mathfrak{l}_{\left(  t_{2},t_{1}\right)
}\left(  d,d\right) \\
&  =\lambda_{d:D}\mathfrak{l}_{\left(  t_{1},t_{2}\right)  }\left(  d,d\right)
\\
&  =\lambda_{d:D}\left(  t_{1}+t_{2}\right)  \left(  d\right)
\end{align*}
so that (\ref{t4.1.2}) obtains.

\item It is easy to see that, for any $d:D$, we have%
\[
\left(  1t\right)  \left(  d\right)  =t\left(  1d\right)  =t\left(  d\right)
\]
so that (\ref{t4.1.5}) obtains.

\item It is easy to see that%
\begin{align*}
\left(  \lambda_{\left(  d_{1},d_{2}\right)  :D\left(  2\right)  }t\left(
\alpha d_{1}+\beta d_{2}\right)  \right)  \circ\left(  \lambda_{d:D}\left(
d,0\right)  \right)   &  =\lambda_{d:D}t\left(  \alpha d\right)
=\lambda_{d:D}\left(  \alpha t\right)  \left(  d\right) \\
\left(  \lambda_{\left(  d_{1},d_{2}\right)  :D\left(  2\right)  }t\left(
\alpha d_{1}+\beta d_{2}\right)  \right)  \circ\left(  \lambda_{d:D}\left(
0,d\right)  \right)   &  =\lambda_{d:D}t\left(  \beta d\right)  =\lambda
_{d:D}\left(  \beta t\right)  \left(  d\right)
\end{align*}
so that%
\[
\mathfrak{l}_{\left(  \alpha t,\beta t\right)  }=\lambda_{\left(  d_{1}%
,d_{2}\right)  :D\left(  2\right)  }t\left(  \alpha d_{1}+\beta d_{2}\right)
\]
Therefore, for any $d:D$, we have%
\begin{align*}
\left(  \alpha+\beta\right)  t  &  =\lambda_{d:D}t\left(  \left(  \alpha
+\beta\right)  d\right) \\
&  =\lambda_{d:D}t\left(  \alpha d+\beta d\right) \\
&  =\lambda_{d:D}\mathfrak{l}_{\left(  \alpha t,\beta t\right)  }\left(
d,d\right) \\
&  =\lambda_{d:D}\left(  \alpha t+\beta t\right)  \left(  d\right)
\end{align*}
so that (\ref{t4.1.6}) obtains.

\item It is easy to see that%
\begin{align*}
\left(  \lambda_{\left(  d_{1},d_{2}\right)  :D\left(  2\right)  }%
\mathfrak{l}_{\left(  t_{1},t_{2}\right)  }\left(  \alpha d_{1},\alpha
d_{2}\right)  \right)  \circ\left(  \lambda_{d:D}\left(  d,0\right)  \right)
&  =\lambda_{d:D}t_{1}\left(  \alpha d\right)  =\lambda_{d:D}\left(  \alpha
t_{1}\right)  \left(  d\right) \\
\left(  \lambda_{\left(  d_{1},d_{2}\right)  :D\left(  2\right)  }%
\mathfrak{l}_{\left(  t_{1},t_{2}\right)  }\left(  \alpha d_{1},\alpha
d_{2}\right)  \right)  \circ\left(  \lambda_{d:D}\left(  0,d\right)  \right)
&  =\lambda_{d:D}t_{2}\left(  \alpha d\right)  =\lambda_{d:D}\left(  \alpha
t_{2}\right)  \left(  d\right)
\end{align*}
so that%
\[
\mathfrak{l}_{\left(  \alpha t_{1},\alpha t_{2}\right)  }=\lambda_{\left(
d_{1},d_{2}\right)  :D\left(  2\right)  }\mathfrak{l}_{\left(  t_{1}%
,t_{2}\right)  }\left(  \alpha d_{1},\alpha d_{2}\right)
\]
Therefore, for any $d:D$, we have%
\begin{align*}
\alpha\left(  t_{1}+t_{2}\right)   &  =\alpha\left(  \lambda_{d:D}%
\mathfrak{l}_{\left(  t_{1},t_{2}\right)  }\left(  d.d\right)  \right) \\
&  =\lambda_{d:D}\mathfrak{l}_{\left(  t_{1},t_{2}\right)  }\left(  \alpha
d.\alpha d\right) \\
&  =\lambda_{d:D}\mathfrak{l}_{\left(  \alpha t_{1},\alpha t_{2}\right)
}\left(  d.d\right) \\
&  =\lambda_{d:D}\left(  \alpha t_{1}+\alpha t_{2}\right)  \left(  d\right)
\end{align*}
so that (\ref{t4.1.7}) obtains.

\item It is easy to see that%
\begin{align*}
\left(  \alpha\beta\right)  t  &  =\lambda_{d:D}t\left(  \alpha\beta d\right)
\\
&  =\lambda_{d:D}\left(  \beta t\right)  \left(  \alpha d\right) \\
&  =\lambda_{d:D}\left(  \alpha\left(  \beta t\right)  \right)  \left(
d\right)
\end{align*}
so that (\ref{t4.1.8}) obtains.
\end{enumerate}
\end{proof}

We recall that

\begin{lemma}
\label{l6.1}(cf.Proposition 7 of \S 2.2 in \cite{la}) The following is a
quasi-colimit diagram:%
\[%
\begin{array}
[c]{ccccc}
& \underrightarrow{\lambda_{d:D}\left(  d,0\right)  } &  &  & \\
D & \underrightarrow{\lambda_{d:D}\left(  0,d\right)  } & D\times D &
\underrightarrow{\lambda_{\left(  d_{1},d_{2}\right)  :D\times D}d_{1}d_{2}} &
D\\
& \underrightarrow{\lambda_{d:D}\left(  0,0\right)  } &  &  &
\end{array}
\]

\end{lemma}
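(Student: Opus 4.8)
The plan is to reduce the assertion to a finite-dimensional computation with Weil algebras, which is exactly the definitional content of ``quasi-colimit diagram of small objects'' introduced in \S\ref{s3}. By that definition together with the Remark following Axiom \ref{a1.1}, it suffices to exhibit the displayed diagram as the image under the contravariant functor $\mathrm{Spec}_{\mathbb{R}}$ of a \emph{limit} diagram of Weil algebras (equivalently, to check that exponentiating the diagram over $\mathbb{R}$ yields a limit diagram, where Kock--Lawvere gives $D\to\mathbb{R}\simeq\mathbb{R}^{2}$ and $D\times D\to\mathbb{R}\simeq\mathbb{R}^{4}$). So the first task is to identify the three small objects and the four arrows on the level of Weil algebras.

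First I would record the dictionary. By the examples accompanying the Notation after Axiom \ref{a1.0}, $D=\mathrm{Spec}_{\mathbb{R}}\mathbb{R}[X]/(X^{2})$; and since $\mathrm{Spec}_{\mathbb{R}}$ carries the coproduct (tensor product over $\mathbb{R}$) of Weil algebras to the product of infinitesimal types, $D\times D=\mathrm{Spec}_{\mathbb{R}}\mathbb{R}[X,Y]/(X^{2},Y^{2})$, the four-dimensional algebra with basis $1,X,Y,XY$. (Note this differs from $D(2)$, where the extra relation $XY=0$ is imposed.) The multiplication $\mu:\equiv\lambda_{(d_{1},d_{2})}d_{1}d_{2}:D\times D\to D$ is dual to the homomorphism $\mu^{*}:\mathbb{R}[Z]/(Z^{2})\to\mathbb{R}[X,Y]/(X^{2},Y^{2})$ with $\mu^{*}(Z)=XY$, well defined because $(XY)^{2}=X^{2}Y^{2}=0$. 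Likewise the three arrows $\lambda_{d}(d,0)$, $\lambda_{d}(0,d)$, $\lambda_{d}(0,0)$ are dual to the homomorphisms $\mathbb{R}[X,Y]/(X^{2},Y^{2})\to\mathbb{R}[T]/(T^{2})$ sending $(X,Y)$ to $(T,0)$, $(0,T)$, $(0,0)$ respectively.

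Then I would verify the equalizer. The limit over the displayed diagram-shape (three parallel arrows out of the middle vertex) is the simultaneous equalizer of those three homomorphisms. A general element $a+bX+cY+eXY$ of $\mathbb{R}[X,Y]/(X^{2},Y^{2})$ is sent to $a+bT$, $a+cT$, and $a$; these three coincide in $\mathbb{R}[T]/(T^{2})$ precisely when $b=c=0$. The equalizer is therefore $\{a+eXY\mid a,e:\mathbb{R}\}$, which is exactly the image of the injection $\mu^{*}$; hence $\mu^{*}$ realizes $\mathbb{R}[Z]/(Z^{2})$ as this equalizer. Applying $\mathrm{Spec}_{\mathbb{R}}$ recovers the displayed diagram, so it is a quasi-colimit diagram.

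The computations are routine linear algebra over $\mathbb{R}$; the only care needed is bookkeeping---getting the directions of the dual homomorphisms right, checking the well-definedness $(XY)^{2}=0$, and confirming that the correct notion of limit for a fan of three parallel arrows is the simultaneous equalizer, so that it is legitimate to compute it module-wise. The one genuinely conceptual point I expect to be the main obstacle is the reduction itself: justifying via the Remark after Axiom \ref{a1.1} that verifying the limit on Weil algebras is equivalent to the quasi-colimit property; everything downstream of that reduction is formal.
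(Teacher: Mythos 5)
Your proof is correct. Note, however, that the paper itself offers no proof of Lemma \ref{l6.1}: it is prefaced by ``We recall that'' and attributed to Proposition 7 of \S 2.2 of \cite{la}, so your argument supplies what the paper delegates to the literature. Your route --- reduce via the definition of quasi-colimit (equivalently the Remark after Axiom \ref{a1.1}) to a limit computation with Weil algebras, identify $D\times D$ with $\mathrm{Spec}_{\mathbb{R}}\mathbb{R}[X,Y]/(X^{2},Y^{2})$, and check that $\mu^{*}:Z\mapsto XY$ maps $\mathbb{R}[Z]/(Z^{2})$ isomorphically onto the simultaneous equalizer of the three evaluation homomorphisms --- is precisely the algebraic dual of the method the paper uses when it does prove such lemmas (Lemmas \ref{l5.2}, \ref{l5.3}, \ref{l5.5}): there an arbitrary $\gamma:D^{2}\rightarrow\mathbb{R}$ is expanded by Axiom \ref{a1.1} as $a+bd_{1}+cd_{2}+ed_{1}d_{2}$, the compatibility conditions force certain coefficients to vanish, and the surviving coefficients assemble into the required function on the colimit vertex. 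Your computation that $a+bX+cY+eXY$ is sent to $a+bT$, $a+cT$, $a$, and that these agree iff $b=c=0$, is literally the same coefficient comparison, so the two presentations differ only in whether one speaks of an equalizer of algebra maps or of unique extension of compatible functions $D^{2}\rightarrow\mathbb{R}$; your phrasing has the small advantage of making the uniqueness assertion of Corollary \ref{cl6.1} visible as the injectivity of $\mu^{*}$, while the paper's phrasing stays closer to the type-theoretic formulation of Axiom \ref{a1.1}.
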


\begin{corollary}
\label{cl6.1}Let $M$ be a microlinear set. Given%
\[
\theta:D\times D\rightarrow M
\]
in accordance with%
\[
\theta\circ\left(  \lambda_{d:D}\left(  d,0\right)  \right)  =\theta
\circ\left(  \lambda_{d:D}\left(  0,d\right)  \right)  =\theta\circ\left(
\lambda_{d:D}\left(  0,0\right)  \right)
\]
there exists a homotopically unique%
\[
t:D\rightarrow M
\]
in accordance with%
\[
t\circ\left(  \lambda_{\left(  d_{1},d_{2}\right)  :D\times D}d_{1}%
d_{2}\right)  =\theta
\]

\end{corollary}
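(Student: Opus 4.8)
The plan is to obtain the corollary as a direct unfolding of the definition of microlinearity, applied to the quasi-colimit diagram of Lemma \ref{l6.1}. Because $M$ is assumed to be a set, its set truncation $\left\Vert M\right\Vert _{0}$ coincides with $M$, so microlinearity tells us precisely that the result of exponentiating the diagram of Lemma \ref{l6.1} by $M$ (applying the contravariant functor $-\rightarrow M$) is a limit diagram of types.

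First I would write the exponentiated diagram out explicitly. Under $-\rightarrow M$ every arrow reverses: the multiplication $\mu:\equiv\lambda_{(d_1,d_2):D\times D}d_1 d_2$ becomes the precomposition map
\[
\mu^*:\equiv\lambda_{t:D\rightarrow M}\,t\circ\mu:(D\rightarrow M)\rightarrow(D\times D\rightarrow M),
\]
while the three inclusions $i_1:\equiv\lambda_{d:D}(d,0)$, $i_2:\equiv\lambda_{d:D}(0,d)$ and $i_0:\equiv\lambda_{d:D}(0,0)$ become three parallel precomposition maps
\[
i_1^*,\ i_2^*,\ i_0^*:(D\times D\rightarrow M)\rightarrow(D\rightarrow M).
\]
The exponentiated diagram is thus the fork whose apex $(D\rightarrow M)$ sits over $(D\times D\rightarrow M)$ via $\mu^*$, with the three arrows $i_1^*,i_2^*,i_0^*$ returning to $(D\rightarrow M)$.

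Next I would identify what being a limit diagram says here. The three composites $\mu\circ i_1$, $\mu\circ i_2$, $\mu\circ i_0$ all reduce to the constant map $\lambda_{d:D}0$ (each carries a zero among its two arguments), so that $i_1^*\circ\mu^*=i_2^*\circ\mu^*=i_0^*\circ\mu^*$ and $\mu^*$ is a genuine cone over the three parallel arrows. Microlinearity asserts that this cone is limiting, i.e. that $\mu^*$ exhibits $(D\rightarrow M)$ as the joint equalizer
\[
E:\equiv\sum_{\theta:D\times D\rightarrow M}\bigl(\theta\circ i_1=\theta\circ i_2\bigr)\times\bigl(\theta\circ i_2=\theta\circ i_0\bigr).
\]
The first component of a point of $E$ is exactly a $\theta$ satisfying $\theta\circ i_1=\theta\circ i_2=\theta\circ i_0$, which is precisely the hypothesis of the corollary.

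Finally, I would read off both existence and uniqueness from the equivalence $\mu^*:(D\rightarrow M)\xrightarrow{\ \simeq\ }E$. An equivalence has contractible fibres, so for $\theta$ in $E$ the type $\sum_{t:D\rightarrow M}(t\circ\mu=\theta)$ is contractible; this is exactly a homotopically unique $t:D\rightarrow M$ with $t\circ(\lambda_{(d_1,d_2):D\times D}d_1d_2)=\theta$. The only real care needed---and the one mild obstacle---is the bookkeeping that translates ``$\mu^*$ is a limiting cone for the three-parallel-arrow diagram'' into the explicit equalizer type $E$ and then into the contractible-fibre statement; once the fork is set up correctly, the existence of $t$ and its homotopical uniqueness fall out together.
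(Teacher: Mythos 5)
Your proposal is correct and is essentially the paper's own argument: the paper states this corollary without any proof, treating it as the immediate unfolding of Lemma \ref{l6.1} together with the definition of microlinearity (using $\left\Vert M\right\Vert _{0}\backsimeq M$ for a set $M$), which is precisely what you carry out. The equalizer/contractible-fibre bookkeeping you flag is routine here because all types involved are sets, so nothing is missing.
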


\begin{theorem}
\label{t4.2}(cf. Proposition 2 of \S 3.1.1 in \cite{la}) Let $M$\ be a
microlinear type. For any $x:M$, the $\mathbb{R}$-module $\mathbf{T}_{x}%
(M)$\ is Euclidean.
\end{theorem}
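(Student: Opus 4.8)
The plan is to reduce the Euclidean condition for $\mathbf{T}_{x}(M)$ to a single application of Corollary \ref{cl6.1}, which already packages all the microlinearity that is needed. First I would exploit the $\mathbb{R}$-module structure furnished by Theorem \ref{t4.1}: given $f:D\rightarrow\mathbf{T}_{x}(M)$, set $g:\equiv\lambda_{d:D}\left(f(d)-f(0)\right)$, so that $g(0)=0$ and $f(d)=f(0)+g(d)$. Since $\mathbf{T}_{x}(M)$ is an abelian group, the predicate $f(d)=f(0)+da$ is logically equivalent to $g(d)=da$, and because $\mathbf{T}_{x}(M)$ is a set (a subtype of $D\rightarrow\left\Vert M\right\Vert _{0}$, which is a set), this logical equivalence upgrades to an equivalence of the associated $\sum$-types. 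Hence it suffices to show that
\[
\sum_{a:\mathbf{T}_{x}(M)}\prod_{d:D}g(d)=da
\]
is contractible for every $g:D\rightarrow\mathbf{T}_{x}(M)$ with $g(0)=0$.

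Next I would uncurry $g$ into a map $\gamma:D\times D\rightarrow\left\Vert M\right\Vert _{0}$, $\gamma\left(d_{1},d_{2}\right):\equiv g(d_{1})(d_{2})$. The two boundary conditions then fall out for free: $\gamma\left(d_{1},0\right)=g(d_{1})(0)=\left\vert x\right\vert _{0}$ because $g(d_{1}):\mathbf{T}_{x}(M)$, and $\gamma\left(0,d_{2}\right)=g(0)(d_{2})=\left\vert x\right\vert _{0}$ because $g(0)=0$ is the constant tangent vector $\lambda_{d:D}\left\vert x\right\vert _{0}$ (recall $0t=\lambda_{d:D}t(0d)$). Therefore
\[
\gamma\circ\left(\lambda_{d:D}\left(d,0\right)\right)=\gamma\circ\left(\lambda_{d:D}\left(0,d\right)\right)=\gamma\circ\left(\lambda_{d:D}\left(0,0\right)\right)=\lambda_{d:D}\left\vert x\right\vert _{0}
\]
which is exactly the hypothesis of Corollary \ref{cl6.1}, applied to the set $\left\Vert M\right\Vert _{0}$ (microlinear by Proposition \ref{p3.1}).

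Corollary \ref{cl6.1} then supplies a homotopically unique $a:D\rightarrow\left\Vert M\right\Vert _{0}$ with $a\circ\mu=\gamma$, where $\mu:\equiv\lambda_{\left(d_{1},d_{2}\right):D\times D}d_{1}d_{2}$; that is, the type $\sum_{a:D\rightarrow\left\Vert M\right\Vert _{0}}\left(a\circ\mu=\gamma\right)$ is contractible. Setting $d_{1}=d_{2}=0$ gives $a(0)=\gamma(0,0)=\left\vert x\right\vert _{0}$, so $a$ automatically lands in $\mathbf{T}_{x}(M)$. Unwinding scalar multiplication, $(da)(e)=a(de)=\gamma(d,e)=g(d)(e)$, so by function extensionality $a\circ\mu=\gamma$ is precisely the condition $\prod_{d:D}g(d)=da$. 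Thus the contractible fibre produced by Corollary \ref{cl6.1} is, up to the evident equivalence, the $\sum$-type displayed above, which is what we want.

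The only genuine work — which I expect to be the main, though mild, obstacle — is to make the three identifications of $\sum$-types rigorous: (i) that discarding the constraint $a(0)=\left\vert x\right\vert _{0}$ is harmless, since it is a proposition implied by $a\circ\mu=\gamma$ and $\left\Vert M\right\Vert _{0}$ is a set, so the forgetful map from $\sum_{a:\mathbf{T}_{x}(M)}\left(a\circ\mu=\gamma\right)$ is an equivalence; (ii) that $a\circ\mu=\gamma$ and $\prod_{d:D}g(d)=da$ define equivalent predicates via function extensionality; and (iii) the passage back from $g$ to $f$. None of these carries geometric content: all of the microlinear input is concentrated in Corollary \ref{cl6.1} (hence in Lemma \ref{l6.1}), so once the based-curve reduction is in place the argument is essentially a translation exercise.
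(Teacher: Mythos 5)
Your proposal is correct and takes essentially the same route as the paper's own proof: your uncurried map $\gamma\left(d_{1},d_{2}\right)=\left(f\left(d_{1}\right)-f\left(0\right)\right)\left(d_{2}\right)$ is exactly the map $\lambda_{\left(d_{1},d_{2}\right):D\times D}\left(\varphi\left(d_{1}\right)-\varphi\left(0\right)\right)\left(d_{2}\right)$ that the paper feeds into Corollary \ref{cl6.1} after verifying the same three boundary conditions, with the homotopical uniqueness there supplying the contractibility. The only difference is that you make explicit the bookkeeping the paper leaves implicit, namely that $a\left(0\right)=\left\vert x\right\vert_{0}$ follows automatically, and that contractibility transfers across the evident equivalences of $\sum$-types.
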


\begin{proof}
It is easy to see that%
\begin{align*}
\left(  \lambda_{\left(  d_{1},d_{2}\right)  :D\times D}\left(  \varphi\left(
d_{1}\right)  -\varphi\left(  0\right)  \right)  \left(  d_{2}\right)
\right)  \circ\left(  \lambda_{d:D}\left(  d,0\right)  \right)   &
=\lambda_{d:D}\left\vert x\right\vert _{0}\\
\left(  \lambda_{\left(  d_{1},d_{2}\right)  :D\times D}\left(  \varphi\left(
d_{1}\right)  -\varphi\left(  0\right)  \right)  \left(  d_{2}\right)
\right)  \circ\left(  \lambda_{d:D}\left(  0,d\right)  \right)   &
=\lambda_{d:D}\left\vert x\right\vert _{0}\\
\left(  \lambda_{\left(  d_{1},d_{2}\right)  :D\times D}\left(  \varphi\left(
d_{1}\right)  -\varphi\left(  0\right)  \right)  \left(  d_{2}\right)
\right)  \circ\left(  \lambda_{d:D}\left(  0,0\right)  \right)   &
=\lambda_{d:D}\left\vert x\right\vert _{0}%
\end{align*}
Therefore, by dint of Corollary \ref{cl6.1}, there exists $t:D\rightarrow
\left\Vert M\right\Vert _{0}$ such that%
\[
\lambda_{\left(  d_{1},d_{2}\right)  :D\times D}\left(  \varphi\left(
d_{1}\right)  -\varphi\left(  0\right)  \right)  \left(  d_{2}\right)
=\lambda_{\left(  d_{1},d_{2}\right)  :D\times D}t\left(  d_{1}d_{2}\right)
\]
which is no other than%
\[%
{\displaystyle\prod\limits_{d:D}}
\varphi\left(  d\right)  -\varphi\left(  0\right)  =dt
\]
This completes the proof.
\end{proof}

\section{\label{s5}Strong Differences}

We recall that

\begin{lemma}
\label{l5.l}(The first Lemma of \S 3.4 in \cite{la}) The following diagram is
a quasi-colimit diagram:
\[%
\begin{array}
[c]{ccc}%
D^{2}\left\{  \left(  1,2\right)  \right\}  & \underrightarrow{\lambda
_{\left(  d_{1},d_{2}\right)  :D^{2}\left\{  \left(  1,2\right)  \right\}
}\left(  d_{1},d_{2}\right)  } & D^{2}\\
\lambda_{\left(  d_{1},d_{2}\right)  :D^{2}\left\{  \left(  1,2\right)
\right\}  }\left(  d_{1},d_{2}\right)  \downarrow &  & \downarrow
\lambda_{\left(  d_{1},d_{2}\right)  :D^{2}}\left(  d_{1},d_{2},d_{1}%
d_{2}\right) \\
D^{2} & \overrightarrow{\lambda_{\left(  d_{1},d_{2}\right)  :D^{2}}\left(
d_{1},d_{2},0\right)  } & D^{3}\left\{  \left(  1,3\right)  ,\left(
2,3\right)  \right\}
\end{array}
\]

\end{lemma}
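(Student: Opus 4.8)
The plan is to reduce the assertion to a purely algebraic pullback computation by way of the Remark following Axiom \ref{a1.1}, which states that a finite diagram of infinitesimal types is a quasi-colimit diagram precisely when the diagram obtained by applying the contravariant functor $\rightarrow\mathbb{R}$ is a limit diagram. By Axiom \ref{a1.1} (Kock-Lawvere), exponentiating each infinitesimal type into $\mathbb{R}$ recovers the associated Weil algebra, so it suffices to write down the corresponding square of Weil algebras and to verify that it is a pullback of $\mathbb{R}$-algebras.

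First I would identify the four Weil algebras. The spectra $D^{2}\left\{ \left( 1,2\right) \right\}$, $D^{2}$ and $D^{3}\left\{ \left( 1,3\right) ,\left( 2,3\right) \right\}$ correspond respectively to
\[
\mathfrak{W}_{1}=\mathbb{R}\left[ X,Y\right] /\left( X^{2},Y^{2},XY\right) ,\qquad \mathfrak{W}_{2}=\mathbb{R}\left[ X,Y\right] /\left( X^{2},Y^{2}\right) ,
\]
\[
\mathfrak{W}_{3}=\mathbb{R}\left[ X,Y,Z\right] /\left( X^{2},Y^{2},Z^{2},XZ,YZ\right) .
\]
Applying $\rightarrow\mathbb{R}$ turns the four maps of the lemma into algebra homomorphisms in the opposite direction: each of the two arrows $D^{2}\left\{ \left( 1,2\right) \right\} \rightarrow D^{2}$ dualizes to the quotient map $\mathfrak{W}_{2}\rightarrow\mathfrak{W}_{1}$ adjoining the relation $XY=0$; the diagonal arrow $\lambda_{\left( d_{1},d_{2}\right) }\left( d_{1},d_{2},d_{1}d_{2}\right)$ dualizes to $\mathfrak{W}_{3}\rightarrow\mathfrak{W}_{2}$ sending $X\mapsto X$, $Y\mapsto Y$, $Z\mapsto XY$; and the arrow $\lambda_{\left( d_{1},d_{2}\right) }\left( d_{1},d_{2},0\right)$ dualizes to $\mathfrak{W}_{3}\rightarrow\mathfrak{W}_{2}$ sending $X\mapsto X$, $Y\mapsto Y$, $Z\mapsto 0$. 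The target is thus the square with $\mathfrak{W}_{3}$ in the apex position mapping to two copies of $\mathfrak{W}_{2}$ over $\mathfrak{W}_{1}$, and I want this to be a pullback.

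The core of the argument is a short linear-algebra verification over $\mathbb{R}$. I would record the $\mathbb{R}$-bases $\left\{ 1,X,Y,XY\right\}$ of $\mathfrak{W}_{2}$, $\left\{ 1,X,Y\right\}$ of $\mathfrak{W}_{1}$, and $\left\{ 1,X,Y,Z,XY\right\}$ of $\mathfrak{W}_{3}$, so that the fibered product $\mathfrak{W}_{2}\times_{\mathfrak{W}_{1}}\mathfrak{W}_{2}$ consists of pairs agreeing in their $1,X,Y$ coordinates and is therefore five-dimensional, the two $XY$-coordinates being free. Evaluating the canonical comparison homomorphism $\mathfrak{W}_{3}\rightarrow\mathfrak{W}_{2}\times_{\mathfrak{W}_{1}}\mathfrak{W}_{2}$ on the basis of $\mathfrak{W}_{3}$ then yields $Z\mapsto\left( XY,0\right)$ and $XY\mapsto\left( XY,XY\right)$, while $1,X,Y$ land on the diagonal. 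These five images are linearly independent and span, so the comparison map is an isomorphism, which is exactly the pullback condition.

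The step I expect to demand the most care is pinning down the bases, and in particular confirming that $\mathfrak{W}_{3}$ harbors no spurious surviving monomials: one must check that the relations $X^{2}=Y^{2}=Z^{2}=XZ=YZ=0$ annihilate every degree-two monomial except $XY$ and force $XYZ=\left( XZ\right) Y=0$ in degree three, so that $\dim_{\mathbb{R}}\mathfrak{W}_{3}=5$ matches the dimension of the fibered product. Once the dimension count and the explicit images agree, the isomorphism, and hence the quasi-colimit property, is immediate; the only conceptual subtlety is the appeal to the higher-inductive construction of finitely presented $\mathbb{R}$-algebras that legitimizes treating these presentations as ordinary finite-dimensional $\mathbb{R}$-algebras.
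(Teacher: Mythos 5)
Your proof is correct, but note a point of context: the paper itself offers no proof of this lemma at all — it is ``recalled'' from Lavendhomme (the first Lemma of \S 3.4 of \cite{la}), so your argument fills a gap rather than parallels a given one. The closest material in the paper is its proofs of the sibling results (Lemmas \ref{l5.2}, \ref{l5.3}, \ref{l5.5}), which all run on the function side: take arbitrary compatible maps $\gamma^{i}:D^{2}\rightarrow\mathbb{R}$, expand them by Axiom \ref{a1.1} as polynomials $a^{i}+a_{1}^{i}d_{1}+a_{2}^{i}d_{2}+a_{12}^{i}d_{1}d_{2}$, extract coefficient identities from the compatibility conditions, and then exhibit explicit coefficients $b,b_{1},\dots$ of a function on the apex object whose restrictions recover the $\gamma^{i}$; this verifies (essentially only the existence half of) the limit property of the diagram after applying $\rightarrow\mathbb{R}$. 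You instead dualize all the way to the Weil algebras themselves, $\mathfrak{W}_{1}=\mathbb{R}[X,Y]/(X^{2},Y^{2},XY)$, $\mathfrak{W}_{2}=\mathbb{R}[X,Y]/(X^{2},Y^{2})$, $\mathfrak{W}_{3}=\mathbb{R}[X,Y,Z]/(X^{2},Y^{2},Z^{2},XZ,YZ)$, and check that the canonical map $\mathfrak{W}_{3}\rightarrow\mathfrak{W}_{2}\times_{\mathfrak{W}_{1}}\mathfrak{W}_{2}$ is an isomorphism by matching the basis $\{1,X,Y,Z,XY\}$ against the five-dimensional fibered product; your identification of the dual arrows ($Z\mapsto XY$ and $Z\mapsto 0$, and the two quotient maps) is right, and the images $(1,1),(X,X),(Y,Y),(XY,0),(XY,XY)$ indeed span, since $(0,XY)=(XY,XY)-(XY,0)$. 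The two routes are mathematically equivalent — both reduce the claim, via the Remark after Axiom \ref{a1.1}, to a finite-dimensional linear-algebra check — but yours buys a cleaner package: the dimension count delivers injectivity and surjectivity of the comparison map simultaneously, i.e.\ both the uniqueness and existence halves of the limit property, whereas the paper's coefficient-chasing style leaves uniqueness implicit and would require naming eight scalars to say the same thing.
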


\begin{corollary}
\label{cl5.1}Let $M$\ be a microlinear set. Given $\theta_{1},\theta_{2}%
:D^{2}\rightarrow M$ with
\[
\theta_{1}\circ\left(  \lambda_{\left(  d_{1},d_{2}\right)  :D^{2}\left\{
\left(  1,2\right)  \right\}  }\left(  d_{1},d_{2}\right)  \right)
=\theta_{2}\circ\left(  \lambda_{\left(  d_{1},d_{2}\right)  :D^{2}\left\{
\left(  1,2\right)  \right\}  }\left(  d_{1},d_{2}\right)  \right)
\]
there exists $\mathfrak{m}_{\left(  \theta_{1},\theta_{2}\right)  }%
:D^{3}\left\{  \left(  1,3\right)  ,\left(  2,3\right)  \right\}  \rightarrow
M$ with
\begin{align*}
\mathfrak{m}_{\left(  \theta_{1},\theta_{2}\right)  }\circ\left(
\lambda_{\left(  d_{1},d_{2}\right)  :D^{2}}\left(  d_{1},d_{2},0\right)
\right)   &  =\theta_{2}\\
\mathfrak{m}_{\left(  \theta_{1},\theta_{2}\right)  }\circ\left(
\lambda_{\left(  d_{1},d_{2}\right)  :D^{2}}\left(  d_{1},d_{2},d_{1}%
d_{2}\right)  \right)   &  =\theta_{1}%
\end{align*}

\end{corollary}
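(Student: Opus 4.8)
The plan is to deduce the statement directly from Lemma \ref{l5.l} together with the definition of microlinearity, exactly in the manner in which Corollaries \ref{cl4.1}, \ref{cl4.2} and \ref{cl6.1} follow from their respective quasi-colimit lemmas. Since $M$ is assumed to be a microlinear \emph{set}, its set truncation $\left\Vert M\right\Vert_0$ is equivalent to $M$ itself, so I may exponentiate quasi-colimit diagrams directly over $M$ rather than over $\left\Vert M\right\Vert_0$.

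First I would invoke Lemma \ref{l5.l}, which asserts that the displayed pushout square is a quasi-colimit diagram of small objects. By the definition of microlinearity the exponentiation of this diagram over $M$ is a limit diagram; since the contravariant functor $(-)\rightarrow M$ reverses all arrows of the pushout square, this limit diagram is precisely the pullback square
\[
\begin{array}{ccc}
\left(D^3\{(1,3),(2,3)\}\rightarrow M\right) & \rightarrow & \left(D^2\rightarrow M\right) \\
\downarrow & & \downarrow \\
\left(D^2\rightarrow M\right) & \rightarrow & \left(D^2\{(1,2)\}\rightarrow M\right)
\end{array}
\]
in which the top map is precomposition with $\lambda_{(d_1,d_2):D^2}(d_1,d_2,d_1d_2)$, the left map is precomposition with $\lambda_{(d_1,d_2):D^2}(d_1,d_2,0)$, and both remaining maps into the corner are precomposition with the diagonal inclusion $\iota:\equiv\lambda_{(d_1,d_2):D^2\{(1,2)\}}(d_1,d_2)$.

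Next I would unwind what it means for this square to be a pullback of types: the type $D^3\{(1,3),(2,3)\}\rightarrow M$ is then equivalent to the type of triples consisting of maps $\theta_1,\theta_2:D^2\rightarrow M$ together with an identification $\theta_1\circ\iota=\theta_2\circ\iota$. The hypothesis of the corollary supplies exactly such a triple, the assumed equation being the compatibility datum. Passing this triple through the equivalence produces the desired $\mathfrak{m}_{(\theta_1,\theta_2)}:D^3\{(1,3),(2,3)\}\rightarrow M$, and the two required equations
\[
\mathfrak{m}_{(\theta_1,\theta_2)}\circ\left(\lambda_{(d_1,d_2):D^2}(d_1,d_2,0)\right)=\theta_2,\qquad \mathfrak{m}_{(\theta_1,\theta_2)}\circ\left(\lambda_{(d_1,d_2):D^2}(d_1,d_2,d_1d_2)\right)=\theta_1
\]
are nothing but the two projection conditions of the pullback. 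The pullback in fact furnishes a homotopically unique such $\mathfrak{m}_{(\theta_1,\theta_2)}$, strengthening the bare existence asserted.

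The argument is essentially formal, so the only genuine care needed is bookkeeping with the orientation of the two legs: I must check that precomposition with $(d_1,d_2,0)$ recovers $\theta_2$ while precomposition with $(d_1,d_2,d_1d_2)$ recovers $\theta_1$, and that the hypothesis restricts along the same inclusion $\iota$ that appears as the shared corner of the square. Matching these labels correctly is the one place where an error could slip in; everything else follows mechanically from Lemma \ref{l5.l} and the definition of microlinearity.
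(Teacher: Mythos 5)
Your proposal is correct and takes essentially the same route the paper intends: Corollary \ref{cl5.1} is stated there without proof, as an immediate consequence of Lemma \ref{l5.l} together with the definition of microlinearity, and your argument is exactly that deduction made explicit. Your bookkeeping is also accurate --- using $\left\Vert M\right\Vert _{0}\backsimeq M$ for a set $M$, reading the exponentiated square as a pullback, and matching $\theta_{1}$ with precomposition by $\lambda_{\left(  d_{1},d_{2}\right)  :D^{2}}\left(  d_{1},d_{2},d_{1}d_{2}\right)$ and $\theta_{2}$ with precomposition by $\lambda_{\left(  d_{1},d_{2}\right)  :D^{2}}\left(  d_{1},d_{2},0\right)$, in agreement with the statement.
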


Now we define strong differences.

\begin{definition}
Let $M$\ be a microlinear set. Given $\theta_{1},\theta_{2}:D^{2}\rightarrow
M$ with
\[
\theta_{1}\circ\left(  \lambda_{\left(  d_{1},d_{2}\right)  :D^{2}\left\{
\left(  1,2\right)  \right\}  }\left(  d_{1},d_{2}\right)  \right)
=\theta_{2}\circ\left(  \lambda_{\left(  d_{1},d_{2}\right)  :D^{2}\left\{
\left(  1,2\right)  \right\}  }\left(  d_{1},d_{2}\right)  \right)
\]
we define $\theta_{1}\overset{\cdot}{-}\theta_{2}:D\rightarrow M$ to be
\[
\theta_{1}\overset{\cdot}{-}\theta_{2}:\equiv\lambda_{d:D}\mathfrak{m}%
_{\left(  \theta_{1},\theta_{2}\right)  }\left(  0,0,d\right)
\]

\end{definition}

We recall that

\begin{proposition}
\label{p5.1}(cf. Proposition 8 of \S 3.4 in \cite{la}) Let $M$\ be a
microlinear set. Given $\theta_{1},\theta_{2}:D^{2}\rightarrow M$ with
\[
\theta_{1}\circ\left(  \lambda_{\left(  d_{1},d_{2}\right)  :D^{2}\left\{
\left(  1,2\right)  \right\}  }\left(  d_{1},d_{2}\right)  \right)
=\theta_{2}\circ\left(  \lambda_{\left(  d_{1},d_{2}\right)  :D^{2}\left\{
\left(  1,2\right)  \right\}  }\left(  d_{1},d_{2}\right)  \right)
\]
we have
\begin{align*}
&  \theta_{1}\circ\left(  \lambda_{\left(  d_{1},d_{2}\right)  :D^{2}}\left(
d_{2},d_{1}\right)  \right)  \circ\left(  \lambda_{\left(  d_{1},d_{2}\right)
:D^{2}\left\{  \left(  1,2\right)  \right\}  }\left(  d_{1},d_{2}\right)
\right) \\
&  =\theta_{2}\circ\left(  \lambda_{\left(  d_{1},d_{2}\right)  :D^{2}}\left(
d_{2},d_{1}\right)  \right)  \circ\left(  \lambda_{\left(  d_{1},d_{2}\right)
:D^{2}\left\{  \left(  1,2\right)  \right\}  }\left(  d_{1},d_{2}\right)
\right)
\end{align*}
and
\begin{align*}
&  \theta_{1}\circ\left(  \lambda_{\left(  d_{1},d_{2}\right)  :D^{2}}\left(
d_{2},d_{1}\right)  \right)  \overset{\cdot}{-}\theta_{2}\circ\left(
\lambda_{\left(  d_{1},d_{2}\right)  :D^{2}}\left(  d_{2},d_{1}\right)
\right) \\
&  =\theta_{1}\overset{\cdot}{-}\theta_{2}%
\end{align*}

\end{proposition}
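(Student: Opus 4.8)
The plan is to exploit the symmetry of the defining data under interchange of the first two infinitesimal coordinates. Write $\sigma:\equiv\lambda_{\left(d_{1},d_{2}\right):D^{2}}\left(d_{2},d_{1}\right)$ for the coordinate swap on $D^{2}$, and let $\iota$ denote the inclusion $\lambda_{\left(d_{1},d_{2}\right):D^{2}\left\{\left(1,2\right)\right\}}\left(d_{1},d_{2}\right)$ of $D^{2}\left\{\left(1,2\right)\right\}$ into $D^{2}$. The first observation is that $\sigma$ restricts to $D^{2}\left\{\left(1,2\right)\right\}$, since the constraint $d_{1}d_{2}=0$ is symmetric in $d_{1}$ and $d_{2}$; writing $\sigma'$ for this restriction we have $\sigma\circ\iota=\iota\circ\sigma'$. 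Precomposing the hypothesis $\theta_{1}\circ\iota=\theta_{2}\circ\iota$ with $\sigma'$ then yields $\theta_{1}\circ\sigma\circ\iota=\theta_{2}\circ\sigma\circ\iota$, which is exactly the first asserted identity; in particular $\theta_{1}\circ\sigma$ and $\theta_{2}\circ\sigma$ agree on $D^{2}\left\{\left(1,2\right)\right\}$, so that their strong difference is defined.

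For the second identity I would introduce the automorphism $\tau:\equiv\lambda_{\left(d_{1},d_{2},d_{3}\right):D^{3}\left\{\left(1,3\right),\left(2,3\right)\right\}}\left(d_{2},d_{1},d_{3}\right)$ of $D^{3}\left\{\left(1,3\right),\left(2,3\right)\right\}$ that swaps the first two coordinates while fixing the third. This is well defined because interchanging $d_{1}$ and $d_{2}$ merely interchanges the two constraints $d_{1}d_{3}=0$ and $d_{2}d_{3}=0$. The key claim is then
\[
\mathfrak{m}_{\left(\theta_{1}\circ\sigma,\theta_{2}\circ\sigma\right)}=\mathfrak{m}_{\left(\theta_{1},\theta_{2}\right)}\circ\tau .
\]

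To prove this I would check that the right-hand side satisfies the two equations characterising $\mathfrak{m}_{\left(\theta_{1}\circ\sigma,\theta_{2}\circ\sigma\right)}$ in Corollary \ref{cl5.1}. Precomposing with $\lambda_{\left(d_{1},d_{2}\right):D^{2}}\left(d_{1},d_{2},0\right)$ and using $\tau\left(d_{1},d_{2},0\right)=\left(d_{2},d_{1},0\right)$ together with the first defining equation of $\mathfrak{m}_{\left(\theta_{1},\theta_{2}\right)}$ returns $\theta_{2}\circ\sigma$; precomposing with $\lambda_{\left(d_{1},d_{2}\right):D^{2}}\left(d_{1},d_{2},d_{1}d_{2}\right)$ and using $\tau\left(d_{1},d_{2},d_{1}d_{2}\right)=\left(d_{2},d_{1},d_{2}d_{1}\right)$, where $d_{1}d_{2}=d_{2}d_{1}$ by commutativity of $\mathbb{R}$, together with the second defining equation returns $\theta_{1}\circ\sigma$. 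The homotopical uniqueness clause of Corollary \ref{cl5.1} then forces the displayed identity. Since $\tau\left(0,0,d\right)=\left(0,0,d\right)$, evaluating both sides along $\lambda_{d:D}\left(0,0,d\right)$ gives $\left(\theta_{1}\circ\sigma\right)\overset{\cdot}{-}\left(\theta_{2}\circ\sigma\right)=\theta_{1}\overset{\cdot}{-}\theta_{2}$.

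The one point deserving care, and where I expect the only real chance of error, is keeping straight which leg of the quasi-colimit of Lemma \ref{l5.l} carries which datum: the leg $\left(d_{1},d_{2},0\right)$ recovers $\theta_{2}$ whereas the diagonal leg $\left(d_{1},d_{2},d_{1}d_{2}\right)$ recovers $\theta_{1}$. One must therefore verify that $\tau$ leaves the third coordinate untouched and that the product $d_{1}d_{2}$ is invariant under the swap, so that $\tau$ carries each leg to itself after the interchange. Granting this, the argument is entirely formal, resting on the well-definedness of $\tau$ and the uniqueness part of Corollary \ref{cl5.1}.
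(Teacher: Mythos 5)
Your proposal is correct and follows essentially the same route as the paper: the paper's proof likewise reduces everything to the single identity $\mathfrak{m}_{\left(\theta_{1}\circ\sigma,\theta_{2}\circ\sigma\right)}=\mathfrak{m}_{\left(\theta_{1},\theta_{2}\right)}\circ\tau$ (stated there as "it suffices to note that"), with the first identity dismissed as obvious. You merely supply the verification the paper leaves implicit, namely checking the two defining equations of Corollary \ref{cl5.1} against the swapped legs and invoking the uniqueness coming from the quasi-colimit of Lemma \ref{l5.l}, and you correctly keep track of which leg carries $\theta_{1}$ and which carries $\theta_{2}$.
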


\begin{proof}
The first identity should be obvious. For the second identity, it suffices to
note that%
\[
\mathfrak{m}_{\left(  \theta_{1}\circ\left(  \lambda_{\left(  d_{1}%
,d_{2}\right)  :D^{2}}\left(  d_{2},d_{1}\right)  \right)  ,\theta_{2}%
\circ\left(  \lambda_{\left(  d_{1},d_{2}\right)  :D^{2}}\left(  d_{2}%
,d_{1}\right)  \right)  \right)  }=\lambda_{\left(  d_{1},d_{2},d_{3}\right)
:D^{3}\left\{  \left(  1,3\right)  ,\left(  2,3\right)  \right\}
}\mathfrak{m}_{\left(  \theta_{1},\theta_{2}\right)  }\left(  d_{2}%
,d_{1},d_{3}\right)
\]

\end{proof}

\begin{definition}
Let $M$\ be a microlinear set. We give two definitions:

\begin{itemize}
\item Given $\theta_{1},\theta_{2}:D^{2}\rightarrow M$ with
\[
\theta_{1}\circ\left(  \lambda_{d:D}\left(  0,d\right)  \right)  =\theta
_{2}\circ\left(  \lambda_{d:D}\left(  0,d\right)  \right)
\]
we define $\theta_{1}\underset{1}{+}\theta_{2}:D^{2}\rightarrow M$\ to be
\[
\theta_{1}\underset{1}{+}\theta_{2}:\equiv\lambda_{\left(  d_{1},d_{2}\right)
:D^{2}}\left(  \lambda_{d_{1}:D}\lambda_{d_{2}:D}\theta_{1}\left(  d_{1}%
,d_{2}\right)  +\lambda_{d_{1}:D}\lambda_{d_{2}:D}\theta_{2}\left(
d_{1},d_{2}\right)  \right)  \left(  d_{1}\right)  \left(  d_{2}\right)
\]

\item Given $\alpha:\mathbb{R}$ and $\theta:D^{2}\rightarrow M$, we define
$\alpha\underset{1}{\cdot}\theta$\ to be
\[
\alpha\underset{1}{\cdot}\theta:\equiv\lambda_{\left(  d_{1},d_{2}\right)
:D^{2}}\left(  \alpha\left(  \lambda_{d_{1}:D}\lambda_{d_{2}:D}\theta\left(
d_{1},d_{2}\right)  \right)  \right)  \left(  d_{1}\right)  \left(
d_{2}\right)
\]

\end{itemize}
\end{definition}

\begin{lemma}
\label{l5.2}The diagram consisting of
\begin{align*}
\mathcal{N}  &  :\equiv D^{5}\left\{  \left(  1,2\right)  ,\left(  1,4\right)
,\left(  1,5\right)  ,\left(  2,4\right)  ,\left(  2,5\right)  ,\left(
4,5\right)  \right\} \\
\mathcal{L}_{11}  &  :\equiv D^{2},\mathcal{L}_{12}:\equiv D^{2}%
,\mathcal{L}_{21}:\equiv D^{2},\mathcal{L}_{22}:\equiv D^{2}\\
\mathcal{P}_{1}  &  :\equiv D^{2}\left\{  \left(  1,2\right)  \right\}
,\mathcal{P}_{2}:\equiv D^{2}\left\{  \left(  1,2\right)  \right\} \\
\mathcal{Q}_{1}  &  :\equiv D,\mathcal{Q}_{2}:\equiv D\\
\lambda_{\left(  d_{1},d_{2}\right)  :D^{2}}\left(  d_{1},0,d_{2},d_{1}%
d_{2},0\right)   &  :\mathcal{L}_{11}\rightarrow\mathcal{N},\lambda_{\left(
d_{1},d_{2}\right)  :D^{2}}\left(  d_{1},0,d_{2},0,0\right)  :\mathcal{L}%
_{21}\rightarrow\mathcal{N}\\
\lambda_{\left(  d_{1},d_{2}\right)  :D^{2}}\left(  0,d_{1},d_{2},0,d_{1}%
d_{2}\right)   &  :\mathcal{L}_{12}\rightarrow\mathcal{N},\lambda_{\left(
d_{1},d_{2}\right)  :D^{2}}\left(  0,d_{1},d_{2},0,0\right)  :\mathcal{L}%
_{22}\rightarrow\mathcal{N}\\
\lambda_{\left(  d_{1},d_{2}\right)  :D^{2}\left\{  \left(  1,2\right)
\right\}  }\left(  d_{1},d_{2}\right)   &  :\mathcal{P}_{1}\rightarrow
\mathcal{L}_{11},\lambda_{\left(  d_{1},d_{2}\right)  :D^{2}\left\{  \left(
1,2\right)  \right\}  }\left(  d_{1},d_{2}\right)  :\mathcal{P}_{1}%
\rightarrow\mathcal{L}_{21}\\
\lambda_{\left(  d_{1},d_{2}\right)  :D^{2}\left\{  \left(  1,2\right)
\right\}  }\left(  d_{1},d_{2}\right)   &  :\mathcal{P}_{2}\rightarrow
\mathcal{L}_{12},\lambda_{\left(  d_{1},d_{2}\right)  :D^{2}\left\{  \left(
1,2\right)  \right\}  }\left(  d_{1},d_{2}\right)  :\mathcal{P}_{2}%
\rightarrow\mathcal{L}_{22}\\
\lambda_{d:D}\left(  0,d\right)   &  :\mathcal{Q}_{1}\rightarrow
\mathcal{L}_{11},\lambda_{d:D}\left(  0,d\right)  :\mathcal{Q}_{1}%
\rightarrow\mathcal{L}_{12}\\
\lambda_{d:D}\left(  0,d\right)   &  :\mathcal{Q}_{2}\rightarrow
\mathcal{L}_{21},\lambda_{d:D}\left(  0,d\right)  :\mathcal{Q}_{2}%
\rightarrow\mathcal{L}_{22}%
\end{align*}
is a quasi-colimit diagram.
\end{lemma}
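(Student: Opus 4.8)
The plan is to pass to Weil algebras and check that the dual diagram is a limit. By the Remark following Axiom \ref{a1.1} (equivalently, by the characterization of quasi-colimit diagrams of small objects recorded in \S \ref{s3}), the displayed diagram of infinitesimal types is a quasi-colimit diagram precisely when the diagram obtained by applying the contravariant functor $\rightarrow\mathbb{R}$ is a limit diagram of $\mathbb{R}$-algebras; and by Axiom \ref{a1.1} this functor returns each $\mathrm{Spec}_{\mathbb{R}}\mathfrak{W}$ to its Weil algebra $\mathfrak{W}$. So I first record the Weil algebras of the nodes,
$$\mathfrak{W}_{\mathcal{N}}=\mathbb{R}[X_1,\dots,X_5]/\mathfrak{a},\quad\mathfrak{W}_{\mathcal{L}}=\mathbb{R}[X,Y]/(X^2,Y^2),$$
$$\mathfrak{W}_{\mathcal{P}}=\mathbb{R}[X,Y]/(X^2,Y^2,XY),\quad\mathfrak{W}_{\mathcal{Q}}=\mathbb{R}[X]/(X^2),$$
where $\mathfrak{a}$ is generated by all $X_i^2$ together with the products $X_iX_j$ for the listed pairs. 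Each arrow of infinitesimal types dualizes to the evident $\mathbb{R}$-algebra homomorphism in the opposite direction, obtained by substituting the coordinate formulas; for instance the leg $\mathcal{L}_{11}\rightarrow\mathcal{N}$ dualizes to $X_1\mapsto X$, $X_3\mapsto Y$, $X_4\mapsto XY$, and $X_2,X_5\mapsto 0$.

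Next I would identify the limit of the base diagram explicitly. It is the subalgebra of $\mathfrak{W}_{\mathcal{L}_{11}}\times\mathfrak{W}_{\mathcal{L}_{12}}\times\mathfrak{W}_{\mathcal{L}_{21}}\times\mathfrak{W}_{\mathcal{L}_{22}}$ cut out by the four equalizing conditions: the $\mathcal{P}_1,\mathcal{P}_2$ legs force the two $\mathcal{L}$-entries in each row to agree after killing $XY$, while the $\mathcal{Q}_1,\mathcal{Q}_2$ legs force the two columns to agree after killing the first generator. Writing a general element of each $\mathfrak{W}_{\mathcal{L}_{ij}}$ as $\alpha_{ij}+\beta_{ij}X+\gamma_{ij}Y+\delta_{ij}XY$, I would translate the four compatibility conditions into linear equations among these coefficients, thereby presenting the limit by explicit coordinates, and then consider the canonical comparison homomorphism $\mathfrak{W}_{\mathcal{N}}\rightarrow\lim$ supplied by the universal property. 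The whole content of the lemma is that this comparison map is an isomorphism of $\mathbb{R}$-algebras.

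I would establish bijectivity on the monomial basis of $\mathfrak{W}_{\mathcal{N}}$: each surviving monomial must be detected by at least one leg (injectivity) and every compatible tuple of $\mathcal{L}$-entries must be realized (surjectivity). The delicate point, and what I expect to be the main obstacle, is the quadratic part: the two ``product'' generators $X_4,X_5$, being the images of $d_1d_2$ under the $\mathcal{L}_{11}$- and $\mathcal{L}_{12}$-legs, must be matched carefully against the genuine products $X_1X_3,X_2X_3$ in the fibre coordinates, and it is precisely the compatibility imposed jointly by the $\mathcal{P}$- and $\mathcal{Q}$-legs that is supposed to pin all of these down; getting this matching exactly right (neither losing a monomial nor admitting a spurious one) is where the bookkeeping is easy to slip. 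A cleaner route that sidesteps the brute-force count is to paste: one exhibits the big square as an iterated limit assembled from copies of the elementary strong-difference square of Lemma \ref{l5.l} (in the direction that records $d_1d_2$) glued along the tangent-type squares underlying Lemma \ref{l4.1}. Since limits of Weil algebras compose --- the same ``double limits commute'' principle used in the proof of Proposition \ref{p3.1} --- displaying the diagram as such an iterated limit of already-established quasi-colimits yields the claim, and I would prefer this assembly to a direct computation.
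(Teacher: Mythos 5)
Your first route is, in outline, exactly the paper's own proof: by Axiom \ref{a1.1} one expands each $\gamma^{ij}:D^{2}\rightarrow\mathbb{R}$ as $a^{ij}+a_{1}^{ij}d_{1}+a_{2}^{ij}d_{2}+a_{12}^{ij}d_{1}d_{2}$, converts the four compatibility conditions into linear relations among these coefficients, and exhibits an element over $\mathcal{N}$ restricting correctly along the four legs. The one real difference is that the paper stops there: it proves only that every compatible quadruple extends (surjectivity of the comparison homomorphism onto the limit) and never addresses uniqueness. Your insistence on checking injectivity as well --- ``neither losing a monomial nor admitting a spurious one'' --- is the correct reading of what a limit diagram requires.

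That check, however, fails for $\mathcal{N}$ as printed, so the point you singled out as delicate is a genuine obstruction rather than bookkeeping. Since $(3,4)$ and $(3,5)$ are absent from the relation list, the monomials $d_{3}d_{4}$ and $d_{3}d_{5}$ are nonzero in the Weil algebra of $\mathcal{N}$, whose basis is $1$, $X_{1},\dots,X_{5}$, $X_{1}X_{3}$, $X_{2}X_{3}$, $X_{3}X_{4}$, $X_{3}X_{5}$ (dimension $10$); yet both restrict to $0$ along all four legs: along $\mathcal{L}_{11}\rightarrow\mathcal{N}$ one has $d_{3}d_{4}\mapsto d_{2}\cdot d_{1}d_{2}=0$, along $\mathcal{L}_{12}\rightarrow\mathcal{N}$ one has $d_{3}d_{5}\mapsto d_{2}\cdot d_{1}d_{2}=0$, and the remaining legs send $d_{4}$, respectively $d_{5}$, to $0$. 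The limit itself is $8$-dimensional (the paper's own relations leave free one common constant, one common $d_{2}$-coefficient, the two row-wise $d_{1}$-coefficients $a_{1}^{11}=a_{1}^{21}$ and $a_{1}^{12}=a_{1}^{22}$, and the four coefficients $a_{12}^{ij}$). Hence the comparison map is surjective with a $2$-dimensional kernel, and the displayed diagram is not a quasi-colimit diagram. The lemma becomes true --- and your bijectivity plan goes through, e.g.\ via the triangular change of variables $b_{13}=a_{12}^{21}$, $b_{4}=a_{12}^{11}-a_{12}^{21}$, $b_{23}=a_{12}^{22}$, $b_{5}=a_{12}^{12}-a_{12}^{22}$ --- exactly when $(3,4)$ and $(3,5)$ are added to the defining relations of $\mathcal{N}$. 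This repair is harmless downstream: Corollary \ref{cl5.2} and Proposition \ref{p5.2} use only the existence half, and the substitutions $(d_{1},d_{1},d_{2},d_{3},d_{3})$ and $(0,0,0,d_{1},d_{2})$ made there still land in the corrected $\mathcal{N}$.

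Your preferred pasting alternative also cannot be assembled from the lemmas you cite. After gluing the two rows via Lemma \ref{l5.l}, the remaining gluing identifies two copies of $D^{3}\left\{ \left( 1,3\right) ,\left( 2,3\right) \right\}$ along a single copy of $D$ embedded by $d\mapsto\left( 0,d,0\right)$ (the two $\mathcal{Q}$'s induce the same pair of maps after the row gluings); this is not an instance of Lemma \ref{l4.1}, which glues two copies of $D$ at a point, so it needs its own Kock--Lawvere computation. Carried out, that computation returns the $8$-dimensional limit above, not the $10$-dimensional Weil algebra of $\mathcal{N}$, which confirms the discrepancy rather than removing it.
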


\begin{proof}
By Axiom \ref{a1.1} we are sure that, given
\[
\gamma^{11},\gamma^{12},\gamma^{21},\gamma^{22}:D^{2}\rightarrow\mathbb{R}%
\]
there exist
\[
a^{11},a_{1}^{11},a_{2}^{11},a_{12}^{11},a^{12},a_{1}^{12},a_{2}^{12}%
,a_{12}^{12},a^{21},a_{1}^{21},a_{2}^{21},a_{12}^{21},a^{22},a_{1}^{22}%
,a_{2}^{22},a_{12}^{22}:\mathbb{R}%
\]
such that
\begin{align*}
&  \lambda_{\left(  d_{1},d_{2}\right)  :D^{2}}\gamma^{11}\left(  d_{1}%
,d_{2}\right) \\
&  =\lambda_{\left(  d_{1},d_{2}\right)  :D^{2}}\left(  a^{11}+a_{1}^{11}%
d_{1}+a_{2}^{11}d_{2}+a_{12}^{11}d_{1}d_{2}\right) \\
&  \lambda_{\left(  d_{1},d_{2}\right)  :D^{2}}\gamma^{12}\left(  d_{1}%
,d_{2}\right) \\
&  =\lambda_{\left(  d_{1},d_{2}\right)  :D^{2}}\left(  a^{12}+a_{1}^{12}%
d_{1}+a_{2}^{12}d_{2}+a_{12}^{12}d_{1}d_{2}\right) \\
&  \lambda_{\left(  d_{1},d_{2}\right)  :D^{2}}\gamma^{21}\left(  d_{1}%
,d_{2}\right) \\
&  =\lambda_{\left(  d_{1},d_{2}\right)  :D^{2}}\left(  a^{21}+a_{1}^{21}%
d_{1}+a_{2}^{21}d_{2}+a_{12}^{21}d_{1}d_{2}\right) \\
&  \lambda_{\left(  d_{1},d_{2}\right)  :D^{2}}\gamma^{22}\left(  d_{1}%
,d_{2}\right) \\
&  =\lambda_{\left(  d_{1},d_{2}\right)  :D^{2}}\left(  a^{22}+a_{1}^{22}%
d_{1}+a_{2}^{22}d_{2}+a_{12}^{22}d_{1}d_{2}\right)
\end{align*}
The conditions
\begin{align*}
\gamma^{11}\circ\left(  \lambda_{\left(  d_{1},d_{2}\right)  ::D^{2}\left\{
\left(  1,2\right)  \right\}  }\left(  d_{1},d_{2}\right)  \right)   &
=\gamma^{21}\circ\left(  \lambda_{\left(  d_{1},d_{2}\right)  ::D^{2}\left\{
\left(  1,2\right)  \right\}  }\left(  d_{1},d_{2}\right)  \right) \\
\gamma^{12}\circ\left(  \lambda_{\left(  d_{1},d_{2}\right)  ::D^{2}\left\{
\left(  1,2\right)  \right\}  }\left(  d_{1},d_{2}\right)  \right)   &
=\gamma^{22}\circ\left(  \lambda_{\left(  d_{1},d_{2}\right)  ::D^{2}\left\{
\left(  1,2\right)  \right\}  }\left(  d_{1},d_{2}\right)  \right)
\end{align*}
imply that
\begin{align*}
a^{11}  &  =a^{21},a_{1}^{11}=a_{1}^{21},a_{2}^{11}=a_{2}^{21}\\
a^{12}  &  =a^{22},a_{1}^{12}=a_{1}^{22},a_{2}^{12}=a_{2}^{22}%
\end{align*}
The conditions
\begin{align*}
\gamma^{11}\circ\left(  \lambda_{d:D}\left(  0,d\right)  \right)   &
=\gamma^{12}\circ\left(  \lambda_{d:D}\left(  0,d\right)  \right) \\
\gamma^{21}\circ\left(  \lambda_{d:D}\left(  0,d\right)  \right)   &
=\gamma^{22}\circ\left(  \lambda_{d:D}\left(  0,d\right)  \right)
\end{align*}
imply that
\begin{align*}
a^{11}  &  =a^{12},a_{2}^{11}=a_{2}^{12}\\
a^{21}  &  =a^{22},a_{2}^{21}=a_{2}^{22}%
\end{align*}
Therefore we have
\begin{align*}
a^{11}  &  =a^{12}=a^{21}=a^{22}\\
a_{1}^{11}  &  =a_{1}^{21},a_{1}^{12}=a_{1}^{22}\\
a_{2}^{11}  &  =a_{2}^{12}=a_{2}^{21}=a_{2}^{22}%
\end{align*}
which implies that there exist
\[
b,b_{1},b_{2},b_{3},b_{13},b_{23},b_{4},b_{5}:\mathbb{R}%
\]
such that
\begin{align*}
&  \left(  \lambda_{\left(  d_{1},d_{2},d_{3},d_{4},d_{5}\right)  :N}%
b+b_{1}d_{1}+b_{2}d_{2}+b_{3}d_{3}+b_{13}d_{1}d_{3}+b_{23}d_{2}d_{3}%
+b_{4}d_{4}+b_{5}d_{5}\right) \\
&  \circ\left(  \lambda_{\left(  d_{1},d_{2}\right)  :D^{2}}\left(
d_{1},0,d_{2},d_{1}d_{2},0\right)  \right) \\
&  =\gamma^{11}\\
&  \left(  \lambda_{\left(  d_{1},d_{2},d_{3},d_{4},d_{5}\right)  :N}%
b+b_{1}d_{1}+b_{2}d_{2}+b_{3}d_{3}+b_{13}d_{1}d_{3}+b_{23}d_{2}d_{3}%
+b_{4}d_{4}+b_{5}d_{5}\right) \\
&  \circ\left(  \lambda_{\left(  d_{1},d_{2}\right)  :D^{2}}\left(
d_{1},0,d_{2},0,0\right)  \right) \\
&  =\gamma^{21}\\
&  \left(  \lambda_{\left(  d_{1},d_{2},d_{3},d_{4},d_{5}\right)  :N}%
b+b_{1}d_{1}+b_{2}d_{2}+b_{3}d_{3}+b_{13}d_{1}d_{3}+b_{23}d_{2}d_{3}%
+b_{4}d_{4}+b_{5}d_{5}\right) \\
&  \circ\left(  \lambda_{\left(  d_{1},d_{2}\right)  :D^{2}}\left(
0,d_{1},d_{2},0,d_{1}d_{2}\right)  \right) \\
&  =\gamma^{12}\\
&  \left(  \lambda_{\left(  d_{1},d_{2},d_{3},d_{4},d_{5}\right)  :N}%
b+b_{1}d_{1}+b_{2}d_{2}+b_{3}d_{3}+b_{13}d_{1}d_{3}+b_{23}d_{2}d_{3}%
+b_{4}d_{4}+b_{5}d_{5}\right) \\
&  \circ\left(  \lambda_{\left(  d_{1},d_{2}\right)  :D^{2}}\left(
0,d_{1},d_{2},0,0\right)  \right) \\
&  =\gamma^{22}%
\end{align*}
This completes the proof.
\end{proof}

\begin{corollary}
\label{cl5.2}Let $M$\ be a microlinear set. Given $\theta_{11},\theta
_{12},\theta_{21},\theta_{22}:D^{2}\rightarrow M$ with
\begin{align*}
\theta_{11}\circ\left(  \lambda_{\left(  d_{1},d_{2}\right)  :D^{2}\left\{
\left(  1,2\right)  \right\}  }\left(  d_{1},d_{2}\right)  \right)   &
=\theta_{21}\circ\left(  \lambda_{\left(  d_{1},d_{2}\right)  :D^{2}\left\{
\left(  1,2\right)  \right\}  }\left(  d_{1},d_{2}\right)  \right) \\
\theta_{12}\circ\left(  \lambda_{\left(  d_{1},d_{2}\right)  :D^{2}\left\{
\left(  1,2\right)  \right\}  }\left(  d_{1},d_{2}\right)  \right)   &
=\theta_{22}\circ\left(  \lambda_{\left(  d_{1},d_{2}\right)  :D^{2}\left\{
\left(  1,2\right)  \right\}  }\left(  d_{1},d_{2}\right)  \right) \\
\theta_{11}\circ\left(  \lambda_{d:D}\left(  0,d\right)  \right)   &
=\theta_{12}\circ\left(  \lambda_{d:D}\left(  0,d\right)  \right) \\
\theta_{21}\circ\left(  \lambda_{d:D}\left(  0,d\right)  \right)   &
=\theta_{22}\circ\left(  \lambda_{d:D}\left(  0,d\right)  \right)
\end{align*}
there exists
\[
\mathfrak{n}_{\left(  \theta_{11},\theta_{12},\theta_{21},\theta_{22}\right)
}:\mathcal{N}\rightarrow M
\]
such that
\begin{align*}
\mathfrak{n}_{\left(  \theta_{11},\theta_{12},\theta_{21},\theta_{22}\right)
}\circ\left(  \lambda_{\left(  d_{1},d_{2}\right)  :D^{2}}\left(
d_{1},0,d_{2},d_{1}d_{2},0\right)  \right)   &  =\theta_{11}\\
\mathfrak{n}_{\left(  \theta_{11},\theta_{12},\theta_{21},\theta_{22}\right)
}\circ\left(  \lambda_{\left(  d_{1},d_{2}\right)  :D^{2}}\left(
d_{1},0,d_{2},0,0\right)  \right)   &  =\theta_{21}\\
\mathfrak{n}_{\left(  \theta_{11},\theta_{12},\theta_{21},\theta_{22}\right)
}\circ\left(  \lambda_{\left(  d_{1},d_{2}\right)  :D^{2}}\left(
0,d_{1},d_{2},0,d_{1}d_{2}\right)  \right)   &  =\theta_{12}\\
\mathfrak{n}_{\left(  \theta_{11},\theta_{12},\theta_{21},\theta_{22}\right)
}\circ\left(  \lambda_{\left(  d_{1},d_{2}\right)  :D^{2}}\left(
0,d_{1},d_{2},0,0\right)  \right)   &  =\theta_{22}%
\end{align*}

\end{corollary}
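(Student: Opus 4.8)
The plan is to obtain this corollary from Lemma \ref{l5.2} in precisely the manner that Corollary \ref{cl4.1} is read off from Lemma \ref{l4.1} and Corollary \ref{cl5.1} from Lemma \ref{l5.l}. By Lemma \ref{l5.2} the displayed diagram is a quasi-colimit diagram of small objects. Since $M$ is a microlinear set, so that $\left\Vert M\right\Vert _{0}\simeq M$, the definition of microlinearity guarantees that exponentiating the entire diagram over $M$ turns this quasi-colimit into a limit diagram of types, in which $\mathcal{N}\rightarrow M$ occurs as the limit. The whole proof then consists in reading off the universal property of that limit for the concrete data $\theta_{11},\theta_{12},\theta_{21},\theta_{22}$.

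Next I would exhibit the four given maps as a cone over this limit diagram. The four apex objects $\mathcal{L}_{11},\mathcal{L}_{12},\mathcal{L}_{21},\mathcal{L}_{22}$ are each a copy of $D^{2}$, so the $\theta_{ij}:D^{2}\rightarrow M$ are exactly the legs landing on them; the remaining legs, through the overlap objects $\mathcal{P}_{1},\mathcal{P}_{2},\mathcal{Q}_{1},\mathcal{Q}_{2}$, are determined once the $\theta_{ij}$ are fixed, and their existence is what the four hypotheses encode. Precisely, the two arrows out of $\mathcal{P}_{1}$, both equal to $\lambda_{(d_{1},d_{2}):D^{2}\{(1,2)\}}(d_{1},d_{2})$, force the first hypothesis relating $\theta_{11}$ and $\theta_{21}$; the two arrows out of $\mathcal{P}_{2}$ force the second, relating $\theta_{12}$ and $\theta_{22}$; and the arrows out of $\mathcal{Q}_{1}$ and $\mathcal{Q}_{2}$, both equal to $\lambda_{d:D}(0,d)$, produce respectively the third hypothesis (relating $\theta_{11}$ and $\theta_{12}$) and the fourth (relating $\theta_{21}$ and $\theta_{22}$). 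Thus the four compatibility conditions in the statement are exactly the cone conditions, and nothing further is required.

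With the cone in hand, I would invoke the universal property of the limit $\mathcal{N}\rightarrow M$ to produce the map $\mathfrak{n}_{(\theta_{11},\theta_{12},\theta_{21},\theta_{22})}:\mathcal{N}\rightarrow M$, homotopically unique, whose composite with each inclusion $\mathcal{L}_{ij}\rightarrow\mathcal{N}$ recorded in Lemma \ref{l5.2} is the corresponding $\theta_{ij}$. Unwinding these inclusions $\lambda_{(d_{1},d_{2}):D^{2}}(d_{1},0,d_{2},d_{1}d_{2},0)$, $\lambda_{(d_{1},d_{2}):D^{2}}(d_{1},0,d_{2},0,0)$, $\lambda_{(d_{1},d_{2}):D^{2}}(0,d_{1},d_{2},0,d_{1}d_{2})$ and $\lambda_{(d_{1},d_{2}):D^{2}}(0,d_{1},d_{2},0,0)$ yields verbatim the four displayed restriction equations, although the statement asks only for existence.

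The substance of the argument is entirely contained in Lemma \ref{l5.2}; granted that quasi-colimit, the corollary is a purely formal consequence of microlinearity. Consequently the only point that demands care is the index bookkeeping of the second step: matching each of the four hypotheses to the correct pair of legs through the correct overlap object, and confirming that the inclusions $\mathcal{L}_{ij}\rightarrow\mathcal{N}$ appearing in the limit are exactly those used to phrase the conclusion. I expect no analytic difficulty beyond this combinatorial verification.
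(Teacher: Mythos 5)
Your proposal is correct and is exactly the argument the paper leaves implicit: the corollary is stated without proof because, once Lemma \ref{l5.2} exhibits the diagram as a quasi-colimit, microlinearity of $M$ turns its exponentiation over $M$ into a limit diagram, and the four hypotheses are precisely the cone conditions through $\mathcal{P}_{1},\mathcal{P}_{2},\mathcal{Q}_{1},\mathcal{Q}_{2}$, so the universal property yields $\mathfrak{n}_{\left(\theta_{11},\theta_{12},\theta_{21},\theta_{22}\right)}$ with the stated restrictions. Your index bookkeeping matches the paper's arrows, so there is nothing to add.
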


\begin{proposition}
\label{p5.2}Let $M$\ be a microlinear set. Given $\theta_{11},\theta
_{12},\theta_{21},\theta_{22}:D^{2}\rightarrow M$ with
\begin{align*}
\theta_{11}\circ\left(  \lambda_{\left(  d_{1},d_{2}\right)  :D^{2}\left\{
\left(  1,2\right)  \right\}  }\left(  d_{1},d_{2}\right)  \right)   &
=\theta_{21}\circ\left(  \lambda_{\left(  d_{1},d_{2}\right)  :D^{2}\left\{
\left(  1,2\right)  \right\}  }\left(  d_{1},d_{2}\right)  \right) \\
\theta_{12}\circ\left(  \lambda_{\left(  d_{1},d_{2}\right)  :D^{2}\left\{
\left(  1,2\right)  \right\}  }\left(  d_{1},d_{2}\right)  \right)   &
=\theta_{22}\circ\left(  \lambda_{\left(  d_{1},d_{2}\right)  :D^{2}\left\{
\left(  1,2\right)  \right\}  }\left(  d_{1},d_{2}\right)  \right) \\
\theta_{11}\circ\left(  \lambda_{d:D}\left(  0,d\right)  \right)   &
=\theta_{12}\circ\left(  \lambda_{d:D}\left(  0,d\right)  \right) \\
\theta_{21}\circ\left(  \lambda_{d:D}\left(  0,d\right)  \right)   &
=\theta_{22}\circ\left(  \lambda_{d:D}\left(  0,d\right)  \right)
\end{align*}
we have
\[
\left(  \theta_{11}\underset{1}{+}\theta_{12}\right)  \circ\left(
\lambda_{\left(  d_{1},d_{2}\right)  :D^{2}\left\{  \left(  1,2\right)
\right\}  }\left(  d_{1},d_{2}\right)  \right)  =\left(  \theta_{21}%
\underset{1}{+}\theta_{22}\right)  \circ\left(  \lambda_{\left(  d_{1}%
,d_{2}\right)  :D^{2}\left\{  \left(  1,2\right)  \right\}  }\left(
d_{1},d_{2}\right)  \right)
\]
and
\[
\left(  \theta_{11}\underset{1}{+}\theta_{12}\right)  \overset{\cdot}%
{-}\left(  \theta_{21}\underset{1}{+}\theta_{22}\right)  =\left(  \theta
_{11}\overset{\cdot}{-}\theta_{21}\right)  +\left(  \theta_{12}\overset{\cdot
}{-}\theta_{22}\right)
\]

\end{proposition}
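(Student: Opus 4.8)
The plan is to reduce the entire statement to the single map
\[
\mathfrak{n}:\equiv\mathfrak{n}_{\left(  \theta_{11},\theta_{12},\theta
_{21},\theta_{22}\right)  }:\mathcal{N}\rightarrow M
\]
supplied by Corollary \ref{cl5.2}, reading off every object occurring in the
statement as a restriction of $\mathfrak{n}$ along a suitable map into
$\mathcal{N}$. Throughout I rely on the fact that the auxiliary maps produced
out of quasi-colimit diagrams into the microlinear set $M$ (those of
Corollaries \ref{cl4.1} and \ref{cl5.1}) are homotopically unique, so that any
explicit $\mathfrak{n}$-formula satisfying the relevant defining restrictions
must agree with them. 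I also use that $D\rightarrow M$ is again a microlinear
set by Proposition \ref{p3.1}, so that the tangent-vector additions invoked in
the definition of $\underset{1}{+}$ and on the right-hand side make sense.

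First I would record the $\mathfrak{n}$-descriptions of the data, namely
\[
\theta_{11}\left(  d_{1},d_{2}\right)  =\mathfrak{n}\left(  d_{1}
,0,d_{2},d_{1}d_{2},0\right)  ,\quad\theta_{21}\left(  d_{1},d_{2}\right)
=\mathfrak{n}\left(  d_{1},0,d_{2},0,0\right)  ,
\]
\[
\theta_{12}\left(  d_{1},d_{2}\right)  =\mathfrak{n}\left(  0,d_{1}
,d_{2},0,d_{1}d_{2}\right)  ,\quad\theta_{22}\left(  d_{1},d_{2}\right)
=\mathfrak{n}\left(  0,d_{1},d_{2},0,0\right)  .
\]
Next I would unwind $\underset{1}{+}$ through currying: $\theta_{11}
\underset{1}{+}\theta_{12}$ is the uncurrying of the sum of the tangent vectors
$\lambda_{d_{1}}\lambda_{d_{2}}\theta_{11}\left(  d_{1},d_{2}\right)  $ and
$\lambda_{d_{1}}\lambda_{d_{2}}\theta_{12}\left(  d_{1},d_{2}\right)  $ to
$D\rightarrow M$. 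Identifying the witness $\mathfrak{l}$ of Corollary
\ref{cl4.1} for this sum with
\[
\lambda_{\left(  e_{1},e_{2}\right)  :D\left(  2\right)  }\lambda_{d:D}
\mathfrak{n}\left(  e_{1},e_{2},d,e_{1}d,e_{2}d\right)  ,
\]
which is well typed because $e_{1}e_{2}=0$ forces every flagged product of
$\mathcal{N}$ to vanish and which restricts correctly to the two tangent
vectors, one obtains $\left(  \theta_{11}\underset{1}{+}\theta_{12}\right)
\left(  d_{1},d_{2}\right)  =\mathfrak{n}\left(  d_{1},d_{1},d_{2},d_{1}
d_{2},d_{1}d_{2}\right)  $. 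The same computation with the product slots left
empty gives $\left(  \theta_{21}\underset{1}{+}\theta_{22}\right)  \left(
d_{1},d_{2}\right)  =\mathfrak{n}\left(  d_{1},d_{1},d_{2},0,0\right)  $.

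The first assertion is then immediate, since on $D^{2}\left\{  \left(
1,2\right)  \right\}  $ we have $d_{1}d_{2}=0$, so the fourth and fifth slots
collapse and the two composites agree. For the second assertion I would
exhibit the witnesses of Corollary \ref{cl5.1} in terms of $\mathfrak{n}$: one
checks that $\mathfrak{m}_{\left(  \theta_{11},\theta_{21}\right)  }\left(
e_{1},e_{2},e_{3}\right)  =\mathfrak{n}\left(  e_{1},0,e_{2},e_{3},0\right)  $
and $\mathfrak{m}_{\left(  \theta_{12},\theta_{22}\right)  }\left(  e_{1}
,e_{2},e_{3}\right)  =\mathfrak{n}\left(  0,e_{1},e_{2},0,e_{3}\right)  $, both
well typed on $D^{3}\left\{  \left(  1,3\right)  ,\left(  2,3\right)  \right\}
$ and restricting as required, so that
\[
\theta_{11}\overset{\cdot}{-}\theta_{21}=\lambda_{d:D}\mathfrak{n}\left(
0,0,0,d,0\right)  ,\quad\theta_{12}\overset{\cdot}{-}\theta_{22}=\lambda
_{d:D}\mathfrak{n}\left(  0,0,0,0,d\right)  .
\]
These are tangent vectors at the common base point $\mathfrak{n}\left(
0,0,0,0,0\right)  $, and computing their sum through the corresponding witness
of Corollary \ref{cl4.1}, namely $\lambda_{\left(  e_{1},e_{2}\right)
:D\left(  2\right)  }\mathfrak{n}\left(  0,0,0,e_{1},e_{2}\right)  $, yields
$\lambda_{d:D}\mathfrak{n}\left(  0,0,0,d,d\right)  $. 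On the other side, the
identification
\[
\mathfrak{m}_{\left(  \theta_{11}\underset{1}{+}\theta_{12},\,\theta
_{21}\underset{1}{+}\theta_{22}\right)  }\left(  e_{1},e_{2},e_{3}\right)
=\mathfrak{n}\left(  e_{1},e_{1},e_{2},e_{3},e_{3}\right)
\]
shows that the left-hand strong difference equals $\lambda_{d:D}
\mathfrak{n}\left(  0,0,0,d,d\right)  $ as well, and the two sides coincide.

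I expect the main obstacle to be the bookkeeping that legitimizes these
$\mathfrak{n}$-formulas: each candidate must be shown to land in the correct
simplicial infinitesimal type --- exactly the product-vanishing constraints
flagged in $\mathcal{N}$, $D\left(  2\right)  $ and $D^{3}\left\{  \left(
1,3\right)  ,\left(  2,3\right)  \right\}  $ --- and to satisfy the defining
restrictions, whereupon homotopy uniqueness of the factorizations forces the
identifications. The conceptual content is merely that adding in the first
variable distributes the strong-difference datum, carried in the fourth and
fifth slots, additively; no computation beyond these restriction checks is
required.
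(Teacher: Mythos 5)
Your proof is correct and follows essentially the same route as the paper's own argument: both reduce everything to the map $\mathfrak{n}_{\left(\theta_{11},\theta_{12},\theta_{21},\theta_{22}\right)}:\mathcal{N}\rightarrow M$ of Corollary \ref{cl5.2}, identify the witnesses $\mathfrak{m}_{\left(\theta_{11}\underset{1}{+}\theta_{12},\theta_{21}\underset{1}{+}\theta_{22}\right)}$ and $\mathfrak{l}_{\left(\theta_{11}\overset{\cdot}{-}\theta_{21},\theta_{12}\overset{\cdot}{-}\theta_{22}\right)}$ as restrictions of $\mathfrak{n}$ via homotopy uniqueness of factorizations, and conclude that both sides equal $\lambda_{d:D}\mathfrak{n}\left(0,0,0,d,d\right)$. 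The only difference is that you spell out the well-typedness checks and the intermediate $\mathfrak{n}$-formulas (and the first assertion) that the paper leaves implicit.
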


\begin{proof}
Since
\begin{align*}
&  \lambda_{\left(  d_{1},d_{2},d_{3}\right)  :D^{3}\left\{  \left(
1,3\right)  ,\left(  2,3\right)  \right\}  }\mathfrak{m}_{\left(  \theta
_{11}\underset{1}{+}\theta_{12},\theta_{21}\underset{1}{+}\theta_{22}\right)
}\left(  d_{1},d_{2},d_{3}\right) \\
&  =\lambda_{\left(  d_{1},d_{2},d_{3}\right)  :D^{3}\left\{  \left(
1,3\right)  ,\left(  2,3\right)  \right\}  }\mathfrak{n}_{\left(  \theta
_{11},\theta_{12},\theta_{21},\theta_{22}\right)  }\left(  d_{1},d_{1}%
,d_{2},d_{3},d_{3}\right)
\end{align*}
and
\begin{align*}
&  \lambda_{\left(  d_{1},d_{2}\right)  :D^{2}\left\{  \left(  1,2\right)
\right\}  }\mathfrak{l}_{\left(  \theta_{11}\overset{\cdot}{-}\theta
_{21},\theta_{12}\overset{\cdot}{-}\theta_{22}\right)  }\left(  d_{1}%
,d_{2}\right) \\
&  =\lambda_{\left(  d_{1},d_{2}\right)  :D^{2}\left\{  \left(  1,2\right)
\right\}  }\mathfrak{n}_{\left(  \theta_{11},\theta_{12},\theta_{21}%
,\theta_{22}\right)  }\left(  0,0,0,d_{1},d_{2}\right)
\end{align*}
we have
\begin{align*}
&  \lambda_{d:D}\left(  \left(  \theta_{11}\underset{1}{+}\theta_{12}\right)
\overset{\cdot}{-}\left(  \theta_{21}\underset{1}{+}\theta_{22}\right)
\right)  \left(  d\right) \\
&  =\lambda_{d:D}\mathfrak{m}_{\left(  \theta_{11}\underset{1}{+}\theta
_{12},\theta_{21}\underset{1}{+}\theta_{22}\right)  }\left(  0,0,d\right) \\
&  =\lambda_{d:D}\mathfrak{n}_{\left(  \theta_{11},\theta_{12},\theta
_{21},\theta_{22}\right)  }\left(  0,0,0,d,d\right) \\
&  =\lambda_{d:D}\mathfrak{l}_{\left(  \theta_{11}\overset{\cdot}{-}%
\theta_{21},\theta_{12}\overset{\cdot}{-}\theta_{22}\right)  }\left(
d,d\right) \\
&  =\lambda_{d:D}\left(  \left(  \theta_{11}\overset{\cdot}{-}\theta
_{21}\right)  +\left(  \theta_{12}\overset{\cdot}{-}\theta_{22}\right)
\right)  \left(  d\right)
\end{align*}
This completes the proof.
\end{proof}

\begin{proposition}
\label{p5.3}Let $M$\ be a microlinear set. Given $\alpha:\mathbb{R}%
$\ and$\ \theta_{1},\theta_{2}:D^{2}\rightarrow M$ with
\[
\theta_{1}\circ\left(  \lambda_{\left(  d_{1},d_{2}\right)  :D^{2}\left\{
\left(  1,2\right)  \right\}  }\left(  d_{1},d_{2}\right)  \right)
=\theta_{2}\circ\left(  \lambda_{\left(  d_{1},d_{2}\right)  :D^{2}\left\{
\left(  1,2\right)  \right\}  }\left(  d_{1},d_{2}\right)  \right)
\]
we have
\[
\left(  \alpha\underset{1}{\cdot}\theta_{1}\right)  \circ\left(
\lambda_{\left(  d_{1},d_{2}\right)  :D^{2}\left\{  \left(  1,2\right)
\right\}  }\left(  d_{1},d_{2}\right)  \right)  =\left(  \alpha\underset
{1}{\cdot}\theta_{2}\right)  \circ\left(  \lambda_{\left(  d_{1},d_{2}\right)
:D^{2}\left\{  \left(  1,2\right)  \right\}  }\left(  d_{1},d_{2}\right)
\right)
\]
and
\[
\left(  \alpha\underset{1}{\cdot}\theta_{1}\right)  \overset{\cdot}{-}\left(
\alpha\underset{1}{\cdot}\theta_{2}\right)  =\alpha\left(  \theta_{1}%
\overset{\cdot}{-}\theta_{2}\right)
\]

\end{proposition}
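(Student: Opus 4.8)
The plan is to reduce everything to a single coordinate identity for the operation $\underset{1}{\cdot}$ and then to transport the defining data of the strong difference through it. First I would unwind the two definitions involved: the scalar multiplication $\alpha t:\equiv\lambda_{d:D}t\left(\alpha d\right)$ on tangent vectors together with the definition of $\alpha\underset{1}{\cdot}\theta$. Currying $\theta$ as a map $D\rightarrow\left(D\rightarrow M\right)$ and applying $\alpha$ to it merely rescales the outer parameter, so that one obtains the clean formula $\left(\alpha\underset{1}{\cdot}\theta\right)\left(d_1,d_2\right)=\theta\left(\alpha d_1,d_2\right)$. Everything below rests on this identity, which isolates the effect of $\underset{1}{\cdot}$ as a rescaling of the first infinitesimal coordinate.

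Granting this, the first identity is immediate: if $\left(d_1,d_2\right):D^{2}\{(1,2)\}$, then $\left(\alpha d_1\right)d_2=\alpha\left(d_1d_2\right)=0$ and $\left(\alpha d_1\right)^{2}=\alpha^{2}d_1^{2}=0$, so $\left(\alpha d_1,d_2\right)$ again lies in $D^{2}\{(1,2)\}$; since $\theta_1$ and $\theta_2$ agree on $D^{2}\{(1,2)\}$ by hypothesis, we get $\theta_1\left(\alpha d_1,d_2\right)=\theta_2\left(\alpha d_1,d_2\right)$, which is exactly the claimed compatibility of $\alpha\underset{1}{\cdot}\theta_1$ and $\alpha\underset{1}{\cdot}\theta_2$. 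This step is what guarantees that $\mathfrak{m}_{\left(\alpha\underset{1}{\cdot}\theta_1,\alpha\underset{1}{\cdot}\theta_2\right)}$ exists in the first place, so it must be carried out before the second identity.

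For the second identity the key move is to guess the right representative and verify it against the defining equations of $\mathfrak{m}$, invoking the homotopic uniqueness furnished by the quasi-colimit diagram of Lemma \ref{l5.l} (Corollary \ref{cl5.1}). I would propose
\[
\mathfrak{m}_{\left(\alpha\underset{1}{\cdot}\theta_1,\alpha\underset{1}{\cdot}\theta_2\right)}\left(d_1,d_2,d_3\right)=\mathfrak{m}_{\left(\theta_1,\theta_2\right)}\left(\alpha d_1,d_2,\alpha d_3\right).
\]
One first checks well-definedness: for $\left(d_1,d_2,d_3\right):D^{3}\{(1,3),(2,3)\}$ the rescaled triple $\left(\alpha d_1,d_2,\alpha d_3\right)$ still satisfies $\left(\alpha d_1\right)\left(\alpha d_3\right)=\alpha^{2}d_1d_3=0$ and $d_2\left(\alpha d_3\right)=\alpha\left(d_2d_3\right)=0$. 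Then one verifies the two restriction equations: precomposing with $\lambda_{\left(d_1,d_2\right):D^{2}}\left(d_1,d_2,0\right)$ gives $\mathfrak{m}_{\left(\theta_1,\theta_2\right)}\left(\alpha d_1,d_2,0\right)=\theta_2\left(\alpha d_1,d_2\right)=\left(\alpha\underset{1}{\cdot}\theta_2\right)\left(d_1,d_2\right)$, while precomposing with $\lambda_{\left(d_1,d_2\right):D^{2}}\left(d_1,d_2,d_1d_2\right)$ gives $\mathfrak{m}_{\left(\theta_1,\theta_2\right)}\left(\alpha d_1,d_2,\alpha d_1d_2\right)=\theta_1\left(\alpha d_1,d_2\right)=\left(\alpha\underset{1}{\cdot}\theta_1\right)\left(d_1,d_2\right)$, where the last equality uses $\left(\alpha d_1\right)d_2=\alpha d_1d_2$ so that the third slot is exactly the product of the first two. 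By uniqueness the two sides of the displayed equation coincide.

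The anticipated subtle point, and the one I would flag as the crux, is precisely this matching of the third coordinate: rescaling the first slot by $\alpha$ forces the ``diagonal'' $d_1d_2$ to become $\alpha d_1d_2$, so the third slot must \emph{also} be rescaled by $\alpha$ for the representative to satisfy the $d_1d_2$-equation; scaling the first coordinate alone fails. Once the displayed equation is in hand the conclusion is a direct substitution: $\left(\alpha\underset{1}{\cdot}\theta_1\right)\overset{\cdot}{-}\left(\alpha\underset{1}{\cdot}\theta_2\right)=\lambda_{d:D}\mathfrak{m}_{\left(\theta_1,\theta_2\right)}\left(0,0,\alpha d\right)$ and $\alpha\left(\theta_1\overset{\cdot}{-}\theta_2\right)=\lambda_{d:D}\left(\theta_1\overset{\cdot}{-}\theta_2\right)\left(\alpha d\right)=\lambda_{d:D}\mathfrak{m}_{\left(\theta_1,\theta_2\right)}\left(0,0,\alpha d\right)$, and these agree. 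This parallels the proof of Proposition \ref{p5.2}, but is lighter, in that no recourse to the five-variable object $\mathfrak{n}$ of Corollary \ref{cl5.2} is needed: the single map $\mathfrak{m}_{\left(\theta_1,\theta_2\right)}$ with rescaled arguments suffices.
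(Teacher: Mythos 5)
Your proposal is correct and takes essentially the same route as the paper: the paper's proof rests on precisely the identity $\mathfrak{m}_{\left(\alpha\underset{1}{\cdot}\theta_{1},\alpha\underset{1}{\cdot}\theta_{2}\right)}\left(d_{1},d_{2},d_{3}\right)=\mathfrak{m}_{\left(\theta_{1},\theta_{2}\right)}\left(\alpha d_{1},d_{2},\alpha d_{3}\right)$ that you propose, followed by the same evaluation at $\left(0,0,d\right)$ to get both sides equal to $\lambda_{d:D}\mathfrak{m}_{\left(\theta_{1},\theta_{2}\right)}\left(0,0,\alpha d\right)$. The only difference is that you spell out the coordinate formula $\left(\alpha\underset{1}{\cdot}\theta\right)\left(d_{1},d_{2}\right)=\theta\left(\alpha d_{1},d_{2}\right)$, the well-definedness and restriction checks, and the first (compatibility) identity, all of which the paper leaves implicit.
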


\begin{proof}
Since
\begin{align*}
&  \lambda_{\left(  d_{1},d_{2},d_{3}\right)  :D^{3}\left\{  \left(
1,3\right)  ,\left(  2,3\right)  \right\}  }\mathfrak{m}_{\left(
\alpha\underset{1}{\cdot}\theta_{1},\alpha\underset{1}{\cdot}\theta
_{2}\right)  }\left(  d_{1},d_{2},d_{3}\right) \\
&  =\lambda_{\left(  d_{1},d_{2},d_{3}\right)  :D^{3}\left\{  \left(
1,3\right)  ,\left(  2,3\right)  \right\}  }\mathfrak{m}_{\left(  \theta
_{1},\theta_{2}\right)  }\left(  \alpha d_{1},d_{2},\alpha d_{3}\right)
\end{align*}
we have
\begin{align*}
&  \lambda_{d:D}\left(  \alpha\left(  \theta_{1}\overset{\cdot}{-}\theta
_{2}\right)  \right)  \left(  d\right) \\
&  =\lambda_{d:D}\mathfrak{m}_{\left(  \theta_{1},\theta_{2}\right)  }\left(
0,0,\alpha d\right) \\
&  =\lambda_{d:D}\mathfrak{m}_{\left(  \alpha\underset{1}{\cdot}\theta
_{1},\alpha\underset{1}{\cdot}\theta_{2}\right)  }\left(  0,0,d\right) \\
&  =\lambda_{d:D}\left(  \left(  \alpha\underset{1}{\cdot}\theta_{1}\right)
\overset{\cdot}{-}\left(  \alpha\underset{1}{\cdot}\theta_{2}\right)  \right)
\left(  d\right)
\end{align*}

\end{proof}

\begin{lemma}
\label{l5.3}The diagram
\[%
\begin{array}
[c]{ccccc}
&  & D^{4}\left\{  \left(  1,3\right)  ,\left(  2,3\right)  ,\left(
1,4\right)  ,\left(  2,4\right)  ,\left(  3,4\right)  \right\}  &  & \\
& \nearrow & \uparrow & \nwarrow & \\
D^{2} &  & D^{2} &  & D^{2}\\
& \nwarrow & \uparrow & \nearrow & \\
&  & D^{2}\left\{  \left(  1,2\right)  \right\}  &  &
\end{array}
\]
is a quasi-colimit diagram, where the lower three arrows are all
\[
\lambda_{\left(  d_{1},d_{2}\right)  ::D^{2}\left\{  \left(  1,2\right)
\right\}  }\left(  d_{1},d_{2}\right)
\]
while the upper three arrows are
\begin{align*}
&  \lambda_{\left(  d_{1},d_{2}\right)  ::D^{2}}\left(  d_{1},d_{2},0,0\right)
\\
&  \lambda_{\left(  d_{1},d_{2}\right)  ::D^{2}}\left(  d_{1},d_{2},d_{1}%
d_{2},0\right) \\
&  \lambda_{\left(  d_{1},d_{2}\right)  ::D^{2}}\left(  d_{1},d_{2},d_{1}%
d_{2},d_{1}d_{2}\right)
\end{align*}
from left to right
\end{lemma}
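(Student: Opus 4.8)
The plan is to invoke the Remark following Axiom~\ref{a1.1}: a finite diagram of infinitesimal types is a quasi-colimit diagram precisely when the diagram obtained by applying the contravariant functor $\rightarrow\mathbb{R}$ is a limit diagram. So I would reduce the lemma to showing that the three upper arrows $u_1:\equiv\lambda_{(d_1,d_2):D^2}(d_1,d_2,0,0)$, $u_2:\equiv\lambda_{(d_1,d_2):D^2}(d_1,d_2,d_1 d_2,0)$ and $u_3:\equiv\lambda_{(d_1,d_2):D^2}(d_1,d_2,d_1 d_2,d_1 d_2)$ exhibit $D^4\{(1,3),(2,3),(1,4),(2,4),(3,4)\}\rightarrow\mathbb{R}$ as the fiber product, over $D^2\{(1,2)\}\rightarrow\mathbb{R}$, of three copies of $D^2\rightarrow\mathbb{R}$. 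As a preliminary sanity check I would note that the three composites $D^2\{(1,2)\}\rightarrow D^4\{\dots\}$ agree, since $d_1 d_2=0$ on $D^2\{(1,2)\}$ annihilates the $d_1 d_2$ entries occurring in $u_2$ and $u_3$; thus the cone is well defined.

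Next, exactly as in the proof of Lemma~\ref{l5.2}, I would coordinatize the three function types by Axiom~\ref{a1.1}. A function $D^2\rightarrow\mathbb{R}$ is uniquely $a+a_1 d_1+a_2 d_2+a_{12}d_1 d_2$; a function $D^2\{(1,2)\}\rightarrow\mathbb{R}$ is uniquely $b+b_1 d_1+b_2 d_2$; and a function on $D^4\{(1,3),(2,3),(1,4),(2,4),(3,4)\}$ is uniquely $c+c_1 d_1+c_2 d_2+c_3 d_3+c_4 d_4+c_{12}d_1 d_2$. Determining this last normal form is the one genuinely delicate point: among the degree-two monomials only $d_1 d_2$ survives the relations, and every degree-three monomial vanishes because it contains one of the forbidden pairs, so the coordinate ring of the top object has exactly the six displayed parameters.

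I would then read off the pullbacks along the upper arrows in these coordinates. Restriction along $u_1$, $u_2$, $u_3$ sends $(c,c_1,c_2,c_3,c_4,c_{12})$ to the $D^2$-coefficient vectors $(c,c_1,c_2,c_{12})$, $(c,c_1,c_2,c_3+c_{12})$ and $(c,c_1,c_2,c_3+c_4+c_{12})$ respectively, while restriction along each lower inclusion simply forgets the $d_1 d_2$-coefficient. Hence a compatible triple $\gamma^1,\gamma^2,\gamma^3:D^2\rightarrow\mathbb{R}$ is one whose constant and linear coefficients $a,a_1,a_2$ coincide across the three legs, the three quadratic coefficients $a_{12}^1,a_{12}^2,a_{12}^3$ being unconstrained; so the fiber product is parametrized by six free reals, matching the six coordinates of the top object.

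Finally I would establish the limit property by solving for the unique filler $g$ with $g\circ u_i=\gamma^i$: one reads off $c=a$, $c_1=a_1$, $c_2=a_2$, $c_{12}=a_{12}^1$, $c_3=a_{12}^2-a_{12}^1$ and $c_4=a_{12}^3-a_{12}^2$, so the cone map is a linear isomorphism onto the fiber product, giving a contractible space of fillers (everything in sight is a set by Axiom~\ref{a1.0}). The main obstacle is confined to the second step—pinning down the surviving monomials of the top Weil algebra so that its six coefficients match the six free parameters of the fiber product; once the dimensions are aligned, the inversion in the last step is purely routine linear algebra over $\mathbb{R}$.
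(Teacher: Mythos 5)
Your proof is correct and takes essentially the same route as the paper's: coordinatize every function type via Axiom \ref{a1.1}, note that agreement on $D^{2}\left\{(1,2)\right\}$ forces the constant and linear coefficients of the three $\gamma^{i}$ to coincide while leaving the $d_{1}d_{2}$-coefficients free, and then match coefficients to produce the filler on $D^{4}\left\{(1,3),(2,3),(1,4),(2,4),(3,4)\right\}$. If anything, you are more thorough than the paper, which merely asserts the existence of $b,b_{1},b_{2},b_{12},b_{3},b_{4}$ without exhibiting the solution $c_{3}=a_{12}^{2}-a_{12}^{1}$, $c_{4}=a_{12}^{3}-a_{12}^{2}$ or addressing uniqueness of the filler.
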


\begin{proof}
By Axiom \ref{a1.1} we are sure that, given
\[
\gamma^{1},\gamma^{2},\gamma^{3}:D^{2}\rightarrow\mathbb{R}%
\]
there exist
\[
a^{1},a_{1}^{1},a_{2}^{1},a_{12}^{1},a^{2},a_{1}^{2},a_{2}^{2},a_{12}%
^{2},a^{3},a_{1}^{3},a_{2}^{3},a_{12}^{3}:\mathbb{R}%
\]
such that
\begin{align*}
&  \lambda_{\left(  d_{1},d_{2}\right)  :D^{2}}\gamma^{1}\left(  d_{1}%
,d_{2}\right) \\
&  =\lambda_{\left(  d_{1},d_{2}\right)  :D^{2}}\left(  a^{1}+a_{1}^{1}%
d_{1}+a_{2}^{1}d_{2}+a_{12}^{1}d_{1}d_{2}\right) \\
&  \lambda_{\left(  d_{1},d_{2}\right)  :D^{2}}\gamma^{2}\left(  d_{1}%
,d_{2}\right) \\
&  =\lambda_{\left(  d_{1},d_{2}\right)  :D^{2}}\left(  a^{2}+a_{1}^{2}%
d_{1}+a_{2}^{2}d_{2}+a_{12}^{2}d_{1}d_{2}\right) \\
&  \lambda_{\left(  d_{1},d_{2}\right)  :D^{2}}\gamma^{3}\left(  d_{1}%
,d_{2}\right) \\
&  =\lambda_{\left(  d_{1},d_{2}\right)  :D^{2}}\left(  a^{3}+a_{1}^{3}%
d_{1}+a_{2}^{3}d_{2}+a_{12}^{3}d_{1}d_{2}\right)
\end{align*}
If they satisfy
\begin{align*}
&  \gamma^{1}\circ\left(  \lambda_{\left(  d_{1},d_{2}\right)  ::D^{2}\left\{
\left(  1,2\right)  \right\}  }\left(  d_{1},d_{2}\right)  \right) \\
&  =\gamma^{2}\circ\left(  \lambda_{\left(  d_{1},d_{2}\right)  ::D^{2}%
\left\{  \left(  1,2\right)  \right\}  }\left(  d_{1},d_{2}\right)  \right) \\
&  =\gamma^{3}\circ\left(  \lambda_{\left(  d_{1},d_{2}\right)  ::D^{2}%
\left\{  \left(  1,2\right)  \right\}  }\left(  d_{1},d_{2}\right)  \right)
\end{align*}
then we have
\begin{align*}
a^{1}  &  =a^{2}=a^{3}\\
a_{1}^{1}  &  =a_{1}^{2}=a_{1}^{3}\\
a_{2}^{1}  &  =a_{2}^{2}=a_{3}^{3}%
\end{align*}
Therefore there exist
\[
b,b_{1},b_{2},b_{12},b_{3},b_{4}:\mathbb{R}%
\]
such that
\begin{align*}
&  \left(  \lambda_{\left(  d_{1},d_{2},d_{3},d_{4}\right)  :D^{4}\left\{
\left(  1,3\right)  ,\left(  2,3\right)  ,\left(  1,4\right)  ,\left(
2,4\right)  ,\left(  3,4\right)  \right\}  }b+b_{1}d_{1}+b_{2}d_{2}%
+b_{12}d_{1}d_{2}+b_{3}d_{3}+b_{4}d_{4}\right) \\
&  \circ\left(  \lambda_{\left(  d_{1},d_{2}\right)  :D^{2}}\gamma^{1}\left(
d_{1},d_{2}\right)  \right) \\
&  =\gamma^{1}\\
&  \left(  \lambda_{\left(  d_{1},d_{2},d_{3},d_{4}\right)  :D^{4}\left\{
\left(  1,3\right)  ,\left(  2,3\right)  ,\left(  1,4\right)  ,\left(
2,4\right)  ,\left(  3,4\right)  \right\}  }b+b_{1}d_{1}+b_{2}d_{2}%
+b_{12}d_{1}d_{2}+b_{3}d_{3}+b_{4}d_{4}\right) \\
&  \circ\lambda_{\left(  d_{1},d_{2}\right)  ::D^{2}}\left(  d_{1},d_{2}%
,d_{1}d_{2},0\right) \\
&  =\gamma^{2}\\
&  \left(  \lambda_{\left(  d_{1},d_{2},d_{3},d_{4}\right)  :D^{4}\left\{
\left(  1,3\right)  ,\left(  2,3\right)  ,\left(  1,4\right)  ,\left(
2,4\right)  ,\left(  3,4\right)  \right\}  }b+b_{1}d_{1}+b_{2}d_{2}%
+b_{12}d_{1}d_{2}+b_{3}d_{3}+b_{4}d_{4}\right) \\
&  \circ\lambda_{\left(  d_{1},d_{2}\right)  ::D^{2}}\left(  d_{1},d_{2}%
,d_{1}d_{2},d_{1}d_{2}\right) \\
&  =\gamma^{3}%
\end{align*}
This completes the proof.
\end{proof}

\begin{corollary}
\label{cl5.3}Let $M$\ be a microlinear set. Given $\theta_{1},\theta
_{2},\theta_{3}:D^{2}\rightarrow M$ with
\begin{align*}
&  \theta_{1}\circ\left(  \lambda_{\left(  d_{1},d_{2}\right)  :D^{2}\left\{
\left(  1,2\right)  \right\}  }\left(  d_{1},d_{2}\right)  \right) \\
&  =\theta_{2}\circ\left(  \lambda_{\left(  d_{1},d_{2}\right)  :D^{2}\left\{
\left(  1,2\right)  \right\}  }\left(  d_{1},d_{2}\right)  \right) \\
&  =\theta_{3}\circ\left(  \lambda_{\left(  d_{1},d_{2}\right)  :D^{2}\left\{
\left(  1,2\right)  \right\}  }\left(  d_{1},d_{2}\right)  \right)
\end{align*}
there exists
\[
\mathfrak{m}_{\left(  \theta_{1},\theta_{2},\theta_{3}\right)  }:D^{4}\left\{
\left(  1,3\right)  ,\left(  2,3\right)  ,\left(  1,4\right)  ,\left(
2,4\right)  ,\left(  3,4\right)  \right\}  \rightarrow M
\]
such that
\begin{align*}
\mathfrak{m}_{\left(  \theta_{1},\theta_{2},\theta_{3}\right)  }\circ\left(
\lambda_{\left(  d_{1},d_{2}\right)  ::D^{2}}\left(  d_{1},d_{2},d_{1}%
d_{2},d_{1}d_{2}\right)  \right)   &  =\theta_{1}\\
\mathfrak{m}_{\left(  \theta_{1},\theta_{2},\theta_{3}\right)  }\circ\left(
\lambda_{\left(  d_{1},d_{2}\right)  ::D^{2}}\left(  d_{1},d_{2},d_{1}%
d_{2},0\right)  \right)   &  =\theta_{2}\\
\mathfrak{m}_{\left(  \theta_{1},\theta_{2},\theta_{3}\right)  }\circ\left(
\lambda_{\left(  d_{1},d_{2}\right)  ::D^{2}}\left(  d_{1},d_{2},0,0\right)
\right)   &  =\theta_{3}%
\end{align*}

\end{corollary}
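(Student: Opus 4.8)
The plan is to obtain this as an immediate consequence of Lemma \ref{l5.3} together with the microlinearity of $M$, in exactly the same spirit as Corollaries \ref{cl5.1} and \ref{cl5.2}. First I would appeal to the definition of microlinearity. Since $M$ is a microlinear \emph{set}, its set truncation $\Vert M\Vert_0$ is equivalent to $M$ itself, and so exponentiating any quasi-colimit diagram of small objects over $M$ yields a limit diagram of types. Applying this to the quasi-colimit diagram furnished by Lemma \ref{l5.3} shows that the type $D^4\{(1,3),(2,3),(1,4),(2,4),(3,4)\}\rightarrow M$ is the limit of the diagram obtained by exponentiating each node of that tripod into $M$.

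Next I would read off what this limit is concretely. The bottom node $D^2\{(1,2)\}$ maps into each of the three middle copies of $D^2$ by the common inclusion $\lambda_{(d_1,d_2):D^2\{(1,2)\}}(d_1,d_2)$, so the limit type is homotopically the type of triples $(\theta_1,\theta_2,\theta_3)$ of maps $D^2\rightarrow M$ that coincide after precomposition with this inclusion---which is precisely the hypothesis of the corollary. The limiting cone is delivered by precomposing a map $\mathfrak{m}:D^4\{(1,3),(2,3),(1,4),(2,4),(3,4)\}\rightarrow M$ with the three upper arrows $\lambda_{(d_1,d_2):D^2}(d_1,d_2,d_1d_2,d_1d_2)$, $\lambda_{(d_1,d_2):D^2}(d_1,d_2,d_1d_2,0)$ and $\lambda_{(d_1,d_2):D^2}(d_1,d_2,0,0)$.

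Finally, the universal property of the limit produces, for any triple satisfying the agreement condition, a homotopically unique $\mathfrak{m}_{(\theta_1,\theta_2,\theta_3)}$ whose restrictions along these three arrows are $\theta_1$, $\theta_2$ and $\theta_3$ respectively; this is exactly the asserted conclusion. I expect no genuine analytic obstacle here, since the combinatorial heart of the matter---verifying that the displayed diagram really is a quasi-colimit---was already settled in Lemma \ref{l5.3}. The only point demanding care is the bookkeeping: matching each upper arrow to the correct $\theta_i$ in the stated order, so that the leg $\lambda_{(d_1,d_2):D^2}(d_1,d_2,d_1d_2,d_1d_2)$ recovers $\theta_1$ while the leg $\lambda_{(d_1,d_2):D^2}(d_1,d_2,0,0)$ recovers $\theta_3$, and confirming that the single compatibility condition over $D^2\{(1,2)\}$ is genuinely what places the triple in the limit.
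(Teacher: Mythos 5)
Your proposal is correct and is exactly the argument the paper intends: the paper states Corollary \ref{cl5.3} without proof precisely because it follows immediately from Lemma \ref{l5.3} together with the definition of microlinearity (noting, as you do, that for a microlinear set $\Vert M\Vert_0\simeq M$), with the universal property of the resulting limit diagram supplying $\mathfrak{m}_{(\theta_1,\theta_2,\theta_3)}$ and its three restriction identities. Your bookkeeping of which upper arrow recovers which $\theta_i$ also matches the statement, so nothing is missing.
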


\begin{proposition}
\label{p5.4}(\underline{The primordial general Jacobi identity}) Let $M$\ be a
microlinear set. Given $\theta_{1},\theta_{2},\theta_{3}:D^{2}\rightarrow M$
with
\begin{align*}
&  \theta_{1}\circ\left(  \lambda_{\left(  d_{1},d_{2}\right)  :D^{2}\left\{
\left(  1,2\right)  \right\}  }\left(  d_{1},d_{2}\right)  \right) \\
&  =\theta_{2}\circ\left(  \lambda_{\left(  d_{1},d_{2}\right)  :D^{2}\left\{
\left(  1,2\right)  \right\}  }\left(  d_{1},d_{2}\right)  \right) \\
&  =\theta_{3}\circ\left(  \lambda_{\left(  d_{1},d_{2}\right)  :D^{2}\left\{
\left(  1,2\right)  \right\}  }\left(  d_{1},d_{2}\right)  \right)
\end{align*}
we have
\[
\left(  \theta_{1}\overset{\cdot}{-}\theta_{2}\right)  +\left(  \theta
_{2}\overset{\cdot}{-}\theta_{3}\right)  =\theta_{1}\overset{\cdot}{-}%
\theta_{3}%
\]
In particular, we have
\[
\left(  \theta_{1}\overset{\cdot}{-}\theta_{2}\right)  +\left(  \theta
_{2}\overset{\cdot}{-}\theta_{1}\right)  =0
\]

\end{proposition}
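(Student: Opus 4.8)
The plan is to funnel all three strong differences in the identity through the single map
\[
\mathfrak{m}:\equiv\mathfrak{m}_{\left(  \theta_{1},\theta_{2},\theta_{3}\right)  }:D^{4}\left\{  \left(  1,3\right)  ,\left(  2,3\right)  ,\left(  1,4\right)  ,\left(  2,4\right)  ,\left(  3,4\right)  \right\}  \rightarrow M
\]
furnished by Corollary \ref{cl5.3}. The technique throughout is exactly the one used in the proof of Theorem \ref{t4.1}: to pin down a map out of a simplicial infinitesimal type it suffices, by the homotopical uniqueness attached to the relevant quasi-colimit, to verify that a candidate agrees with it after composition with each leg of the cocone.

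First I would re-express each pairwise strong difference in terms of $\mathfrak{m}$. Consider the three reparametrising maps $D^{3}\left\{  \left(  1,3\right)  ,\left(  2,3\right)  \right\}  \rightarrow D^{4}\left\{  \left(  1,3\right)  ,\left(  2,3\right)  ,\left(  1,4\right)  ,\left(  2,4\right)  ,\left(  3,4\right)  \right\}  $ defined by $\left(  e_{1},e_{2},e_{3}\right)  \mapsto\left(  e_{1},e_{2},e_{1}e_{2},e_{3}\right)  $, by $\left(  e_{1},e_{2},e_{3}\right)  \mapsto\left(  e_{1},e_{2},e_{3},0\right)  $ and by $\left(  e_{1},e_{2},e_{3}\right)  \mapsto\left(  e_{1},e_{2},e_{3},e_{3}\right)  $; a routine check using $e_{i}^{2}=0$ together with $e_{1}e_{3}=e_{2}e_{3}=0$ shows each lands in the prescribed target. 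Composing $\mathfrak{m}$ with them and testing against the two legs $\lambda_{\left(  d_{1},d_{2}\right)  :D^{2}}\left(  d_{1},d_{2},0\right)  $ and $\lambda_{\left(  d_{1},d_{2}\right)  :D^{2}}\left(  d_{1},d_{2},d_{1}d_{2}\right)  $ of Corollary \ref{cl5.1}, I would recognise the three composites as $\mathfrak{m}_{\left(  \theta_{1},\theta_{2}\right)  }$, $\mathfrak{m}_{\left(  \theta_{2},\theta_{3}\right)  }$ and $\mathfrak{m}_{\left(  \theta_{1},\theta_{3}\right)  }$ respectively. Evaluating at $\left(  0,0,d\right)  $ then yields
\[
\theta_{1}\overset{\cdot}{-}\theta_{2}=\lambda_{d:D}\mathfrak{m}\left(  0,0,0,d\right)  ,\quad\theta_{2}\overset{\cdot}{-}\theta_{3}=\lambda_{d:D}\mathfrak{m}\left(  0,0,d,0\right)  ,\quad\theta_{1}\overset{\cdot}{-}\theta_{3}=\lambda_{d:D}\mathfrak{m}\left(  0,0,d,d\right)  .
\]

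Next I would evaluate the left-hand sum with the tangent-vector addition of \S\ref{s4}. Writing $t_{1}:\equiv\theta_{1}\overset{\cdot}{-}\theta_{2}$ and $t_{2}:\equiv\theta_{2}\overset{\cdot}{-}\theta_{3}$, I claim $\mathfrak{l}_{\left(  t_{1},t_{2}\right)  }=\lambda_{\left(  e_{1},e_{2}\right)  :D\left(  2\right)  }\mathfrak{m}\left(  0,0,e_{2},e_{1}\right)  $; this is well defined because $e_{1}e_{2}=0$ on $D\left(  2\right)  $, and composing it with $\lambda_{d:D}\left(  d,0\right)  $ and $\lambda_{d:D}\left(  0,d\right)  $ recovers $t_{1}$ and $t_{2}$ by the formulas just obtained, so Corollary \ref{cl4.1} identifies it. Hence $t_{1}+t_{2}=\lambda_{d:D}\mathfrak{l}_{\left(  t_{1},t_{2}\right)  }\left(  d,d\right)  =\lambda_{d:D}\mathfrak{m}\left(  0,0,d,d\right)  =\theta_{1}\overset{\cdot}{-}\theta_{3}$, which is the asserted identity. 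For the special case I would set $\theta_{3}:\equiv\theta_{1}$ and observe that $\mathfrak{m}_{\left(  \theta_{1},\theta_{1}\right)  }$ may be taken to be $\left(  d_{1},d_{2},d_{3}\right)  \mapsto\theta_{1}\left(  d_{1},d_{2}\right)  $ (it passes both leg tests trivially), whence $\theta_{1}\overset{\cdot}{-}\theta_{1}=\lambda_{d:D}\theta_{1}\left(  0,0\right)  $ is the constant, i.e. zero, element of $\mathbf{T}_{\theta_{1}\left(  0,0\right)  }\left(  M\right)  $.

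I expect the only real care to be needed in the bookkeeping of the first step: one must verify both that each reparametrising map genuinely respects the nilpotency relations defining its target, and that the common base point $\theta_{1}\left(  0,0\right)  =\theta_{2}\left(  0,0\right)  =\theta_{3}\left(  0,0\right)  $ (forced by the hypotheses restricting the $\theta_{i}$ to $D^{2}\left\{  \left(  1,2\right)  \right\}  $) makes the three strong differences tangent vectors at one and the same point, so that their addition in $\mathbf{T}_{x}\left(  M\right)  $ is legitimate. Once these are settled, the remainder is the standard leg-by-leg identification already rehearsed in \S\ref{s4} and \S\ref{s5}.
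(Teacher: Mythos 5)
Your proposal is correct and takes essentially the same route as the paper's own proof: both funnel everything through the single map $\mathfrak{m}_{\left(\theta_{1},\theta_{2},\theta_{3}\right)}$ of Corollary \ref{cl5.3}, identify $\mathfrak{l}$ for the pair of strong differences and $\mathfrak{m}_{\left(\theta_{1},\theta_{3}\right)}$ as restrictions of this one map, and conclude by evaluating on the diagonal $\left(0,0,d,d\right)$. Your write-up is if anything slightly more scrupulous than the paper's (the explicit well-definedness checks of the reparametrising maps, the coordinate swap in the $\mathfrak{l}$-identification, and the explicit verification that $\theta_{1}\overset{\cdot}{-}\theta_{1}=0$ for the special case), but the underlying argument is the same.
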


\begin{proof}
Since
\begin{align*}
&  \lambda_{\left(  d_{1},d_{2}\right)  :D^{2}\left\{  \left(  1,2\right)
\right\}  }\mathfrak{l}_{\left(  \theta_{1}\overset{\cdot}{-}\theta_{2}%
,\theta_{2}\overset{\cdot}{-}\theta_{3}\right)  }\left(  d_{1},d_{2}\right) \\
&  =\lambda_{\left(  d_{1},d_{2}\right)  :D^{2}\left\{  \left(  1,2\right)
\right\}  }\mathfrak{m}_{\left(  \theta_{1},\theta_{2},\theta_{3}\right)
}\left(  0,0,d_{1},d_{2}\right)
\end{align*}
and
\begin{align*}
&  \lambda_{\left(  d_{1},d_{2},d_{3}\right)  :D^{3}\left\{  \left(
1,3\right)  ,\left(  2,3\right)  \right\}  }\mathfrak{m}_{\left(  \theta
_{1},\theta_{3}\right)  }\left(  d_{1},d_{2},d_{3}\right) \\
&  =\lambda_{\left(  d_{1},d_{2},d_{3}\right)  :D^{3}\left\{  \left(
1,3\right)  ,\left(  2,3\right)  \right\}  }\mathfrak{m}_{\left(  \theta
_{1},\theta_{2},\theta_{3}\right)  }\left(  d_{1},d_{2},d_{3},d_{3}\right)
\end{align*}
we have
\begin{align*}
&  \lambda_{d:D}\left(  \left(  \theta_{1}\overset{\cdot}{-}\theta_{2}\right)
+\left(  \theta_{2}\overset{\cdot}{-}\theta_{3}\right)  \right)  \left(
d\right) \\
&  =\lambda_{d:D}\mathfrak{l}_{\left(  \theta_{1}\overset{\cdot}{-}\theta
_{2},\theta_{2}\overset{\cdot}{-}\theta_{3}\right)  }\left(  d,d\right) \\
&  =\lambda_{d:D}\mathfrak{m}_{\left(  \theta_{1},\theta_{2},\theta
_{3}\right)  }\left(  0,0,d,d\right) \\
&  =\lambda_{d:D}\mathfrak{m}_{\left(  \theta_{1},\theta_{3}\right)  }\left(
0,0,d\right) \\
&  =\lambda_{d:D}\left(  \theta_{1}\overset{\cdot}{-}\theta_{3}\right)
\left(  d\right)
\end{align*}
This completes the proof.
\end{proof}

Now we define relative strong differences.

\begin{definition}
Let $M$\ be a microlinear set. We give three definitions:

\begin{itemize}
\item Given $\theta_{1},\theta_{2}:D^{3}\rightarrow M$ with
\[
\theta_{1}\circ\left(  \lambda_{\left(  d_{1},d_{2},d_{3}\right)
:D^{3}\left\{  \left(  1,2\right)  \right\}  }\left(  d_{1},d_{2}%
,d_{3}\right)  \right)  =\theta_{2}\circ\left(  \lambda_{\left(  d_{1}%
,d_{2},d_{3}\right)  :D^{3}\left\{  \left(  1,2\right)  \right\}  }\left(
d_{1},d_{2},d_{3}\right)  \right)
\]
we define $\theta_{1}\underset{3}{\overset{\cdot}{-}}\theta_{2}:D^{2}%
\rightarrow M$\ to be
\begin{align*}
&  \theta_{1}\underset{3}{\overset{\cdot}{-}}\theta_{2}\\
&  :\equiv\lambda_{\left(  d_{1},d_{2}\right)  :D^{2}}\left(
\begin{array}
[c]{c}%
\left(
\begin{array}
[c]{c}%
\left(  \lambda_{\left(  d_{1},d_{2}\right)  :D^{2}}\lambda_{d_{3}:D}%
\theta_{1}\left(  d_{1},d_{2},d_{3}\right)  \right)  \overset{\cdot}{-}\\
\left(  \lambda_{\left(  d_{1},d_{2}\right)  :D^{2}}\lambda_{d_{3}:D}%
\theta_{2}\left(  d_{1},d_{2},d_{3}\right)  \right)
\end{array}
\right) \\
\circ\left(  \lambda_{\left(  d_{1},d_{2}\right)  :D^{2}}\left(  d_{2}%
,d_{1}\right)  \right)
\end{array}
\right)  \left(  d_{2}\right)  \left(  d_{1}\right)
\end{align*}

\item Given $\theta_{1},\theta_{2}:D^{3}\rightarrow M$ with
\[
\theta_{1}\circ\left(  \lambda_{\left(  d_{1},d_{2},d_{3}\right)
:D^{3}\left\{  \left(  1,3\right)  \right\}  }\left(  d_{1},d_{2}%
,d_{3}\right)  \right)  =\theta_{2}\circ\left(  \lambda_{\left(  d_{1}%
,d_{2},d_{3}\right)  :D^{3}\left\{  \left(  1,3\right)  \right\}  }\left(
d_{1},d_{2},d_{3}\right)  \right)
\]
we define $\theta_{1}\underset{2}{\overset{\cdot}{-}}\theta_{2}:D^{2}%
\rightarrow M$\ to be
\[
\theta_{1}\underset{2}{\overset{\cdot}{-}}\theta_{2}:\equiv\theta_{1}%
\circ\left(  \lambda_{\left(  d_{1},d_{2},d_{3}\right)  :D^{3}}\left(
d_{2},d_{3},d_{1}\right)  \right)  \underset{3}{\overset{\cdot}{-}}\theta
_{2}\circ\left(  \lambda_{\left(  d_{1},d_{2},d_{3}\right)  :D^{3}}\left(
d_{2},d_{3},d_{1}\right)  \right)
\]

\item Given $\theta_{1},\theta_{2}:D^{3}\rightarrow M$ with
\[
\theta_{1}\circ\left(  \lambda_{\left(  d_{1},d_{2},d_{3}\right)
:D^{3}\left\{  \left(  2,3\right)  \right\}  }\left(  d_{1},d_{2}%
,d_{3}\right)  \right)  =\theta_{2}\circ\left(  \lambda_{\left(  d_{1}%
,d_{2},d_{3}\right)  :D^{3}\left\{  \left(  2,3\right)  \right\}  }\left(
d_{1},d_{2},d_{3}\right)  \right)
\]
we define $\theta_{1}\underset{1}{\overset{\cdot}{-}}\theta_{2}:D^{2}%
\rightarrow M$\ to be
\[
\theta_{1}\underset{1}{\overset{\cdot}{-}}\theta_{2}:\equiv\theta_{1}%
\circ\left(  \lambda_{\left(  d_{1},d_{2},d_{3}\right)  :D^{3}}\left(
d_{3},d_{1},d_{2}\right)  \right)  \underset{3}{\overset{\cdot}{-}}\theta
_{2}\circ\left(  \lambda_{\left(  d_{1},d_{2},d_{3}\right)  :D^{3}}\left(
d_{3},d_{1},d_{2}\right)  \right)
\]

\end{itemize}
\end{definition}

\begin{lemma}
\label{l5.4}The diagram%
\begin{equation}%
\begin{array}
[c]{ccccc}
&  & D^{n+m_{1}+m_{2}}\left\{
\begin{array}
[c]{c}%
\left(  n+i,n+m_{1}+j\right)  \mid\\
1\leq i\leq m_{1},1\leq j\leq m_{2}%
\end{array}
\right\}  &  & \\
& \nearrow &  & \nwarrow & \\
D^{n+m_{1}+m_{2}}\left\{
\begin{array}
[c]{c}%
\left(  n+1\right)  ,...,\\
\left(  n+m_{1}\right)
\end{array}
\right\}  &  &  &  & D^{n+m_{1}+m_{2}}\left\{
\begin{array}
[c]{c}%
\left(  n+m_{1}+1\right)  ,...,\\
\left(  n+m_{1}+m_{2}\right)
\end{array}
\right\} \\
& \nwarrow &  & \nearrow & \\
&  & D^{n} &  &
\end{array}
\label{l5.4.1}%
\end{equation}
with the four arrows being the canonical injections is a quasi-colimit diagram.
\end{lemma}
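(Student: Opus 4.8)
The plan is to apply the contravariant functor $\rightarrow\mathbb{R}$ to the diagram (\ref{l5.4.1}) and invoke the Remark following Axiom \ref{a1.1}, which reduces the claim to showing that the resulting diagram of function types is a limit diagram. Writing $\mathcal{T}$ for the top type, $\mathcal{B}_{1}$ and $\mathcal{B}_{2}$ for the lower-left and lower-right types, and $D^{n}$ for the apex, I must show that $\mathcal{T}\rightarrow\mathbb{R}$ is the pullback of $\mathcal{B}_{1}\rightarrow\mathbb{R}$ and $\mathcal{B}_{2}\rightarrow\mathbb{R}$ over $D^{n}\rightarrow\mathbb{R}$; that is, every pair of functions $\gamma^{1}:\mathcal{B}_{1}\rightarrow\mathbb{R}$ and $\gamma^{2}:\mathcal{B}_{2}\rightarrow\mathbb{R}$ agreeing after restriction along the two injections from $D^{n}$ extends to a homotopically unique $\gamma:\mathcal{T}\rightarrow\mathbb{R}$. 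This is exactly the pattern already carried out in Lemmas \ref{l5.2} and \ref{l5.3}, only with more indices.

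First I would use Axiom \ref{a1.1} to expand every function on each corner as a multilinear polynomial in the relevant coordinates, each variable occurring to degree at most one because $d_{i}^{2}=0$. A function on $D^{n}$ is then a linear combination of the monomials $\prod_{i\in S}d_{i}$ indexed by subsets $S\subseteq\{1,\dots,n\}$; a function on $\mathcal{B}_{2}$, whose free coordinates are those of $D^{n}$ together with the first block $n+1,\dots,n+m_{1}$, is indexed by pairs $(S,U)$ with $U\subseteq\{1,\dots,m_{1}\}$; a function on $\mathcal{B}_{1}$, whose free coordinates are those of $D^{n}$ together with the second block, is indexed by pairs $(S,T)$ with $T\subseteq\{1,\dots,m_{2}\}$; and a function on $\mathcal{T}$ is indexed by triples $(S,U,T)$ subject to the single constraint that $U$ and $T$ are not both nonempty, since every product $d_{n+i}d_{n+m_{1}+j}$ of a first-block with a second-block variable vanishes on $\mathcal{T}$.

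Next I would read off the restriction maps: the injection $D^{n}\hookrightarrow\mathcal{B}_{1}$ (respectively $\mathcal{B}_{2}$) sets the remaining block to zero, so restricting to $D^{n}$ retains precisely the $(S,\varnothing)$-coefficients. The hypothesis that $\gamma^{1}$ and $\gamma^{2}$ agree on $D^{n}$ therefore forces their pure-$D^{n}$ coefficients $a_{S}$ to coincide for every $S$. I then define $\gamma:\mathcal{T}\rightarrow\mathbb{R}$ by assigning to the monomial $(S,\varnothing,\varnothing)$ this common coefficient, to $(S,U,\varnothing)$ with $U\neq\varnothing$ the corresponding coefficient of $\gamma^{2}$, and to $(S,\varnothing,T)$ with $T\neq\varnothing$ the corresponding coefficient of $\gamma^{1}$. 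Since the surviving monomials of $\mathcal{T}$ are exactly the disjoint union of these three families, the prescription is well defined and unique, and composing $\gamma$ with the injection $\mathcal{B}_{1}\hookrightarrow\mathcal{T}$, along which the first block vanishes, recovers $\gamma^{1}$, while composing with $\mathcal{B}_{2}\hookrightarrow\mathcal{T}$ recovers $\gamma^{2}$.

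The conceptual content that makes all of this go through is that the monomial basis of the top algebra decomposes as the pushout of the bases of the two legs over the base, which one can double-check by the dimension count $2^{n}(2^{m_{1}}+2^{m_{2}}-1)=2^{n}2^{m_{1}}+2^{n}2^{m_{2}}-2^{n}$. I expect the only genuine obstacle to be notational: organizing the coefficients in the general indexed setting and verifying the compatibility relations, exactly as in the proofs of Lemmas \ref{l5.2} and \ref{l5.3} but without the luxury of writing every coefficient out by hand.
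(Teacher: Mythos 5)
Your proposal is correct. Note that the paper itself states Lemma \ref{l5.4} \emph{without} proof, so there is no official argument to compare against; the natural benchmark is the paper's proofs of Lemmas \ref{l5.2}, \ref{l5.3} and \ref{l5.5}, which all follow exactly the pattern you use: invoke Axiom \ref{a1.1} to expand each function as a square-free polynomial, extract coefficient identities from the compatibility conditions, and assemble a function on the top object. Your version is the same method executed with better bookkeeping: indexing the monomial basis of each corner by subsets ($S$, $(S,U)$, $(S,T)$, and triples $(S,U,T)$ with $U=\varnothing$ or $T=\varnothing$) replaces the paper's explicit coefficient lists, which is what lets the argument scale to arbitrary $n$, $m_{1}$, $m_{2}$, and you additionally make the uniqueness half of the pullback property explicit (the paper's proofs of the neighboring lemmas only ever exhibit existence, leaving uniqueness implicit in the determination of coefficients). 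The one point worth stating rather than leaving tacit is that all the types involved are sets (functions into the set $\mathbb{R}$), so the homotopy pullback condition required by the Remark after Axiom \ref{a1.1} reduces to the set-level bijection you establish between functions on the top object and compatible pairs $\left(  \gamma^{1},\gamma^{2}\right)  $; with that observation added, your argument is complete.
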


\begin{corollary}
\label{cl5.4}Let $M$\ be a microlinear set. Given $\theta_{1},\theta
_{2}:D^{n+m_{1}+m_{2}}\rightarrow M$,
\begin{align*}
\theta_{1}  &  \mid D^{n+m_{1}+m_{2}}\left\{  \left(  n+i,n+m_{1}+j\right)
\mid1\leq i\leq m_{1},1\leq j\leq m_{2}\right\} \\
&  =\theta_{2}\mid D^{n+m_{1}+m_{2}}\left\{  \left(  n+i,n+m_{1}+j\right)
\mid1\leq i\leq m_{1},1\leq j\leq m_{2}\right\}
\end{align*}
obtains iff both%
\[
\theta_{1}\mid D^{n+m_{1}+m_{2}}\left\{  \left(  n+1\right)  ,...,\left(
n+m_{1}\right)  \right\}  =\theta_{2}\mid D^{n+m_{1}+m_{2}}\left\{  \left(
n+1\right)  ,...,\left(  n+m_{1}\right)  \right\}
\]
and%
\[
\theta_{1}\mid D^{n+m_{1}+m_{2}}\left\{  \left(  n+m_{1}+1\right)
,...,\left(  n+m_{1}+m_{2}\right)  \right\}  =\theta_{2}\mid D^{n+m_{1}+m_{2}%
}\left\{  \left(  n+m_{1}+1\right)  ,...,\left(  n+m_{1}+m_{2}\right)
\right\}
\]
obtain.
\end{corollary}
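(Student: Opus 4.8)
The plan is to deduce the Corollary from Lemma \ref{l5.4} by feeding its quasi-colimit diagram into the defining property of a microlinear set, exactly as the earlier corollaries of this section are obtained from their quasi-colimit lemmas. First I would fix notation: write $A$ for the apex $D^{n+m_{1}+m_{2}}\{(n+i,n+m_{1}+j)\mid 1\leq i\leq m_{1},\ 1\leq j\leq m_{2}\}$, write $L$ and $R$ for the two lateral objects carrying the vanishing conditions on the index blocks $\{n+1,\dots,n+m_{1}\}$ and $\{n+m_{1}+1,\dots,n+m_{1}+m_{2}\}$ respectively, and write $B$ for the base $D^{n}$. The four canonical injections realise $L$ and $R$ as subtypes of $A$ and $B$ as a subtype of each of $L$ and $R$ (intuitively $B=L\cap R$ inside $D^{n+m_{1}+m_{2}}$), since forcing one whole block to vanish makes every mixed product in the definition of $A$ automatically zero. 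Because each inclusion $L\hookrightarrow D^{n+m_{1}+m_{2}}$ and $R\hookrightarrow D^{n+m_{1}+m_{2}}$ factors through $A$, restriction to $L$ or $R$ agrees with first restricting $\theta_{i}$ to $A$ and then restricting further; so on setting $\phi_{i}:\equiv\theta_{i}\mid A$ the whole statement reduces to the assertion that $\phi_{1}=\phi_{2}$ holds iff both $\phi_{1}\mid L=\phi_{2}\mid L$ and $\phi_{1}\mid R=\phi_{2}\mid R$ hold.

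Next I would exponentiate the pushout square of Lemma \ref{l5.4} over $M$. Since $M$ is a set we have $\left\Vert M\right\Vert _{0}=M$, and microlinearity turns this quasi-colimit into a limit square with every arrow reversed: the type $A\to M$ is the pullback of $L\to M$ and $R\to M$ over $B\to M$, the comparison map sending $\phi$ to the pair $(\phi\mid L,\phi\mid R)$ equipped with the witness that the two restrictions agree on $B$. The forward implication is then immediate and does not even require this machinery: if $\phi_{1}=\phi_{2}$, then restricting the common value to the subtypes $L$ and $R$ yields the two displayed equalities by plain functoriality of precomposition.

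For the substantive backward implication I would exploit that the comparison map is an equivalence, hence an embedding, and therefore reflects identifications; thus $\phi_{1}=\phi_{2}$ is logically equivalent to the equality of the pulled-back data $(\phi_{1}\mid L,\phi_{1}\mid R)$ and $(\phi_{2}\mid L,\phi_{2}\mid R)$ in the pullback type. The hypotheses $\phi_{1}\mid L=\phi_{2}\mid L$ and $\phi_{1}\mid R=\phi_{2}\mid R$ furnish equalities in the two factors, and it remains to check that these assemble into a single identification of the full pullback data. This is the one place where set truncation does the real work: the compatibility component lives in the type $(\,\cdot\,)\mid B=(\,\cdot\,)\mid B$, which is a mere proposition because $B\to M$ is a set, so it is determined up to the relevant path and contributes nothing further. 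I expect this coherence bookkeeping—namely verifying that the projection of the pullback onto $(L\to M)\times(R\to M)$ is an embedding precisely because $M$ is $0$-truncated—to be the only genuinely delicate point; everything else is the by-now routine translation between a quasi-colimit of small objects and a limit of mapping types.
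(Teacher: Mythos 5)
Your proposal is correct and follows essentially the same route as the paper, which states Corollary \ref{cl5.4} as an immediate consequence of Lemma \ref{l5.4}: exponentiate the quasi-colimit diagram over the microlinear set $M$ to obtain a pullback of mapping types, so that a map out of the apex is determined by its restrictions to the two lateral subtypes. Your additional observations—that the inclusions of the lateral objects factor through the apex, and that the compatibility component over $D^{n}$ is a mere proposition because $M$ is a set—are exactly the details the paper leaves implicit.
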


\begin{proposition}
\label{p5.5}Let $M$\ be a microlinear set and $\theta_{1},\theta_{2}%
,\theta_{3},\theta_{4}:D^{3}\rightarrow M$. Then we have the following:

\begin{itemize}
\item Let us suppose that the identities
\begin{align*}
\theta_{1}\circ\left(  \lambda_{\left(  d_{1},d_{2},d_{3}\right)
:D^{3}\left\{  \left(  2,3\right)  \right\}  }\left(  d_{1},d_{2}%
,d_{3}\right)  \right)   &  =\theta_{2}\circ\left(  \lambda_{\left(
d_{1},d_{2},d_{3}\right)  :D^{3}\left\{  \left(  2,3\right)  \right\}
}\left(  d_{1},d_{2},d_{3}\right)  \right) \\
\theta_{3}\circ\left(  \lambda_{\left(  d_{1},d_{2},d_{3}\right)
:D^{3}\left\{  \left(  2,3\right)  \right\}  }\left(  d_{1},d_{2}%
,d_{3}\right)  \right)   &  =\theta_{4}\circ\left(  \lambda_{\left(
d_{1},d_{2},d_{3}\right)  :D^{3}\left\{  \left(  2,3\right)  \right\}
}\left(  d_{1},d_{2},d_{3}\right)  \right)
\end{align*}
hold so that the strong differences
\begin{align*}
&  \theta_{1}\underset{1}{\overset{\cdot}{-}}\theta_{2}\\
&  \theta_{3}\underset{1}{\overset{\cdot}{-}}\theta_{4}%
\end{align*}
are to be defined. If the identies
\begin{align}
\theta_{1}\circ\left(  \lambda_{\left(  d_{1},d_{2},d_{3}\right)
:D^{3}\left\{  \left(  1,2\right)  ,\left(  1,3\right)  \right\}  }\left(
d_{1},d_{2},d_{3}\right)  \right)   &  =\theta_{3}\circ\left(  \lambda
_{\left(  d_{1},d_{2},d_{3}\right)  :D^{3}\left\{  \left(  1,2\right)
,\left(  1,3\right)  \right\}  }\left(  d_{1},d_{2},d_{3}\right)  \right)
\label{p5.5.0.1}\\
\theta_{2}\circ\left(  \lambda_{\left(  d_{1},d_{2},d_{3}\right)
:D^{3}\left\{  \left(  1,2\right)  ,\left(  1,3\right)  \right\}  }\left(
d_{1},d_{2},d_{3}\right)  \right)   &  =\theta_{4}\circ\left(  \lambda
_{\left(  d_{1},d_{2},d_{3}\right)  :D^{3}\left\{  \left(  1,2\right)
,\left(  1,3\right)  \right\}  }\left(  d_{1},d_{2},d_{3}\right)  \right)
\label{p5.5.0.2}%
\end{align}
obtain, then the identity
\[
\left(  \theta_{1}\underset{1}{\overset{\cdot}{-}}\theta_{2}\right)
\circ\left(  \lambda_{\left(  d_{1},d_{2}\right)  :D^{2}\left\{  \left(
1,2\right)  \right\}  }\left(  d_{1},d_{2}\right)  \right)  =\left(
\theta_{3}\underset{1}{\overset{\cdot}{-}}\theta_{4}\right)  \circ\left(
\lambda_{\left(  d_{1},d_{2}\right)  :D^{2}\left\{  \left(  1,2\right)
\right\}  }\left(  d_{1},d_{2}\right)  \right)
\]
obtains so that the strong difference
\[
\left(  \theta_{1}\underset{1}{\overset{\cdot}{-}}\theta_{2}\right)
\overset{\cdot}{-}\left(  \theta_{3}\underset{1}{\overset{\cdot}{-}}\theta
_{4}\right)
\]
is to be defined.

\item Let us suppose that the identities
\begin{align*}
\theta_{1}\circ\left(  \lambda_{\left(  d_{1},d_{2},d_{3}\right)
:D^{3}\left\{  \left(  1,3\right)  \right\}  }\left(  d_{1},d_{2}%
,d_{3}\right)  \right)   &  =\theta_{2}\circ\left(  \lambda_{\left(
d_{1},d_{2},d_{3}\right)  :D^{3}\left\{  \left(  1,3\right)  \right\}
}\left(  d_{1},d_{2},d_{3}\right)  \right) \\
\theta_{3}\circ\left(  \lambda_{\left(  d_{1},d_{2},d_{3}\right)
:D^{3}\left\{  \left(  1,3\right)  \right\}  }\left(  d_{1},d_{2}%
,d_{3}\right)  \right)   &  =\theta_{4}\circ\left(  \lambda_{\left(
d_{1},d_{2},d_{3}\right)  :D^{3}\left\{  \left(  1,3\right)  \right\}
}\left(  d_{1},d_{2},d_{3}\right)  \right)
\end{align*}
hold so that the strong differences
\begin{align*}
&  \theta_{1}\underset{2}{\overset{\cdot}{-}}\theta_{2}\\
&  \theta_{3}\underset{2}{\overset{\cdot}{-}}\theta_{4}%
\end{align*}
are to be defined. If the identies
\begin{align}
\theta_{1}\circ\left(  \lambda_{\left(  d_{1},d_{2},d_{3}\right)
:D^{3}\left\{  \left(  1,2\right)  ,\left(  2,3\right)  \right\}  }\left(
d_{1},d_{2},d_{3}\right)  \right)   &  =\theta_{3}\circ\left(  \lambda
_{\left(  d_{1},d_{2},d_{3}\right)  :D^{3}\left\{  \left(  1,2\right)
,\left(  2,3\right)  \right\}  }\left(  d_{1},d_{2},d_{3}\right)  \right)
\label{p5.5.0.3}\\
\theta_{2}\circ\left(  \lambda_{\left(  d_{1},d_{2},d_{3}\right)
:D^{3}\left\{  \left(  1,2\right)  ,\left(  2,3\right)  \right\}  }\left(
d_{1},d_{2},d_{3}\right)  \right)   &  =\theta_{4}\circ\left(  \lambda
_{\left(  d_{1},d_{2},d_{3}\right)  :D^{3}\left\{  \left(  1,2\right)
,\left(  2,3\right)  \right\}  }\left(  d_{1},d_{2},d_{3}\right)  \right)
\label{p5.5.0.4}%
\end{align}
obtain, then the identity
\[
\left(  \theta_{1}\underset{2}{\overset{\cdot}{-}}\theta_{2}\right)
\circ\left(  \lambda_{\left(  d_{1},d_{2}\right)  :D^{2}\left\{  \left(
1,2\right)  \right\}  }\left(  d_{1},d_{2}\right)  \right)  =\left(
\theta_{3}\underset{2}{\overset{\cdot}{-}}\theta_{4}\right)  \circ\left(
\lambda_{\left(  d_{1},d_{2}\right)  :D^{2}\left\{  \left(  1,2\right)
\right\}  }\left(  d_{1},d_{2}\right)  \right)
\]
obtains so that the strong difference
\[
\left(  \theta_{1}\underset{2}{\overset{\cdot}{-}}\theta_{2}\right)
\overset{\cdot}{-}\left(  \theta_{3}\underset{2}{\overset{\cdot}{-}}\theta
_{4}\right)
\]
is to be defined.

\item Let us suppose that the identities
\begin{align*}
\theta_{1}\circ\left(  \lambda_{\left(  d_{1},d_{2},d_{3}\right)
:D^{3}\left\{  \left(  1,2\right)  \right\}  }\left(  d_{1},d_{2}%
,d_{3}\right)  \right)   &  =\theta_{2}\circ\left(  \lambda_{\left(
d_{1},d_{2},d_{3}\right)  :D^{3}\left\{  \left(  1,2\right)  \right\}
}\left(  d_{1},d_{2},d_{3}\right)  \right) \\
\theta_{3}\circ\left(  \lambda_{\left(  d_{1},d_{2},d_{3}\right)
:D^{3}\left\{  \left(  1,2\right)  \right\}  }\left(  d_{1},d_{2}%
,d_{3}\right)  \right)   &  =\theta_{4}\circ\left(  \lambda_{\left(
d_{1},d_{2},d_{3}\right)  :D^{3}\left\{  \left(  1,2\right)  \right\}
}\left(  d_{1},d_{2},d_{3}\right)  \right)
\end{align*}
hold so that the strong differences
\begin{align*}
&  \theta_{1}\underset{3}{\overset{\cdot}{-}}\theta_{2}\\
&  \theta_{3}\underset{3}{\overset{\cdot}{-}}\theta_{4}%
\end{align*}
are to be defined. If the identies
\begin{align}
\theta_{1}\circ\left(  \lambda_{\left(  d_{1},d_{2},d_{3}\right)
:D^{3}\left\{  \left(  1,3\right)  ,\left(  2,3\right)  \right\}  }\left(
d_{1},d_{2},d_{3}\right)  \right)   &  =\theta_{3}\circ\left(  \lambda
_{\left(  d_{1},d_{2},d_{3}\right)  :D^{3}\left\{  \left(  1,3\right)
,\left(  2,3\right)  \right\}  }\left(  d_{1},d_{2},d_{3}\right)  \right)
\label{p5.5.0.5}\\
\theta_{2}\circ\left(  \lambda_{\left(  d_{1},d_{2},d_{3}\right)
:D^{3}\left\{  \left(  1,3\right)  ,\left(  2,3\right)  \right\}  }\left(
d_{1},d_{2},d_{3}\right)  \right)   &  =\theta_{4}\circ\left(  \lambda
_{\left(  d_{1},d_{2},d_{3}\right)  :D^{3}\left\{  \left(  1,3\right)
,\left(  2,3\right)  \right\}  }\left(  d_{1},d_{2},d_{3}\right)  \right)
\label{p5.5.0.6}%
\end{align}
obtain, then the identity
\[
\left(  \theta_{1}\underset{3}{\overset{\cdot}{-}}\theta_{2}\right)
\circ\left(  \lambda_{\left(  d_{1},d_{2}\right)  :D^{2}\left\{  \left(
1,2\right)  \right\}  }\left(  d_{1},d_{2}\right)  \right)  =\left(
\theta_{3}\underset{3}{\overset{\cdot}{-}}\theta_{4}\right)  \circ\left(
\lambda_{\left(  d_{1},d_{2}\right)  :D^{2}\left\{  \left(  1,2\right)
\right\}  }\left(  d_{1},d_{2}\right)  \right)
\]
obtains so that the strong difference
\[
\left(  \theta_{1}\underset{3}{\overset{\cdot}{-}}\theta_{2}\right)
\overset{\cdot}{-}\left(  \theta_{3}\underset{3}{\overset{\cdot}{-}}\theta
_{4}\right)
\]
is to be defined.
\end{itemize}
\end{proposition}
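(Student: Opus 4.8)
The plan is to reduce the first two assertions to the third and then to establish the third directly. By the definitions of $\underset{1}{\overset{\cdot}{-}}$ and $\underset{2}{\overset{\cdot}{-}}$, each is $\underset{3}{\overset{\cdot}{-}}$ applied to the four maps precomposed with the permutation $\lambda_{(d_1,d_2,d_3):D^{3}}(d_3,d_1,d_2)$, respectively $\lambda_{(d_1,d_2,d_3):D^{3}}(d_2,d_3,d_1)$, of $D^{3}$. First I would check that these permutations carry the hypotheses of the first (resp.\ second) part precisely onto those of the third: the base constraint $(2,3)$ (resp.\ $(1,3)$) is sent to $(1,2)$, so that the two relative strong differences are defined, while the pair of conditions (\ref{p5.5.0.1})--(\ref{p5.5.0.2}) (resp.\ (\ref{p5.5.0.3})--(\ref{p5.5.0.4})) is sent to (\ref{p5.5.0.5})--(\ref{p5.5.0.6}). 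Since the conclusion is in each case merely that the two relative strong differences agree on $D^{2}\{(1,2)\}$, and this agreement is invariant under permutation of the three infinitesimal coordinates, the first two parts follow from the third.

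For the third part I would first record a naturality property of the ordinary strong difference: for microlinear sets $M,M'$, a map $g:M\rightarrow M'$, and $\sigma_1,\sigma_2:D^{2}\rightarrow M$ agreeing on $D^{2}\{(1,2)\}$, the composites $g\circ\sigma_1$ and $g\circ\sigma_2$ agree on $D^{2}\{(1,2)\}$ and
\[
(g\circ\sigma_1)\overset{\cdot}{-}(g\circ\sigma_2)=g\circ(\sigma_1\overset{\cdot}{-}\sigma_2).
\]
This is immediate from Corollary \ref{cl5.1}: the map $g\circ\mathfrak{m}_{(\sigma_1,\sigma_2)}$ satisfies the two identities characterizing $\mathfrak{m}_{(g\circ\sigma_1,g\circ\sigma_2)}$, so the homotopical uniqueness of the filler granted by the quasi-colimit of Lemma \ref{l5.l} forces $\mathfrak{m}_{(g\circ\sigma_1,g\circ\sigma_2)}=g\circ\mathfrak{m}_{(\sigma_1,\sigma_2)}$, and evaluation at $(0,0,d)$ yields the displayed identity.

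Next I would unwind $\underset{3}{\overset{\cdot}{-}}$. Setting $A_i:\equiv\lambda_{(d_1,d_2):D^{2}}\lambda_{d_3:D}\theta_i(d_1,d_2,d_3):D^{2}\rightarrow(D\rightarrow M)$, the base hypotheses say that $A_1,A_2$ and $A_3,A_4$ agree on $D^{2}\{(1,2)\}$; since $D\rightarrow M$ is a microlinear set by Proposition \ref{p3.1}, the strong differences $A_1\overset{\cdot}{-}A_2$ and $A_3\overset{\cdot}{-}A_4$ are defined in $D\rightarrow(D\rightarrow M)$, and $\theta_1\underset{3}{\overset{\cdot}{-}}\theta_2$ is $A_1\overset{\cdot}{-}A_2$ uncurried to $D^{2}\rightarrow M$ with its two $D$-arguments transposed. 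Applying Corollary \ref{cl5.4} with $n=0$ and $m_1=m_2=1$ then reduces the required agreement on $D^{2}\{(1,2)\}$ to agreement on the two slices $D^{2}\{(1)\}$ and $D^{2}\{(2)\}$, i.e.\ to setting each of the two arguments to $0$ in turn. On the slice where the strong-difference direction is set to $0$, the value lands on the $0$-diagonal of $\mathfrak{m}_{(A_1,A_2)}$ and collapses to $\theta_2(0,0,-)$, respectively $\theta_4(0,0,-)$; these coincide because $(0,0,d):D^{3}\{(1,3),(2,3)\}$, so (\ref{p5.5.0.6}) applies. On the slice where the spectator direction is set to $0$, I would invoke the evaluation map $\mathrm{ev}_{0}:(D\rightarrow M)\rightarrow M$ and the naturality property to obtain $\mathrm{ev}_{0}\circ(A_1\overset{\cdot}{-}A_2)=\theta_{1}^{0}\overset{\cdot}{-}\theta_{2}^{0}$, where $\theta_{i}^{0}:\equiv\lambda_{(d_1,d_2):D^{2}}\theta_i(d_1,d_2,0)$; since $(d_1,d_2,0):D^{3}\{(1,3),(2,3)\}$ for every $(d_1,d_2):D^{2}$, conditions (\ref{p5.5.0.5}) and (\ref{p5.5.0.6}) give $\theta_{1}^{0}=\theta_{3}^{0}$ and $\theta_{2}^{0}=\theta_{4}^{0}$, so the two strong differences coincide and the slices agree.

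I expect the main obstacle to be less any single deduction than the bookkeeping in the unwinding: keeping straight which of the two $D$-arguments of $A_1\overset{\cdot}{-}A_2$ is the strong-difference direction and which is the spectator, so that the two slices supplied by Corollary \ref{cl5.4} are matched to the correct computations. The one genuinely substantive ingredient is the naturality of $\overset{\cdot}{-}$ under post-composition by $\mathrm{ev}_{0}$, which is precisely what turns the transverse slice, on which no single $\theta_i$ computes the value, into an honest strong difference of the restrictions $\theta_{i}^{0}$, to which (\ref{p5.5.0.5})--(\ref{p5.5.0.6}) directly apply.
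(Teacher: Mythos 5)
Your proof is correct, and it shares the paper's central device: Corollary \ref{cl5.4} with $n=0$, $m_{1}=m_{2}=1$ reduces agreement on $D^{2}\left\{ \left( 1,2\right) \right\} $ to agreement on the two axes, of which the axis where the difference parameter vanishes is settled by evaluating the filler directly. But you diverge from the paper in two genuine respects. First, the paper proves only the first bullet (for $\underset{1}{\overset{\cdot}{-}}$) and leaves the other two to the reader; you prove the third bullet and transport the first two along the permutations occurring in the very definitions of $\underset{1}{\overset{\cdot}{-}}$ and $\underset{2}{\overset{\cdot}{-}}$. Your check that these permutations carry the base constraints $\left( 2,3\right) $, resp.\ $\left( 1,3\right) $, to $\left( 1,2\right) $ and the pairs (\ref{p5.5.0.1})--(\ref{p5.5.0.2}), resp.\ (\ref{p5.5.0.3})--(\ref{p5.5.0.4}), to (\ref{p5.5.0.5})--(\ref{p5.5.0.6}) is accurate, and the conclusions of the first two bullets are by definition the conclusion of the third applied to the permuted data, so the reduction is complete and makes the cases the paper omits rigorous for free. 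Second, and more substantively, for the remaining axis the paper restricts a four-dimensional quasi-colimit diagram imported from Lemma 2.1 of \cite{ni1} (vertex $D^{4}\left\{ \left( 2,4\right) ,\left( 3,4\right) \right\} $) to the locus where the spectator coordinate vanishes, relying on the restricted diagram being again a quasi-colimit; you instead prove naturality of $\overset{\cdot}{-}$ under post-composition, $\left( g\circ \sigma _{1}\right) \overset{\cdot}{-}\left( g\circ \sigma _{2}\right) =g\circ \left( \sigma _{1}\overset{\cdot}{-}\sigma _{2}\right) $, via uniqueness of the filler $\mathfrak{m}$ of Lemma \ref{l5.l}, and apply it to $g=\mathrm{ev}_{0}:\left( D\rightarrow M\right) \rightarrow M$, turning that axis into an ordinary strong difference of the restrictions $\theta _{i}^{0}$, to which (\ref{p5.5.0.5})--(\ref{p5.5.0.6}) apply at once. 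Your route buys a reusable functoriality principle, needs microlinearity of $D\rightarrow M$ (Proposition \ref{p3.1}), and avoids both the external diagram of \cite{ni1} and the paper's second application of Corollary \ref{cl5.4} decomposing the hypotheses into slice conditions; the paper's route stays entirely within explicit simplicial infinitesimal objects. One caveat: your naturality lemma uses the uniqueness, not merely the existence, of $\mathfrak{m}$, which Corollary \ref{cl5.1} does not state; it does follow from microlinearity, since the exponentiated diagram is a limit, and it is the same tacit uniqueness the paper exploits in Propositions \ref{p5.1}--\ref{p5.4}, so this is at most a presentational gap.
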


\begin{proof}
We deal only with the first statement, safely leaving the second and third
ones to the reader. We have to show that%
\[
\left(  \theta_{1}\underset{1}{\overset{\cdot}{-}}\theta_{2}\right)
\circ\left(  \lambda_{\left(  d_{1},d_{2}\right)  :D^{2}\left\{  \left(
1,2\right)  \right\}  }\left(  d_{1},d_{2}\right)  \right)  =\left(
\theta_{3}\underset{1}{\overset{\cdot}{-}}\theta_{4}\right)  \circ\left(
\lambda_{\left(  d_{1},d_{2}\right)  :D^{2}\left\{  \left(  1,2\right)
\right\}  }\left(  d_{1},d_{2}\right)  \right)
\]
which is, by dint of Corollary \ref{cl5.4} with respect to the quasi-colimit
diagram%
\[%
\begin{array}
[c]{ccccc}
&  & D^{2}\left\{  \left(  1,2\right)  \right\}  &  & \\
& \nearrow &  & \nwarrow & \\
D^{2}\left\{  \left(  1\right)  \right\}  &  &  &  & D^{2}\left\{  \left(
2\right)  \right\} \\
& \nwarrow &  & \nearrow & \\
&  & D^{2}\left\{  \left(  1\right)  ,\left(  2\right)  \right\}  &  &
\end{array}
\]
with the four arrows being the canonical injections (Lemma \ref{l5.4} with
$n=0$, $m_{1}=1$ and $m_{2}=1$), tantamount to showing that%
\begin{align}
\left(  \theta_{1}\underset{1}{\overset{\cdot}{-}}\theta_{2}\right)   &  \mid
D^{2}\left\{  \left(  1\right)  \right\}  =\left(  \theta_{3}\underset
{1}{\overset{\cdot}{-}}\theta_{4}\right)  \mid D^{2}\left\{  \left(  1\right)
\right\} \label{p5.5.1}\\
\left(  \theta_{1}\underset{1}{\overset{\cdot}{-}}\theta_{2}\right)   &  \mid
D^{2}\left\{  \left(  2\right)  \right\}  =\left(  \theta_{3}\underset
{1}{\overset{\cdot}{-}}\theta_{4}\right)  \mid D^{2}\left\{  \left(  2\right)
\right\}  \label{p5.5.2}%
\end{align}
Due to the quasi-colimit diagram
\[%
\begin{array}
[c]{ccccc}
&  & D^{3}\left\{  \left(  1,2\right)  ,\left(  1,3\right)  \right\}  &  & \\
& \nearrow &  & \nwarrow & \\
D^{3}\left\{  \left(  \left(  1\right)  \right)  \right\}  &  &  &  &
D^{3}\left\{  \left(  2\right)  ,\left(  3\right)  \right\} \\
& \nwarrow &  & \nearrow & \\
&  & D^{3}\left\{  \left(  1\right)  ,\left(  2\right)  ,\left(  3\right)
\right\}  &  &
\end{array}
\]
with the four arrows being the canonical injections (Lemma \ref{l5.4} with
$n=0$, $m_{1}=1$ and $m_{2}=2$), the condition (\ref{p5.5.0.1}) is equivalent
by dint of Corollary \ref{cl5.4} to the conditions
\begin{align}
\theta_{1}  &  \mid D^{3}\left\{  \left(  1\right)  \right\}  =\theta_{3}\mid
D^{3}\left\{  \left(  1\right)  \right\} \label{p5.5.3}\\
\theta_{1}  &  \mid D^{3}\left\{  \left(  2\right)  ,\left(  3\right)
\right\}  =\theta_{3}\mid D^{3}\left\{  \left(  2\right)  ,\left(  3\right)
\right\}  \label{p5.5.4}%
\end{align}
while the condition (\ref{p5.5.0.2}) is equivalent to the conditions
\begin{align}
\theta_{2}  &  \mid D^{3}\left\{  \left(  1\right)  \right\}  =\theta_{4}\mid
D^{3}\left\{  \left(  1\right)  \right\} \label{p5.5.5}\\
\theta_{2}  &  \mid D^{3}\left\{  \left(  2\right)  ,\left(  3\right)
\right\}  =\theta_{4}\mid D^{3}\left\{  \left(  2\right)  ,\left(  3\right)
\right\}  \label{p5.5.6}%
\end{align}
In order to show that (\ref{p5.5.1}) obtains, we note that the quasi-colimit
diagram (cf. Lemma 2.1 in \cite{ni1})%
\[%
\begin{array}
[c]{ccccc}
&  & D^{4}\left\{  \left(  2,4\right)  ,\left(  3,4\right)  \right\}  &  & \\
& \nearrow &  & \nwarrow & \\
D^{3} &  &  &  & D^{3}\\
& \nwarrow &  & \nearrow & \\
&  & D^{3}\left\{  \left(  2,3\right)  \right\}  &  &
\end{array}
\]
with the upper arrows being%
\begin{align*}
&  \lambda_{\left(  d_{1},d_{2},d_{3}\right)  :D^{3}}\left(  d_{1},d_{2}%
,d_{3},0\right) \\
&  \lambda_{\left(  d_{1},d_{2},d_{3}\right)  :D^{3}}\left(  d_{1},d_{2}%
,d_{3},d_{2}d_{3}\right)
\end{align*}
from left to right and the lower arrow being the canonical injections is to be
restricted to the quasi-colimit diagram
\[%
\begin{array}
[c]{ccccc}
&  & D^{4}\left\{  \left(  1\right)  ,\left(  2,4\right)  ,\left(  3,4\right)
\right\}  &  & \\
& \nearrow &  & \nwarrow & \\
D^{3}\left\{  \left(  1\right)  \right\}  &  &  &  & D^{3}\left\{  \left(
1\right)  \right\} \\
& \nwarrow &  & \nearrow & \\
&  & D^{3}\left\{  \left(  1\right)  ,\left(  2,3\right)  \right\}  &  &
\end{array}
\]
so that the conditions (\ref{p5,5,3}) and (\ref{p5.5.5}) imply (\ref{p5.5.1}).
It is easy to see that
\begin{align*}
&  \left(  \left(  \theta_{1}\overset{\cdot}{\underset{1}{-}}\theta
_{2}\right)  \mid D^{2}\left\{  \left(  2\right)  \right\}  \right)
\circ\left(  \lambda_{d:D}\left(  d,0\right)  \right) \\
&  =\left(  \theta_{1}\mid D^{3}\left\{  \left(  2\right)  ,\left(  3\right)
\right\}  \right)  \circ\left(  \lambda_{d:D}\left(  d,0,0\right)  \right)
=\left(  \theta_{2}\mid D^{3}\left\{  \left(  2\right)  ,\left(  3\right)
\right\}  \right)  \circ\left(  \lambda_{d:D}\left(  d,0,0\right)  \right) \\
&  \left(  \left(  \theta_{3}\overset{\cdot}{\underset{1}{-}}\theta
_{4}\right)  \mid D^{2}\left\{  \left(  2\right)  \right\}  \right)
\circ\left(  \lambda_{d:D}\left(  d,0\right)  \right) \\
&  =\left(  \theta_{3}\mid D^{3}\left\{  \left(  2\right)  ,\left(  3\right)
\right\}  \right)  \circ\left(  \lambda_{d:D}\left(  d,0,0\right)  \right)
=\left(  \theta_{4}\mid D^{3}\left\{  \left(  2\right)  ,\left(  3\right)
\right\}  \right)  \circ\left(  \lambda_{d:D}\left(  d,0,0\right)  \right)
\end{align*}
obtain so that (\ref{p5.5.4}) and (\ref{p5.5.6}) imply (\ref{p5.5.2}). This
completes the proof.
\end{proof}

\begin{lemma}
\label{l5.5}The diagram consisting of
\begin{align*}
\mathcal{G}  &  :\equiv D^{8}\left\{
\begin{array}
[c]{c}%
\left(  2,4\right)  ,\left(  3,4\right)  ,\left(  1,5\right)  ,\left(
3,5\right)  ,\left(  1,6\right)  ,\left(  2,6\right)  ,\left(  4,5\right)
,\left(  4,6\right)  ,\left(  5,6\right)  ,\\
\left(  1,7\right)  ,\left(  2,7\right)  ,\left(  3,7\right)  ,\left(
4,7\right)  ,\left(  5,7\right)  ,\left(  6,7\right)  ,\left(  7,8\right)  ,\\
\left(  1,8\right)  ,\left(  2,8\right)  ,\left(  3,8\right)  ,\left(
4,8\right)  ,\left(  5,8\right)  ,\left(  6,8\right)
\end{array}
\right\} \\
\mathcal{H}_{123}  &  :\equiv D^{3},\mathcal{H}_{132}:\equiv D^{3}%
,\mathcal{H}_{213}:\equiv D^{3},\mathcal{H}_{231}:\equiv D^{3},\mathcal{H}%
_{312}:\equiv D^{3},\mathcal{H}_{321}:\equiv D^{3}\\
\mathcal{K}_{123,132}^{1}  &  :\equiv D^{3}\left\{  \left(  2,3\right)
\right\}  ,\mathcal{K}_{231,321}^{1}:\equiv D^{3}\left\{  \left(  2,3\right)
\right\}  ,\mathcal{K}_{231,213}^{2}:\equiv D^{3}\left\{  \left(  1,3\right)
\right\}  ,\\
\mathcal{K}_{312,132}^{2}  &  :\equiv D^{3}\left\{  \left(  1,3\right)
\right\}  ,\mathcal{K}_{312,321}^{3}:\equiv D^{3}\left\{  \left(  1,2\right)
\right\}  ,\mathcal{K}_{123,213}^{3}:\equiv D^{3}\left\{  \left(  1,2\right)
\right\} \\
f_{123}  &  :\equiv\lambda_{\left(  d_{1},d_{2},d_{3}\right)  :D^{3}}\left(
d_{1},d_{2},d_{3},0,0,0,0,0\right)  :\mathcal{H}_{123}\rightarrow\mathcal{G}\\
f_{132}  &  :\equiv\lambda_{\left(  d_{1},d_{2},d_{3}\right)  :D^{3}}\left(
d_{1},d_{2},d_{3},d_{2}d_{3},0,0,0,0\right)  :\mathcal{H}_{132}\rightarrow
\mathcal{G}\\
f_{213}  &  :\equiv\lambda_{\left(  d_{1},d_{2},d_{3}\right)  :D^{3}}\left(
d_{1},d_{2},d_{3},0,0,d_{1}d_{2},0,0\right)  :\mathcal{H}_{213}\rightarrow
\mathcal{G}\\
f_{231}  &  :\equiv\lambda_{\left(  d_{1},d_{2},d_{3}\right)  :D^{3}}\left(
d_{1},d_{2},d_{3},0,d_{1}d_{3},d_{1}d_{2},0,0\right)  :\mathcal{H}%
_{231}\rightarrow\mathcal{G}\\
f_{312}  &  :\equiv\lambda_{\left(  d_{1},d_{2},d_{3}\right)  :D^{3}}\left(
d_{1},d_{2},d_{3},d_{2}d_{3},d_{1}d_{3},0,d_{1}d_{2}d_{3},0\right)
:\mathcal{H}_{312}\rightarrow\mathcal{G}\\
f_{321}  &  :\equiv\lambda_{\left(  d_{1},d_{2},d_{3}\right)  :D^{3}}\left(
d_{1},d_{2},d_{3},d_{2}d_{3},d_{1}d_{3},d_{1}d_{2},0,d_{1}d_{2}d_{3}\right)
:\mathcal{H}_{321}\rightarrow\mathcal{G}\\
h_{123}^{1}  &  :\equiv\lambda_{\left(  d_{1},d_{2},d_{3}\right)
:D^{3}\left\{  \left(  2,3\right)  \right\}  }\left(  d_{1},d_{2}%
,d_{3}\right)  :\mathcal{K}_{123,132}^{1}\rightarrow\mathcal{H}_{123}\\
h_{132}^{1}  &  :\equiv\lambda_{\left(  d_{1},d_{2},d_{3}\right)
:D^{3}\left\{  \left(  2,3\right)  \right\}  }\left(  d_{1},d_{2}%
,d_{3}\right)  :\mathcal{K}_{123,132}^{1}\rightarrow\mathcal{H}_{132}\\
h_{231}^{1}  &  :\equiv\lambda_{\left(  d_{1},d_{2},d_{3}\right)
:D^{3}\left\{  \left(  2,3\right)  \right\}  }\left(  d_{1},d_{2}%
,d_{3}\right)  :\mathcal{K}_{231,321}^{1}\rightarrow\mathcal{H}_{231}\\
h_{321}^{1}  &  :\equiv\lambda_{\left(  d_{1},d_{2},d_{3}\right)
:D^{3}\left\{  \left(  2,3\right)  \right\}  }\left(  d_{1},d_{2}%
,d_{3}\right)  :\mathcal{K}_{231,321}^{1}\rightarrow\mathcal{H}_{321}\\
h_{231}^{2}  &  :\equiv\lambda_{\left(  d_{1},d_{2},d_{3}\right)
:D^{3}\left\{  \left(  1,3\right)  \right\}  }\left(  d_{1},d_{2}%
,d_{3}\right)  :\mathcal{K}_{231,213}^{2}\rightarrow\mathcal{H}_{231}\\
h_{213}^{2}  &  :\equiv\lambda_{\left(  d_{1},d_{2},d_{3}\right)
:D^{3}\left\{  \left(  1,3\right)  \right\}  }\left(  d_{1},d_{2}%
,d_{3}\right)  :\mathcal{K}_{231,213}^{2}\rightarrow\mathcal{H}_{213}\\
h_{312}^{2}  &  :\equiv\lambda_{\left(  d_{1},d_{2},d_{3}\right)
:D^{3}\left\{  \left(  1,3\right)  \right\}  }\left(  d_{1},d_{2}%
,d_{3}\right)  :\mathcal{K}_{312,132}^{2}\rightarrow\mathcal{H}_{312}\\
h_{132}^{2}  &  :\equiv\lambda_{\left(  d_{1},d_{2},d_{3}\right)
:D^{3}\left\{  \left(  1,3\right)  \right\}  }\left(  d_{1},d_{2}%
,d_{3}\right)  :\mathcal{K}_{312,132}^{2}\rightarrow\mathcal{H}_{132}\\
h_{312}^{3}  &  :\equiv\lambda_{\left(  d_{1},d_{2},d_{3}\right)
:D^{3}\left\{  \left(  1,2\right)  \right\}  }\left(  d_{1},d_{2}%
,d_{3}\right)  :\mathcal{K}_{312,321}^{3}\rightarrow\mathcal{H}_{312}\\
h_{321}^{3}  &  :\equiv\lambda_{\left(  d_{1},d_{2},d_{3}\right)
:D^{3}\left\{  \left(  1,2\right)  \right\}  }\left(  d_{1},d_{2}%
,d_{3}\right)  :\mathcal{K}_{312,321}^{3}\rightarrow\mathcal{H}_{321}\\
h_{123}^{3}  &  :\equiv\lambda_{\left(  d_{1},d_{2},d_{3}\right)
:D^{3}\left\{  \left(  1,2\right)  \right\}  }\left(  d_{1},d_{2}%
,d_{3}\right)  :\mathcal{K}_{123,213}^{3}\rightarrow\mathcal{H}_{123}\\
h_{213}^{3}  &  :\equiv\lambda_{\left(  d_{1},d_{2},d_{3}\right)
:D^{3}\left\{  \left(  1,2\right)  \right\}  }\left(  d_{1},d_{2}%
,d_{3}\right)  :\mathcal{K}_{123,213}^{3}\rightarrow\mathcal{H}_{213}%
\end{align*}
is a quasi-colimit diagram.
\end{lemma}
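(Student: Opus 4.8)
The plan is to proceed exactly as in the proofs of Lemma \ref{l5.2} and Lemma \ref{l5.3}: by the Homotopical Generalized Kock--Lawvere Axiom (Axiom \ref{a1.1}), together with the Remark following it, the claim that the diagram is a quasi-colimit diagram is equivalent to the claim that the diagram obtained by applying the contravariant functor $\rightarrow\mathbb{R}$ is a limit diagram. Concretely, I must show that for every family of maps $\gamma_{\sigma}\colon\mathcal{H}_{\sigma}=D^{3}\rightarrow\mathbb{R}$, one for each of the six permutations $\sigma$ of $\{1,2,3\}$, that agree after restriction along the six $h$-maps into the six $\mathcal{K}$'s, there is a unique $\gamma\colon\mathcal{G}\rightarrow\mathbb{R}$ with $\gamma\circ f_{\sigma}=\gamma_{\sigma}$ for all $\sigma$.

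First I would use Axiom \ref{a1.1} to expand each $\gamma_{\sigma}$ in Taylor form $\gamma_{\sigma}(d_{1},d_{2},d_{3})=a^{\sigma}+a_{1}^{\sigma}d_{1}+a_{2}^{\sigma}d_{2}+a_{3}^{\sigma}d_{3}+a_{12}^{\sigma}d_{1}d_{2}+a_{13}^{\sigma}d_{1}d_{3}+a_{23}^{\sigma}d_{2}d_{3}+a_{123}^{\sigma}d_{1}d_{2}d_{3}$, so that a map out of $D^{3}$ is nothing but its eight coefficients. Next I would decode the six compatibility conditions. Since $\mathcal{K}_{123,132}^{1}$ and $\mathcal{K}_{231,321}^{1}$ live on $D^{3}\{(2,3)\}$, where $d_{2}d_{3}=0$ and hence $d_{1}d_{2}d_{3}=0$, they force all coefficients except $a_{23}$ and $a_{123}$ to agree across the linked permutations; likewise the two $\mathcal{K}^{2}$'s on $D^{3}\{(1,3)\}$ leave only $a_{13},a_{123}$ free, and the two $\mathcal{K}^{3}$'s on $D^{3}\{(1,2)\}$ leave only $a_{12},a_{123}$ free. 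Tracing these equalities around the hexagonal cycle $123\text{--}132\text{--}312\text{--}321\text{--}231\text{--}213\text{--}123$ of linked permutations shows that the constant term $a^{\sigma}$ and all three linear terms $a_{i}^{\sigma}$ are common to all six $\sigma$, that $a_{12}^{\sigma}$ depends only on the relative order of $1$ and $2$ in $\sigma$ (two possible values), that $a_{13}^{\sigma}$ and $a_{23}^{\sigma}$ likewise depend only on the relative orders of $1,3$ and of $2,3$ respectively, and that the six cubic coefficients $a_{123}^{\sigma}$ are unconstrained. A compatible family is thus parametrized by $1+3+2+2+2+6=16$ elements of $\mathbb{R}$.

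The heart of the argument is to identify the Weil algebra of $\mathcal{G}$ and to realize these $16$ parameters as the coefficients of a single $\gamma$. Inspecting the defining list of $\mathcal{G}=D^{8}\{\ldots\}$, I would check that the only surviving monomials are the eight linear generators $d_{1},\ldots,d_{8}$, the six products $d_{1}d_{2},d_{1}d_{3},d_{1}d_{4},d_{2}d_{3},d_{2}d_{5},d_{3}d_{6}$, and the single triple product $d_{1}d_{2}d_{3}$, again $16$ in number. Writing a general $\gamma$ with coefficients $b,b_{1},\ldots,b_{8},b_{12},b_{13},b_{14},b_{23},b_{25},b_{36},b_{123}$ and computing the pullbacks $\gamma\circ f_{\sigma}$, each substitution $d_{4}\mapsto d_{2}d_{3}$, $d_{5}\mapsto d_{1}d_{3}$, $d_{6}\mapsto d_{1}d_{2}$, $d_{7},d_{8}\mapsto d_{1}d_{2}d_{3}$ turns the mixed products $d_{1}d_{4},d_{2}d_{5},d_{3}d_{6}$ into $d_{1}d_{2}d_{3}$, so that the pullback coefficients stand in a triangular relationship with the $b$'s: the constant and linear $b$'s recover the common coefficients, $b_{4},b_{5},b_{6}$ record the jumps in $a_{23},a_{13},a_{12}$, and $b_{123},b_{14},b_{25},b_{36},b_{7},b_{8}$ are solved successively from the six cubic equations. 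This simultaneously yields existence and, because the solve is triangular, uniqueness of $\gamma$.

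The main obstacle is the bookkeeping in this last step rather than any conceptual difficulty. One must verify the surviving-monomial computation for $\mathcal{G}$ with care, the relations having been rigged precisely so that exactly the six quadratics and the one cubic above remain, and then match the cubic coefficient of each $\gamma\circ f_{\sigma}$ correctly, since it receives contributions both from the generators $d_{7},d_{8}$ and from the mixed products $d_{1}d_{4},d_{2}d_{5},d_{3}d_{6}$. The fact that the two counts agree and that the resulting linear system is invertible is what certifies that $\mathcal{G}$ has been chosen correctly, and it is the computation one should carry out explicitly to finish.
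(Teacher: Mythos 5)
Your proposal is correct and follows essentially the same route as the paper's own proof: Taylor-expanding the six $\gamma_{\sigma}$ via Axiom \ref{a1.1}, decoding the six compatibility conditions into exactly the coefficient identifications $a_{12}^{123}=a_{12}^{132}=a_{12}^{312}$, $a_{13}^{123}=a_{13}^{132}=a_{13}^{213}$, $a_{23}^{123}=a_{23}^{213}=a_{23}^{231}$ (and their complements), and then realizing a compatible family as the pullbacks of a single function on $\mathcal{G}$ spanned by the monomials $1,d_{1},\dots,d_{8},d_{1}d_{2},d_{1}d_{3},d_{2}d_{3},d_{1}d_{4},d_{2}d_{5},d_{3}d_{6},d_{1}d_{2}d_{3}$. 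Your explicit triangular solve for $b_{123},b_{14},b_{25},b_{36},b_{7},b_{8}$ is in fact slightly more complete than the paper, which writes down the existence of the $b$'s but leaves the uniqueness half of the limit property implicit.
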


\begin{proof}
By Axiom \ref{a1.1} we are sure that, given
\[
\gamma^{123},\gamma^{132},\gamma^{213},\gamma^{231},\gamma^{312},\gamma
^{321}:D^{3}\rightarrow\mathbb{R}%
\]
there exist
\begin{align*}
a^{123},a_{1}^{123},a_{2}^{123},a_{3}^{123},a_{12}^{123},a_{13}^{123}%
,a_{23}^{123},a_{123}^{123}  &  :\mathbb{R}\\
a^{132},a_{1}^{132},a_{2}^{132},a_{3}^{132},a_{12}^{132},a_{13}^{132}%
,a_{23}^{132},a_{123}^{132}  &  :\mathbb{R}\\
a^{213},a_{1}^{213},a_{2}^{213},a_{3}^{213},a_{12}^{213},a_{13}^{213}%
,a_{23}^{213},a_{123}^{213}  &  :\mathbb{R}\\
a^{231},a_{1}^{231},a_{2}^{231},a_{3}^{231},a_{12}^{231},a_{13}^{231}%
,a_{23}^{231},a_{123}^{231}  &  :\mathbb{R}\\
a^{312},a_{1}^{312},a_{2}^{312},a_{3}^{312},a_{12}^{312},a_{13}^{312}%
,a_{23}^{312},a_{123}^{312}  &  :\mathbb{R}\\
a^{321},a_{1}^{321},a_{2}^{321},a_{3}^{321},a_{12}^{321},a_{13}^{321}%
,a_{23}^{321},a_{123}^{321}  &  :\mathbb{R}%
\end{align*}
such that
\begin{align*}
&  \lambda_{\left(  d_{1},d_{2},d_{3}\right)  :D^{3}}\gamma^{123}\left(
d_{1},d_{2},d_{3}\right) \\
&  =\lambda_{\left(  d_{1},d_{2},d_{3}\right)  :D^{3}}\left(
\begin{array}
[c]{c}%
a^{123}+a_{1}^{123}d_{1}+a_{2}^{123}d_{2}+a_{3}^{123}d_{3}+\\
a_{12}^{123}d_{1}d_{2}+a_{13}^{123}d_{1}d_{3}+a_{23}^{123}d_{2}d_{3}%
+a_{123}^{123}d_{1}d_{2}d_{3}%
\end{array}
\right) \\
&  \lambda_{\left(  d_{1},d_{2},d_{3}\right)  :D^{3}}\gamma^{132}\left(
d_{1},d_{2},d_{3}\right) \\
&  =\lambda_{\left(  d_{1},d_{2},d_{3}\right)  :D^{3}}\left(
\begin{array}
[c]{c}%
a^{132}+a_{1}^{132}d_{1}+a_{2}^{132}d_{2}+a_{3}^{132}d_{3}+\\
a_{12}^{132}d_{1}d_{2}+a_{13}^{132}d_{1}d_{3}+a_{23}^{132}d_{2}d_{3}%
+a_{123}^{132}d_{1}d_{2}d_{3}%
\end{array}
\right) \\
&  \lambda_{\left(  d_{1},d_{2},d_{3}\right)  :D^{3}}\gamma^{213}\left(
d_{1},d_{2},d_{3}\right) \\
&  =\lambda_{\left(  d_{1},d_{2},d_{3}\right)  :D^{3}}\left(
\begin{array}
[c]{c}%
a^{213}+a_{1}^{213}d_{1}+a_{2}^{213}d_{2}+a_{3}^{213}d_{3}+\\
a_{12}^{213}d_{1}d_{2}+a_{13}^{213}d_{1}d_{3}+a_{23}^{213}d_{2}d_{3}%
+a_{123}^{213}d_{1}d_{2}d_{3}%
\end{array}
\right) \\
&  \lambda_{\left(  d_{1},d_{2},d_{3}\right)  :D^{3}}\gamma^{231}\left(
d_{1},d_{2},d_{3}\right) \\
&  =\lambda_{\left(  d_{1},d_{2},d_{3}\right)  :D^{3}}\left(
\begin{array}
[c]{c}%
a^{231}+a_{1}^{231}d_{1}+a_{2}^{231}d_{2}+a_{3}^{231}d_{3}+\\
a_{12}^{231}d_{1}d_{2}+a_{13}^{231}d_{1}d_{3}+a_{23}^{231}d_{2}d_{3}%
+a_{123}^{231}d_{1}d_{2}d_{3}%
\end{array}
\right) \\
&  \lambda_{\left(  d_{1},d_{2},d_{3}\right)  :D^{3}}\gamma^{312}\left(
d_{1},d_{2},d_{3}\right) \\
&  =\lambda_{\left(  d_{1},d_{2},d_{3}\right)  :D^{3}}\left(
\begin{array}
[c]{c}%
a^{312}+a_{1}^{312}d_{1}+a_{2}^{312}d_{2}+a_{3}^{312}d_{3}+\\
a_{12}^{312}d_{1}d_{2}+a_{13}^{312}d_{1}d_{3}+a_{23}^{312}d_{2}d_{3}%
+a_{123}^{312}d_{1}d_{2}d_{3}%
\end{array}
\right) \\
&  \lambda_{\left(  d_{1},d_{2},d_{3}\right)  :D^{3}}\gamma^{321}\left(
d_{1},d_{2},d_{3}\right) \\
&  =\lambda_{\left(  d_{1},d_{2},d_{3}\right)  :D^{3}}\left(
\begin{array}
[c]{c}%
a^{321}+a_{1}^{321}d_{1}+a_{2}^{321}d_{2}+a_{3}^{321}d_{3}+\\
a_{12}^{321}d_{1}d_{2}+a_{13}^{321}d_{1}d_{3}+a_{23}^{321}d_{2}d_{3}%
+a_{123}^{321}d_{1}d_{2}d_{3}%
\end{array}
\right)
\end{align*}
If it holds that
\begin{align*}
\gamma^{123}\circ h_{123}^{1}  &  =\gamma^{132}\circ h_{132}^{1}\\
\gamma^{231}\circ h_{231}^{1}  &  =\gamma^{321}\circ h_{321}^{1}\\
\gamma^{231}\circ h_{231}^{2}  &  =\gamma^{213}\circ h_{213}^{2}\\
\gamma^{312}\circ h_{312}^{2}  &  =\gamma^{132}\circ h_{132}^{2}\\
\gamma^{312}\circ h_{312}^{3}  &  =\gamma^{321}\circ h_{321}^{3}\\
\gamma^{123}\circ h_{123}^{3}  &  =\gamma^{213}\circ h_{213}^{3}%
\end{align*}
then we have
\begin{align*}
a^{123}  &  =a^{132},a_{1}^{123}=a_{1}^{132},a_{2}^{123}=a_{2}^{132}%
,a_{3}^{123}=a_{3}^{132},a_{12}^{123}=a_{12}^{132},a_{13}^{123}=a_{13}^{132}\\
a^{231}  &  =a^{321},a_{1}^{231}=a_{1}^{321},a_{2}^{231}=a_{2}^{321}%
,a_{3}^{231}=a_{3}^{321},a_{12}^{231}=a_{12}^{321},a_{13}^{231}=a_{13}^{321}\\
a^{231}  &  =a^{213},a_{1}^{231}=a_{1}^{213},a_{2}^{231}=a_{2}^{213}%
,a_{3}^{231}=a_{3}^{213},a_{12}^{231}=a_{12}^{213},a_{23}^{231}=a_{23}^{213}\\
a^{312}  &  =a^{132},a_{1}^{312}=a_{1}^{132},a_{2}^{312}=a_{2}^{132}%
,a_{3}^{312}=a_{3}^{132},a_{12}^{312}=a_{12}^{132},a_{23}^{312}=a_{23}^{132}\\
a^{312}  &  =a^{321},a_{1}^{312}=a_{1}^{321},a_{2}^{312}=a_{2}^{321}%
,a_{3}^{312}=a_{3}^{321},a_{13}^{312}=a_{13}^{321},a_{23}^{312}=a_{23}^{321}\\
a^{123}  &  =a^{213},a_{1}^{123}=a_{1}^{213},a_{2}^{123}=a_{2}^{213}%
,a_{3}^{123}=a_{3}^{213},a_{13}^{123}=a_{13}^{213},a_{23}^{123}=a_{23}^{213}%
\end{align*}%
\begin{align*}
a^{123}  &  =a^{231},a_{1}^{123}=a_{1}^{231},a_{2}^{123}=a_{2}^{231}%
,a_{3}^{123}=a_{3}^{231},a_{23}^{123}=a_{23}^{231}\\
a^{132}  &  =a^{321},a_{1}^{132}=a_{1}^{321},a_{2}^{132}=a_{2}^{321}%
,a_{3}^{132}=a_{3}^{321},a_{23}^{132}=a_{23}^{321}\\
a^{231}  &  =a^{312},a_{1}^{231}=a_{1}^{312},a_{2}^{231}=a_{2}^{312}%
,a_{3}^{231}=a_{3}^{312},a_{13}^{231}=a_{13}^{312}\\
a^{213}  &  =a^{132},a_{1}^{213}=a_{1}^{132},a_{2}^{213}=a_{2}^{132}%
,a_{3}^{213}=a_{3}^{132},a_{13}^{213}=a_{13}^{132}\\
a^{312}  &  =a^{123},a_{1}^{312}=a_{1}^{123},a_{2}^{312}=a_{2}^{123}%
,a_{3}^{312}=a_{3}^{123},a_{12}^{312}=a_{12}^{123}\\
a^{321}  &  =a^{213},a_{1}^{321}=a_{1}^{213},a_{2}^{321}=a_{2}^{213}%
,a_{3}^{321}=a_{3}^{213},a_{12}^{321}=a_{12}^{213}%
\end{align*}
which is tantamout to
\begin{align*}
a^{123}  &  =a^{132}=a^{213}=a^{231}=a^{312}=a^{321}\\
a_{1}^{123}  &  =a_{1}^{132}=a_{1}^{213}=a_{1}^{231}=a_{1}^{312}=a_{1}^{321}\\
a_{2}^{123}  &  =a_{2}^{132}=a_{2}^{213}=a_{2}^{231}=a_{2}^{312}=a_{2}^{321}\\
a_{3}^{123}  &  =a_{3}^{132}=a_{3}^{213}=a_{3}^{231}=a_{3}^{312}=a_{3}^{321}\\
a_{12}^{123}  &  =a_{12}^{132}=a_{12}^{312}\text{ and }a_{12}^{213}%
=a_{12}^{231}=a_{12}^{321}\\
a_{13}^{123}  &  =a_{13}^{132}=a_{13}^{213}\text{ and }a_{13}^{231}%
=a_{13}^{312}=a_{13}^{321}\\
a_{23}^{123}  &  =a_{23}^{213}=a_{23}^{231}\text{ and }a_{23}^{132}%
=a_{23}^{312}=a_{23}^{321}%
\end{align*}
This means that there exist
\[
b,b_{1},b_{2},b_{3},b_{4},b_{5},b_{6},b_{7},b_{8},b_{12},b_{13},b_{23}%
,b_{123},b_{14},b_{25},b_{36}:\mathbb{R}%
\]
such that
\begin{align*}
&  \left(  \lambda_{\left(  d_{1},d_{2},d_{3},d_{4},d_{5},d_{6},d_{7}%
,d_{8}\right)  :G}\left(
\begin{array}
[c]{c}%
b+b_{1}d_{1}+b_{2}d_{2}+b_{3}d_{3}+b_{4}d_{4}+b_{5}d_{5}+\\
b_{6}d_{6}+b_{7}d_{7}+b_{8}d_{8}+\\
b_{12}d_{1}d_{2}+b_{13}d_{1}d_{3}+b_{23}d_{2}d_{3}+b_{123}d_{1}d_{2}d_{3}+\\
b_{14}d_{1}d_{4}+b_{25}d_{2}d_{5}+b_{36}d_{3}d_{6}%
\end{array}
\right)  \right)  \circ f_{123}\\
&  =\gamma^{123}\\
&  \left(  \lambda_{\left(  d_{1},d_{2},d_{3},d_{4},d_{5},d_{6},d_{7}%
,d_{8}\right)  :G}\left(
\begin{array}
[c]{c}%
b+b_{1}d_{1}+b_{2}d_{2}+b_{3}d_{3}+b_{4}d_{4}+b_{5}d_{5}+\\
b_{6}d_{6}+b_{7}d_{7}+b_{8}d_{8}+\\
b_{12}d_{1}d_{2}+b_{13}d_{1}d_{3}+b_{23}d_{2}d_{3}+b_{123}d_{1}d_{2}d_{3}+\\
b_{14}d_{1}d_{4}+b_{25}d_{2}d_{5}+b_{36}d_{3}d_{6}%
\end{array}
\right)  \right)  \circ f_{132}\\
&  =\gamma^{132}\\
&  \left(  \lambda_{\left(  d_{1},d_{2},d_{3},d_{4},d_{5},d_{6},d_{7}%
,d_{8}\right)  :G}\left(
\begin{array}
[c]{c}%
b+b_{1}d_{1}+b_{2}d_{2}+b_{3}d_{3}+b_{4}d_{4}+b_{5}d_{5}+\\
b_{6}d_{6}+b_{7}d_{7}+b_{8}d_{8}+\\
b_{12}d_{1}d_{2}+b_{13}d_{1}d_{3}+b_{23}d_{2}d_{3}+b_{123}d_{1}d_{2}d_{3}+\\
b_{14}d_{1}d_{4}+b_{25}d_{2}d_{5}+b_{36}d_{3}d_{6}%
\end{array}
\right)  \right)  \circ f_{213}\\
&  =\gamma^{213}\\
&  \left(  \lambda_{\left(  d_{1},d_{2},d_{3},d_{4},d_{5},d_{6},d_{7}%
,d_{8}\right)  :G}\left(
\begin{array}
[c]{c}%
b+b_{1}d_{1}+b_{2}d_{2}+b_{3}d_{3}+b_{4}d_{4}+b_{5}d_{5}+\\
b_{6}d_{6}+b_{7}d_{7}+b_{8}d_{8}+\\
b_{12}d_{1}d_{2}+b_{13}d_{1}d_{3}+b_{23}d_{2}d_{3}+b_{123}d_{1}d_{2}d_{3}+\\
b_{14}d_{1}d_{4}+b_{25}d_{2}d_{5}+b_{36}d_{3}d_{6}%
\end{array}
\right)  \right)  \circ f_{231}\\
&  =\gamma^{231}\\
&  \left(  \lambda_{\left(  d_{1},d_{2},d_{3},d_{4},d_{5},d_{6},d_{7}%
,d_{8}\right)  :G}\left(
\begin{array}
[c]{c}%
b+b_{1}d_{1}+b_{2}d_{2}+b_{3}d_{3}+b_{4}d_{4}+b_{5}d_{5}+\\
b_{6}d_{6}+b_{7}d_{7}+b_{8}d_{8}+\\
b_{12}d_{1}d_{2}+b_{13}d_{1}d_{3}+b_{23}d_{2}d_{3}+b_{123}d_{1}d_{2}d_{3}+\\
b_{14}d_{1}d_{4}+b_{25}d_{2}d_{5}+b_{36}d_{3}d_{6}%
\end{array}
\right)  \right)  \circ f_{312}\\
&  =\gamma^{312}\\
&  \left(  \lambda_{\left(  d_{1},d_{2},d_{3},d_{4},d_{5},d_{6},d_{7}%
,d_{8}\right)  :G}\left(
\begin{array}
[c]{c}%
b+b_{1}d_{1}+b_{2}d_{2}+b_{3}d_{3}+b_{4}d_{4}+b_{5}d_{5}+\\
b_{6}d_{6}+b_{7}d_{7}+b_{8}d_{8}+\\
b_{12}d_{1}d_{2}+b_{13}d_{1}d_{3}+b_{23}d_{2}d_{3}+b_{123}d_{1}d_{2}d_{3}+\\
b_{14}d_{1}d_{4}+b_{25}d_{2}d_{5}+b_{36}d_{3}d_{6}%
\end{array}
\right)  \right)  \circ f_{321}\\
&  =\gamma^{321}%
\end{align*}
This completes the proof.
\end{proof}

\begin{corollary}
\label{cl5.5}Let $M$\ be a microlinear set. Given
\[
\theta_{123},\theta_{132},\theta_{213},\theta_{231},\theta_{312},\theta
_{321}:D^{3}\rightarrow M
\]
with
\begin{align*}
\theta_{123}\circ\left(  \lambda_{\left(  d_{1},d_{2},d_{3}\right)
:D^{3}\left\{  \left(  2,3\right)  \right\}  }\left(  d_{1},d_{2}%
,d_{3}\right)  \right)   &  =\theta_{132}\circ\left(  \lambda_{\left(
d_{1},d_{2},d_{3}\right)  :D^{3}\left\{  \left(  2,3\right)  \right\}
}\left(  d_{1},d_{2},d_{3}\right)  \right) \\
\theta_{231}\circ\left(  \lambda_{\left(  d_{1},d_{2},d_{3}\right)
:D^{3}\left\{  \left(  2,3\right)  \right\}  }\left(  d_{1},d_{2}%
,d_{3}\right)  \right)   &  =\theta_{321}\circ\left(  \lambda_{\left(
d_{1},d_{2},d_{3}\right)  :D^{3}\left\{  \left(  2,3\right)  \right\}
}\left(  d_{1},d_{2},d_{3}\right)  \right) \\
\theta_{231}\circ\left(  \lambda_{\left(  d_{1},d_{2},d_{3}\right)
:D^{3}\left\{  \left(  1,3\right)  \right\}  }\left(  d_{1},d_{2}%
,d_{3}\right)  \right)   &  =\theta_{213}\circ\left(  \lambda_{\left(
d_{1},d_{2},d_{3}\right)  :D^{3}\left\{  \left(  1,3\right)  \right\}
}\left(  d_{1},d_{2},d_{3}\right)  \right) \\
\theta_{312}\circ\left(  \lambda_{\left(  d_{1},d_{2},d_{3}\right)
:D^{3}\left\{  \left(  1,3\right)  \right\}  }\left(  d_{1},d_{2}%
,d_{3}\right)  \right)   &  =\theta_{132}\circ\left(  \lambda_{\left(
d_{1},d_{2},d_{3}\right)  :D^{3}\left\{  \left(  1,3\right)  \right\}
}\left(  d_{1},d_{2},d_{3}\right)  \right) \\
\theta_{312}\circ\left(  \lambda_{\left(  d_{1},d_{2},d_{3}\right)
:D^{3}\left\{  \left(  1,2\right)  \right\}  }\left(  d_{1},d_{2}%
,d_{3}\right)  \right)   &  =\theta_{321}\circ\left(  \lambda_{\left(
d_{1},d_{2},d_{3}\right)  :D^{3}\left\{  \left(  1,2\right)  \right\}
}\left(  d_{1},d_{2},d_{3}\right)  \right) \\
\theta_{123}\circ\left(  \lambda_{\left(  d_{1},d_{2},d_{3}\right)
:D^{3}\left\{  \left(  1,2\right)  \right\}  }\left(  d_{1},d_{2}%
,d_{3}\right)  \right)   &  =\theta_{213}\circ\left(  \lambda_{\left(
d_{1},d_{2},d_{3}\right)  :D^{3}\left\{  \left(  1,2\right)  \right\}
}\left(  d_{1},d_{2},d_{3}\right)  \right)
\end{align*}
there exists
\[
\mathfrak{k}_{\left(  \theta_{123},\theta_{132},\theta_{213},\theta
_{231},\theta_{312},\theta_{321}\right)  }:\mathcal{G}\rightarrow M
\]
such that
\begin{align*}
\mathfrak{k}_{\left(  \theta_{123},\theta_{132},\theta_{213},\theta
_{231},\theta_{312},\theta_{321}\right)  }\circ f_{123}  &  =\theta_{123}\\
\mathfrak{k}_{\left(  \theta_{123},\theta_{132},\theta_{213},\theta
_{231},\theta_{312},\theta_{321}\right)  }\circ f_{132}  &  =\theta_{132}\\
\mathfrak{k}_{\left(  \theta_{123},\theta_{132},\theta_{213},\theta
_{231},\theta_{312},\theta_{321}\right)  }\circ f_{213}  &  =\theta_{213}\\
\mathfrak{k}_{\left(  \theta_{123},\theta_{132},\theta_{213},\theta
_{231},\theta_{312},\theta_{321}\right)  }\circ f_{231}  &  =\theta_{231}\\
\mathfrak{k}_{\left(  \theta_{123},\theta_{132},\theta_{213},\theta
_{231},\theta_{312},\theta_{321}\right)  }\circ f_{312}  &  =\theta_{312}\\
\mathfrak{k}_{\left(  \theta_{123},\theta_{132},\theta_{213},\theta
_{231},\theta_{312},\theta_{321}\right)  }\circ f_{312}  &  =\theta_{312}%
\end{align*}

\end{corollary}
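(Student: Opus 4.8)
The plan is to proceed exactly as in the corollaries attached to the earlier quasi-colimit lemmas (for instance Corollaries \ref{cl5.1}, \ref{cl5.2} and \ref{cl5.3}), reading off the desired gluing map as the value furnished by a limit cone. First I would invoke Lemma \ref{l5.5}, which asserts that the displayed diagram built from the apex $\mathcal{G}$, the six copies $\mathcal{H}_{123},\dots,\mathcal{H}_{321}$ of $D^{3}$, and the six edge-objects $\mathcal{K}_{123,132}^{1},\dots,\mathcal{K}_{123,213}^{3}$ (with the connecting maps $f_{\sigma}$ and $h_{\sigma}^{k}$) is a quasi-colimit diagram of small objects. Since $M$ is a microlinear set, its set truncation $\left\Vert M\right\Vert _{0}$ is equivalent to $M$ itself, so by the very definition of microlinearity the exponentiation of this quasi-colimit diagram over $M$ is a limit diagram. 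Concretely, this exhibits $\mathcal{G}\rightarrow M$ as the limit of the diagram whose nodes are the function types $\mathcal{H}_{\sigma}\rightarrow M$ and $\mathcal{K}_{\sigma,\tau}^{k}\rightarrow M$, the connecting arrows being precomposition with the various $h_{\sigma}^{k}$.

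Next I would recognize the six hypothesised identities as precisely the matching conditions that single out an element of this limit. Each edge-object $\mathcal{K}_{\sigma,\tau}^{k}$ sits between $\mathcal{H}_{\sigma}$ and $\mathcal{H}_{\tau}$ through the canonical inclusions $h_{\sigma}^{k}$ and $h_{\tau}^{k}$; the corresponding hypothesis, say $\theta_{123}\circ h_{123}^{1}=\theta_{132}\circ h_{132}^{1}$ written out with the explicit inclusions of $D^{3}\left\{ \left( 2,3\right) \right\}$, asserts exactly that $\theta_{123}$ and $\theta_{132}$ agree after restriction to $\mathcal{K}_{123,132}^{1}$. Collecting all six such agreements, the tuple $\left( \theta_{123},\theta_{132},\theta_{213},\theta_{231},\theta_{312},\theta_{321}\right)$ constitutes a cone over the exponentiated diagram.

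I would then apply the universal property of the limit: the cone just described factors uniquely through the limit apex $\mathcal{G}\rightarrow M$, yielding a homotopically unique $\mathfrak{k}_{\left( \theta_{123},\dots,\theta_{321}\right) }:\mathcal{G}\rightarrow M$ whose composite with each $f_{\sigma}$ recovers the corresponding $\theta_{\sigma}$, which is the asserted conclusion.

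The step I expect to demand the most care is the bookkeeping of the middle paragraph: verifying that the six listed compatibility conditions are neither too few nor too many to determine a limit element, i.e. that they correspond bijectively to the six edges $\mathcal{K}_{\sigma,\tau}^{k}$ and that, under the explicit formulas for the inclusions $h_{\sigma}^{k}$, each condition is genuinely the equality of the two relevant restrictions. Once this dictionary between hypotheses and edges is fixed, the existence and homotopical uniqueness of $\mathfrak{k}$ follow formally from microlinearity together with Lemma \ref{l5.5}, with no further infinitesimal computation required.
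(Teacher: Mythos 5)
Your proposal is correct and is exactly the argument the paper intends: the corollary is stated there without a written proof precisely because it follows from Lemma \ref{l5.5} by the same standard pattern as Corollaries \ref{cl5.1}--\ref{cl5.3}, namely that microlinearity turns the quasi-colimit diagram into a limit diagram after exponentiation over $M$ (a set, so $\left\Vert M\right\Vert _{0}\backsimeq M$), the six hypotheses are precisely the cone conditions over the six edge objects $\mathcal{K}_{\sigma,\tau}^{k}$, and the universal property of the limit yields $\mathfrak{k}_{\left( \theta_{123},\ldots,\theta_{321}\right) }$. Your dictionary matching each hypothesis to its edge object is accurate, so nothing further is required.
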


\begin{theorem}
\label{t5.1}(\underline{The general Jacobi identity}) Let $M$\ be a
microlinear set and $\theta_{123},\theta_{132},\theta_{213},\theta
_{231},\theta_{312},\theta_{321}:D^{3}\rightarrow M$. If the identities
\begin{align*}
\theta_{123}\circ\left(  \lambda_{\left(  d_{1},d_{2},d_{3}\right)
:D^{3}\left\{  \left(  2,3\right)  \right\}  }\left(  d_{1},d_{2}%
,d_{3}\right)  \right)   &  =\theta_{132}\circ\left(  \lambda_{\left(
d_{1},d_{2},d_{3}\right)  :D^{3}\left\{  \left(  2,3\right)  \right\}
}\left(  d_{1},d_{2},d_{3}\right)  \right) \\
\theta_{231}\circ\left(  \lambda_{\left(  d_{1},d_{2},d_{3}\right)
:D^{3}\left\{  \left(  2,3\right)  \right\}  }\left(  d_{1},d_{2}%
,d_{3}\right)  \right)   &  =\theta_{321}\circ\left(  \lambda_{\left(
d_{1},d_{2},d_{3}\right)  :D^{3}\left\{  \left(  2,3\right)  \right\}
}\left(  d_{1},d_{2},d_{3}\right)  \right) \\
\theta_{231}\circ\left(  \lambda_{\left(  d_{1},d_{2},d_{3}\right)
:D^{3}\left\{  \left(  1,3\right)  \right\}  }\left(  d_{1},d_{2}%
,d_{3}\right)  \right)   &  =\theta_{213}\circ\left(  \lambda_{\left(
d_{1},d_{2},d_{3}\right)  :D^{3}\left\{  \left(  1,3\right)  \right\}
}\left(  d_{1},d_{2},d_{3}\right)  \right) \\
\theta_{312}\circ\left(  \lambda_{\left(  d_{1},d_{2},d_{3}\right)
:D^{3}\left\{  \left(  1,3\right)  \right\}  }\left(  d_{1},d_{2}%
,d_{3}\right)  \right)   &  =\theta_{132}\circ\left(  \lambda_{\left(
d_{1},d_{2},d_{3}\right)  :D^{3}\left\{  \left(  1,3\right)  \right\}
}\left(  d_{1},d_{2},d_{3}\right)  \right) \\
\theta_{312}\circ\left(  \lambda_{\left(  d_{1},d_{2},d_{3}\right)
:D^{3}\left\{  \left(  1,2\right)  \right\}  }\left(  d_{1},d_{2}%
,d_{3}\right)  \right)   &  =\theta_{321}\circ\left(  \lambda_{\left(
d_{1},d_{2},d_{3}\right)  :D^{3}\left\{  \left(  1,2\right)  \right\}
}\left(  d_{1},d_{2},d_{3}\right)  \right) \\
\theta_{123}\circ\left(  \lambda_{\left(  d_{1},d_{2},d_{3}\right)
:D^{3}\left\{  \left(  1,2\right)  \right\}  }\left(  d_{1},d_{2}%
,d_{3}\right)  \right)   &  =\theta_{213}\circ\left(  \lambda_{\left(
d_{1},d_{2},d_{3}\right)  :D^{3}\left\{  \left(  1,2\right)  \right\}
}\left(  d_{1},d_{2},d_{3}\right)  \right)
\end{align*}
obtain so that the six relative strong differences
\begin{align*}
&  \theta_{123}\underset{1}{\overset{\cdot}{-}}\theta_{132}\\
&  \theta_{231}\underset{1}{\overset{\cdot}{-}}\theta_{321}\\
&  \theta_{231}\underset{2}{\overset{\cdot}{-}}\theta_{213}\\
&  \theta_{312}\underset{2}{\overset{\cdot}{-}}\theta_{132}\\
&  \theta_{312}\underset{3}{\overset{\cdot}{-}}\theta_{321}\\
&  \theta_{123}\underset{3}{\overset{\cdot}{-}}\theta_{213}%
\end{align*}
are to be defined, then the identities
\begin{align}
\theta_{123}\circ\left(  \lambda_{\left(  d_{1},d_{2},d_{3}\right)
:D^{3}\left\{  \left(  1,2\right)  ,\left(  1,3\right)  \right\}  }\left(
d_{1},d_{2},d_{3}\right)  \right)   &  =\theta_{231}\circ\left(
\lambda_{\left(  d_{1},d_{2},d_{3}\right)  :D^{3}\left\{  \left(  1,2\right)
,\left(  1,3\right)  \right\}  }\left(  d_{1},d_{2},d_{3}\right)  \right)
\label{t5.1.1}\\
\theta_{132}\circ\left(  \lambda_{\left(  d_{1},d_{2},d_{3}\right)
:D^{3}\left\{  \left(  1,2\right)  ,\left(  1,3\right)  \right\}  }\left(
d_{1},d_{2},d_{3}\right)  \right)   &  =\theta_{321}\circ\left(
\lambda_{\left(  d_{1},d_{2},d_{3}\right)  :D^{3}\left\{  \left(  1,2\right)
,\left(  1,3\right)  \right\}  }\left(  d_{1},d_{2},d_{3}\right)  \right)
\label{t5.1.2}\\
\theta_{231}\circ\left(  \lambda_{\left(  d_{1},d_{2},d_{3}\right)
:D^{3}\left\{  \left(  1,2\right)  ,\left(  2,3\right)  \right\}  }\left(
d_{1},d_{2},d_{3}\right)  \right)   &  =\theta_{312}\circ\left(
\lambda_{\left(  d_{1},d_{2},d_{3}\right)  :D^{3}\left\{  \left(  1,2\right)
,\left(  2,3\right)  \right\}  }\left(  d_{1},d_{2},d_{3}\right)  \right)
\label{t5.1.3}\\
\theta_{213}\circ\left(  \lambda_{\left(  d_{1},d_{2},d_{3}\right)
:D^{3}\left\{  \left(  1,2\right)  ,\left(  2,3\right)  \right\}  }\left(
d_{1},d_{2},d_{3}\right)  \right)   &  =\theta_{132}\circ\left(
\lambda_{\left(  d_{1},d_{2},d_{3}\right)  :D^{3}\left\{  \left(  1,2\right)
,\left(  2,3\right)  \right\}  }\left(  d_{1},d_{2},d_{3}\right)  \right)
\label{t5.1.4}\\
\theta_{312}\circ\left(  \lambda_{\left(  d_{1},d_{2},d_{3}\right)
:D^{3}\left\{  \left(  1,3\right)  ,\left(  2,3\right)  \right\}  }\left(
d_{1},d_{2},d_{3}\right)  \right)   &  =\theta_{123}\circ\left(
\lambda_{\left(  d_{1},d_{2},d_{3}\right)  :D^{3}\left\{  \left(  1,3\right)
,\left(  2,3\right)  \right\}  }\left(  d_{1},d_{2},d_{3}\right)  \right)
\label{t5.1.5}\\
\theta_{321}\circ\left(  \lambda_{\left(  d_{1},d_{2},d_{3}\right)
:D^{3}\left\{  \left(  1,3\right)  ,\left(  2,3\right)  \right\}  }\left(
d_{1},d_{2},d_{3}\right)  \right)   &  =\theta_{213}\circ\left(
\lambda_{\left(  d_{1},d_{2},d_{3}\right)  :D^{3}\left\{  \left(  1,3\right)
,\left(  2,3\right)  \right\}  }\left(  d_{1},d_{2},d_{3}\right)  \right)
\label{t5.1.6}%
\end{align}
obtain so that the three strong differences
\begin{align*}
&  \left(  \theta_{123}\underset{1}{\overset{\cdot}{-}}\theta_{132}\right)
\overset{\cdot}{-}\left(  \theta_{231}\underset{1}{\overset{\cdot}{-}}%
\theta_{321}\right) \\
&  \left(  \theta_{231}\underset{2}{\overset{\cdot}{-}}\theta_{213}\right)
\overset{\cdot}{-}\left(  \theta_{312}\underset{2}{\overset{\cdot}{-}}%
\theta_{132}\right) \\
&  \left(  \theta_{312}\underset{3}{\overset{\cdot}{-}}\theta_{321}\right)
\overset{\cdot}{-}\left(  \theta_{123}\underset{3}{\overset{\cdot}{-}}%
\theta_{213}\right)
\end{align*}
are to be defined (by dint of Proposition \ref{p5.5}) and to sum up only to
vanish, namely,
\begin{align}
&  \left(  \left(  \theta_{123}\underset{1}{\overset{\cdot}{-}}\theta
_{132}\right)  \overset{\cdot}{-}\left(  \theta_{231}\underset{1}%
{\overset{\cdot}{-}}\theta_{321}\right)  \right)  +\left(  \left(
\theta_{231}\underset{2}{\overset{\cdot}{-}}\theta_{213}\right)
\overset{\cdot}{-}\left(  \theta_{312}\underset{2}{\overset{\cdot}{-}}%
\theta_{132}\right)  \right)  +\nonumber\\
&  \left(  \left(  \theta_{312}\underset{3}{\overset{\cdot}{-}}\theta
_{321}\right)  \overset{\cdot}{-}\left(  \theta_{123}\underset{3}%
{\overset{\cdot}{-}}\theta_{213}\right)  \right) \nonumber\\
&  =0 \label{t5.1.7}%
\end{align}

\end{theorem}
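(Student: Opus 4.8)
The plan is to route the entire argument through the single master map supplied by Corollary \ref{cl5.5} and to read each of the three second-order strong differences off from it.

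First I would note that the six hypotheses of the theorem are word for word the six compatibility conditions required by Corollary \ref{cl5.5}. Applying that corollary therefore produces a map
\[
\mathfrak{k}:\equiv\mathfrak{k}_{\left(  \theta_{123},\theta_{132},\theta_{213},\theta_{231},\theta_{312},\theta_{321}\right)  }:\mathcal{G}\rightarrow M
\]
with $\mathfrak{k}\circ f_{\sigma}=\theta_{\sigma}$ for each of the six permutations $\sigma$. The six auxiliary identities (\ref{t5.1.1})--(\ref{t5.1.6}) are then immediate: each asserts that two of the $\theta$'s agree after restriction to a prescribed subtype of $D^{3}$, and this holds because the two corresponding embeddings become literally equal once restricted there. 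For instance $f_{123}$ and $f_{231}$ both collapse to $\lambda_{\left(  d_{1},d_{2},d_{3}\right)  }\left(  d_{1},d_{2},d_{3},0,0,0,0,0\right)$ on $D^{3}\left\{  \left(  1,2\right)  ,\left(  1,3\right)  \right\}$ (there $d_{1}d_{2}=d_{1}d_{3}=0$), so $\theta_{123}$ and $\theta_{231}$ agree on that subtype, which is exactly (\ref{t5.1.1}); the remaining five are checked identically. By Proposition \ref{p5.5} the three outer strong differences are then legitimately defined.

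The core of the argument is to evaluate these three second-order strong differences in terms of $\mathfrak{k}$. Put $x:\equiv\mathfrak{k}\left(  0\right)$ and introduce the two tangent vectors at $x$ carried by the two top coordinates of $\mathcal{G}$,
\[
\tau_{7}:\equiv\lambda_{d:D}\mathfrak{k}\left(  0,0,0,0,0,0,d,0\right)  ,\qquad\tau_{8}:\equiv\lambda_{d:D}\mathfrak{k}\left(  0,0,0,0,0,0,0,d\right)  .
\]
Unwinding the definitions of $\underset{i}{\overset{\cdot}{-}}$ and of $\overset{\cdot}{-}$ exactly in the style of the proofs of Propositions \ref{p5.2} and \ref{p5.3}, but now realizing every intermediate $\mathfrak{m}$ and $\mathfrak{l}$ as a composite of $\mathfrak{k}$ with a reindexing of $\mathcal{G}$, I would show that each second-order strong difference is detected only by the top coordinates $d_{7},d_{8}$: the first-order ($d_{4},d_{5},d_{6}$) contributions all cancel, by the additivity of the strong difference (Proposition \ref{p5.2}) together with the vanishing products $\left(  i,7\right)  ,\left(  i,8\right)$ built into $\mathcal{G}$. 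Tracking the sign that the nesting $\left(  a-b\right)  -\left(  c-d\right)$ attaches to each constituent, and recalling that among the six $\theta$'s only $\theta_{312}$ carries a top coordinate ($d_{7}$) and only $\theta_{321}$ carries a top coordinate ($d_{8}$), I expect the three terms to take the values, in $\mathbf{T}_{x}M$,
\begin{align*}
\left(  \theta_{123}\underset{1}{\overset{\cdot}{-}}\theta_{132}\right)  \overset{\cdot}{-}\left(  \theta_{231}\underset{1}{\overset{\cdot}{-}}\theta_{321}\right)   &  =\tau_{8},\\
\left(  \theta_{231}\underset{2}{\overset{\cdot}{-}}\theta_{213}\right)  \overset{\cdot}{-}\left(  \theta_{312}\underset{2}{\overset{\cdot}{-}}\theta_{132}\right)   &  =\left(  -1\right)  \tau_{7},\\
\left(  \theta_{312}\underset{3}{\overset{\cdot}{-}}\theta_{321}\right)  \overset{\cdot}{-}\left(  \theta_{123}\underset{3}{\overset{\cdot}{-}}\theta_{213}\right)   &  =\tau_{7}+\left(  -1\right)  \tau_{8},
\end{align*}
the module structure being that of Theorem \ref{t4.1} (with addition $t_{1}+t_{2}=\lambda_{d:D}\mathfrak{l}_{\left(  t_{1},t_{2}\right)  }\left(  d,d\right)$, which again factors through $\mathfrak{k}$ since $d_{7}$ and $d_{8}$ are independent directions of $\mathcal{G}$).

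Granting this, (\ref{t5.1.7}) is pure module arithmetic: by commutativity and associativity (\ref{t4.1.2}), (\ref{t4.1.1}),
\[
\tau_{8}+\left(  -1\right)  \tau_{7}+\left(  \tau_{7}+\left(  -1\right)  \tau_{8}\right)  =\left(  \tau_{8}+\left(  -1\right)  \tau_{8}\right)  +\left(  \left(  -1\right)  \tau_{7}+\tau_{7}\right)  =0 .
\]
In fact the vanishing is forced independently of any sign convention: across the three terms $\theta_{312}$ occurs with opposite signs (negative in the second, positive in the third) and likewise $\theta_{321}$ (positive in the first, negative in the third), so the total coefficient of each top coordinate in the cyclic sum is zero. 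The main obstacle is the middle step --- proving that each second-order strong difference really does collapse onto its single top coordinate with no residual lower-order term. This is a long but essentially routine reindexing computation in the manner of Propositions \ref{p5.2}, \ref{p5.3} and \ref{p5.5}, and the one place that demands care is the cancellation of the $d_{4},d_{5},d_{6}$ contributions, which is exactly what the defining relations of $\mathcal{G}$ and the primordial Jacobi identity (Proposition \ref{p5.4}) are there to guarantee.
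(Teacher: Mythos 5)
Your proposal is correct and is essentially the paper's own argument: the paper likewise feeds the six hypotheses into Corollary \ref{cl5.5}, expresses all six $\theta$'s through the master map $\mathfrak{k}$, computes the three second-order strong differences to be $\lambda_{d:D}\mathfrak{k}(0,0,0,0,0,0,0,d)$, $\lambda_{d:D}\mathfrak{k}(0,0,0,0,0,0,-d,0)$ and $\lambda_{d:D}\mathfrak{k}(0,0,0,0,0,0,d,-d)$ --- exactly your $\tau_{8}$, $(-1)\tau_{7}$ and $\tau_{7}+(-1)\tau_{8}$ --- and concludes by module arithmetic in $\mathbf{T}_{x}M$. The only two divergences are minor: the paper proves (\ref{t5.1.1})--(\ref{t5.1.6}) by chaining the hypothesized identities through an intermediate $\theta$ (e.g.\ restricting $\theta_{231}=\theta_{213}$ on $D^{3}\{(1,3)\}$ and $\theta_{123}=\theta_{213}$ on $D^{3}\{(1,2)\}$ down to $D^{3}\{(1,2),(1,3)\}$) rather than by your cleaner observation that the embeddings $f_{\sigma}$ coincide after restriction, and the ``routine reindexing'' you defer is precisely what the paper carries out as an explicit list of formulas for the six relative strong differences, such as $\theta_{123}\underset{1}{\overset{\cdot}{-}}\theta_{132}=\lambda_{(d_{1},d_{2}):D^{2}}\,\mathfrak{k}(d_{1},0,0,-d_{2},0,0,0,0)$, from which your three claimed values follow at once.
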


\begin{proof}
The proof is divided into the proof of (\ref{t5.1.1})-(\ref{t5.1.6}) and that
of (\ref{t5.1.7}).

\begin{enumerate}
\item Since the identity
\[
\theta_{231}\circ\left(  \lambda_{\left(  d_{1},d_{2},d_{3}\right)
:D^{3}\left\{  \left(  1,3\right)  \right\}  }\left(  d_{1},d_{2}%
,d_{3}\right)  \right)  =\theta_{213}\circ\left(  \lambda_{\left(  d_{1}%
,d_{2},d_{3}\right)  :D^{3}\left\{  \left(  1,3\right)  \right\}  }\left(
d_{1},d_{2},d_{3}\right)  \right)
\]
obtains by assumption, we have
\[
\theta_{231}\circ\left(  \lambda_{\left(  d_{1},d_{2},d_{3}\right)
:D^{3}\left\{  \left(  1,2\right)  ,\left(  1,3\right)  \right\}  }\left(
d_{1},d_{2},d_{3}\right)  \right)  =\theta_{213}\circ\left(  \lambda_{\left(
d_{1},d_{2},d_{3}\right)  :D^{3}\left\{  \left(  1,2\right)  ,\left(
1,3\right)  \right\}  }\left(  d_{1},d_{2},d_{3}\right)  \right)
\]
Since the identity
\[
\theta_{123}\circ\left(  \lambda_{\left(  d_{1},d_{2},d_{3}\right)
:D^{3}\left\{  \left(  1,2\right)  \right\}  }\left(  d_{1},d_{2}%
,d_{3}\right)  \right)  =\theta_{213}\circ\left(  \lambda_{\left(  d_{1}%
,d_{2},d_{3}\right)  :D^{3}\left\{  \left(  1,2\right)  \right\}  }\left(
d_{1},d_{2},d_{3}\right)  \right)
\]
obtains by assumption, we have
\[
\theta_{123}\circ\left(  \lambda_{\left(  d_{1},d_{2},d_{3}\right)
:D^{3}\left\{  \left(  1,2\right)  ,\left(  1,3\right)  \right\}  }\left(
d_{1},d_{2},d_{3}\right)  \right)  =\theta_{213}\circ\left(  \lambda_{\left(
d_{1},d_{2},d_{3}\right)  :D^{3}\left\{  \left(  1,2\right)  ,\left(
1,3\right)  \right\}  }\left(  d_{1},d_{2},d_{3}\right)  \right)
\]
Therefore we have
\[
\theta_{123}\circ\left(  \lambda_{\left(  d_{1},d_{2},d_{3}\right)
:D^{3}\left\{  \left(  1,2\right)  ,\left(  1,3\right)  \right\}  }\left(
d_{1},d_{2},d_{3}\right)  \right)  =\theta_{231}\circ\left(  \lambda_{\left(
d_{1},d_{2},d_{3}\right)  :D^{3}\left\{  \left(  1,2\right)  ,\left(
1,3\right)  \right\}  }\left(  d_{1},d_{2},d_{3}\right)  \right)
\]
which is no other than (\ref{t5.1.1}). The remaining five identities
(\ref{t5.1.2})-(\ref{t5.1.6}) can be dealt with by the same token.

\item Since the identities
\begin{align*}
&  \lambda_{\left(  d_{1},d_{2},d_{3}\right)  :D^{3}}\theta_{123}\left(
d_{1},d_{2},d_{3}\right) \\
&  =\lambda_{\left(  d_{1},d_{2},d_{3}\right)  :D^{3}}\mathfrak{k}_{\left(
\theta_{123},\theta_{132},\theta_{213},\theta_{231},\theta_{312},\theta
_{321}\right)  }\left(  d_{1},d_{2},d_{3},0,0,0,0,0\right) \\
&  \lambda_{\left(  d_{1},d_{2},d_{3}\right)  :D^{3}}\theta_{132}\left(
d_{1},d_{2},d_{3}\right) \\
&  =\lambda_{\left(  d_{1},d_{2},d_{3}\right)  :D^{3}}\mathfrak{k}_{\left(
\theta_{123},\theta_{132},\theta_{213},\theta_{231},\theta_{312},\theta
_{321}\right)  }\left(  d_{1},d_{2},d_{3},d_{2}d_{3},0,0,0,0\right) \\
&  \lambda_{\left(  d_{1},d_{2},d_{3}\right)  :D^{3}}\theta_{213}\left(
d_{1},d_{2},d_{3}\right) \\
&  =\lambda_{\left(  d_{1},d_{2},d_{3}\right)  :D^{3}}\mathfrak{k}_{\left(
\theta_{123},\theta_{132},\theta_{213},\theta_{231},\theta_{312},\theta
_{321}\right)  }\left(  d_{1},d_{2},d_{3},0,0,d_{1}d_{2},0,0\right) \\
&  \lambda_{\left(  d_{1},d_{2},d_{3}\right)  :D^{3}}\theta_{231}\left(
d_{1},d_{2},d_{3}\right) \\
&  =\lambda_{\left(  d_{1},d_{2},d_{3}\right)  :D^{3}}\mathfrak{k}_{\left(
\theta_{123},\theta_{132},\theta_{213},\theta_{231},\theta_{312},\theta
_{321}\right)  }\left(  d_{1},d_{2},d_{3},0,d_{1}d_{3},d_{1}d_{2},0,0\right)
\\
&  \lambda_{\left(  d_{1},d_{2},d_{3}\right)  :D^{3}}\theta_{312}\left(
d_{1},d_{2},d_{3}\right) \\
&  =\lambda_{\left(  d_{1},d_{2},d_{3}\right)  :D^{3}}\mathfrak{k}_{\left(
\theta_{123},\theta_{132},\theta_{213},\theta_{231},\theta_{312},\theta
_{321}\right)  }\left(  d_{1},d_{2},d_{3},d_{2}d_{3},d_{1}d_{3},0,d_{1}%
d_{2}d_{3},0\right) \\
&  \lambda_{\left(  d_{1},d_{2},d_{3}\right)  :D^{3}}\theta_{321}\left(
d_{1},d_{2},d_{3}\right) \\
&  =\lambda_{\left(  d_{1},d_{2},d_{3}\right)  :D^{3}}\mathfrak{k}_{\left(
\theta_{123},\theta_{132},\theta_{213},\theta_{231},\theta_{312},\theta
_{321}\right)  }\left(  d_{1},d_{2},d_{3},d_{2}d_{3},d_{1}d_{3},d_{1}%
d_{2},0,d_{1}d_{2}d_{3}\right)
\end{align*}
obtain so that the identities
\begin{align*}
&  \lambda_{\left(  d_{1},d_{2}\right)  :D^{2}}\left(  \theta_{123}%
\underset{1}{\overset{\cdot}{-}}\theta_{132}\right)  \left(  d_{1}%
,d_{2}\right) \\
&  =\lambda_{\left(  d_{1},d_{2}\right)  :D^{2}}\mathfrak{k}_{\left(
\theta_{123},\theta_{132},\theta_{213},\theta_{231},\theta_{312},\theta
_{321}\right)  }\left(  d_{1},0,0,-d_{2},0,0,0,0\right) \\
&  \lambda_{\left(  d_{1},d_{2}\right)  :D^{2}}\left(  \theta_{231}%
\underset{1}{\overset{\cdot}{-}}\theta_{321}\right)  \left(  d_{1}%
,d_{2}\right) \\
&  =\lambda_{\left(  d_{1},d_{2}\right)  :D^{2}}\mathfrak{k}_{\left(
\theta_{123},\theta_{132},\theta_{213},\theta_{231},\theta_{312},\theta
_{321}\right)  }\left(  d_{1},0,0,-d_{2},0,0,0,-d_{1}d_{2}\right) \\
&  \lambda_{\left(  d_{1},d_{2}\right)  :D^{2}}\left(  \theta_{231}%
\underset{2}{\overset{\cdot}{-}}\theta_{213}\right)  \left(  d_{1}%
,d_{2}\right) \\
&  =\lambda_{\left(  d_{1},d_{2}\right)  :D^{2}}\mathfrak{k}_{\left(
\theta_{123},\theta_{132},\theta_{213},\theta_{231},\theta_{312},\theta
_{321}\right)  }\left(  0.d_{1},0,d_{2},0,0,0,0\right) \\
&  \lambda_{\left(  d_{1},d_{2}\right)  :D^{2}}\left(  \theta_{312}%
\underset{2}{\overset{\cdot}{-}}\theta_{132}\right)  \left(  d_{1}%
,d_{2}\right) \\
&  =\lambda_{\left(  d_{1},d_{2}\right)  :D^{2}}\mathfrak{k}_{\left(
\theta_{123},\theta_{132},\theta_{213},\theta_{231},\theta_{312},\theta
_{321}\right)  }\left(  0,d_{1},0,0,d_{2},0,d_{1}d_{2},0\right) \\
&  \lambda_{\left(  d_{1},d_{2}\right)  :D^{2}}\left(  \theta_{312}%
\underset{3}{\overset{\cdot}{-}}\theta_{321}\right)  \left(  d_{1}%
,d_{2}\right) \\
&  =\lambda_{\left(  d_{1},d_{2}\right)  :D^{2}}\mathfrak{k}_{\left(
\theta_{123},\theta_{132},\theta_{213},\theta_{231},\theta_{312},\theta
_{321}\right)  }\left(  0,0,d_{1},0,0,-d_{2},d_{1}d_{2},-d_{1}d_{2}\right) \\
&  \lambda_{\left(  d_{1},d_{2}\right)  :D^{2}}\left(  \theta_{123}%
\underset{3}{\overset{\cdot}{-}}\theta_{213}\right)  \left(  d_{1}%
,d_{2}\right) \\
&  =\lambda_{\left(  d_{1},d_{2}\right)  :D^{2}}\mathfrak{k}_{\left(
\theta_{123},\theta_{132},\theta_{213},\theta_{231},\theta_{312},\theta
_{321}\right)  }\left(  0,0,d_{1},0,0,-d_{2},0,0\right)
\end{align*}
take place, we have
\begin{align*}
&  \lambda_{d:D}\left(  \left(  \theta_{123}\underset{1}{\overset{\cdot}{-}%
}\theta_{132}\right)  \overset{\cdot}{-}\left(  \theta_{231}\underset
{1}{\overset{\cdot}{-}}\theta_{321}\right)  \right)  \left(  d\right) \\
&  =\lambda_{d:D}\mathfrak{k}_{\left(  \theta_{123},\theta_{132},\theta
_{213},\theta_{231},\theta_{312},\theta_{321}\right)  }\left(
0,0,0,0,0,0,0,d\right) \\
&  \lambda_{d:D}\left(  \left(  \theta_{231}\underset{2}{\overset{\cdot}{-}%
}\theta_{213}\right)  \overset{\cdot}{-}\left(  \theta_{312}\underset
{2}{\overset{\cdot}{-}}\theta_{132}\right)  \right)  \left(  d\right) \\
&  =\lambda_{d:D}\mathfrak{k}_{\left(  \theta_{123},\theta_{132},\theta
_{213},\theta_{231},\theta_{312},\theta_{321}\right)  }\left(
0,0,0,0,0,0,-d,0\right) \\
&  \lambda_{d:D}\left(  \left(  \theta_{312}\underset{3}{\overset{\cdot}{-}%
}\theta_{321}\right)  \overset{\cdot}{-}\left(  \theta_{123}\underset
{3}{\overset{\cdot}{-}}\theta_{213}\right)  \right)  \left(  d\right) \\
&  =\lambda_{d:D}\mathfrak{k}_{\left(  \theta_{123},\theta_{132},\theta
_{213},\theta_{231},\theta_{312},\theta_{321}\right)  }\left(
0,0,0,0,0,0,d,-d\right)
\end{align*}
This completes the proof.
\end{enumerate}
\end{proof}

\section{\label{s6}Vector Fields}

The trinity of the three notions of vector fields in synthetic differential
geometry, namely the identification of sections of tangent vector bundles,
infinitesimal flows and infinitesimal transformations discussed in
\S 3.2.1\ of \cite{la}, remains valid in the following sense.

\begin{theorem}
\label{t6.1}Let $M$\ be a microlinear type. The following three types are
mutually equivalent.

\begin{itemize}
\item the type of sections of the fibration $\lambda_{x:M}\mathbf{T}%
_{x}M:M\rightarrow\mathcal{U}$, namely,%
\[%
{\displaystyle\prod\limits_{x:M}}
\mathbf{T}_{x}M
\]

\item the type of infinitesimal flows on $\left\Vert M\right\Vert _{0}$,
namely, mappings $f:D\times\left\Vert M\right\Vert _{0}\rightarrow\left\Vert
M\right\Vert _{0}$ in accordance with%
\[%
{\displaystyle\prod\limits_{x:\left\Vert M\right\Vert _{0}}}
f(0,x)=x
\]

\item the type of infinitesimal transformations of $\left\Vert M\right\Vert
_{0}$, namely, mappings $X:D\rightarrow\left\Vert M\right\Vert _{0}%
\rightarrow\left\Vert M\right\Vert _{0}$ in accordance with%
\[
X_{0}=\mathrm{id}_{\left\Vert M\right\Vert _{0}}%
\]
where we prefer to write $X_{d}$ in place of $X\left(  d\right)  $ as in
\cite{la}.
\end{itemize}
\end{theorem}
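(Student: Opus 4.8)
The plan is to establish the three equivalences by exhibiting explicit maps in both directions and checking that they are mutually inverse, exploiting the fact that all three types involve maps into the set truncation $\left\Vert M\right\Vert _{0}$, so that everything takes place among sets and we may argue by function extensionality together with the principle of unique choice. The key observation is that an infinitesimal flow $f:D\times\left\Vert M\right\Vert _{0}\rightarrow\left\Vert M\right\Vert _{0}$ and an infinitesimal transformation $X:D\rightarrow\left\Vert M\right\Vert _{0}\rightarrow\left\Vert M\right\Vert _{0}$ are related by the obvious currying/uncurrying isomorphism, under which the flow condition $f(0,x)=x$ corresponds exactly to the transformation condition $X_{0}=\mathrm{id}_{\left\Vert M\right\Vert _{0}}$. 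This second equivalence is therefore immediate from the universal property of the exponential (function extensionality), and it is the routine part.

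The substantive content is the equivalence between sections of the tangent bundle and infinitesimal flows. First I would pass from $M$ to $\left\Vert M\right\Vert _{0}$ throughout: by Proposition \ref{p3.1}(1) the set truncation is again microlinear, and since $\mathbf{T}_{x}M$ is by definition a subtype of $D\rightarrow\left\Vert M\right\Vert _{0}$, a section of the fibration $\lambda_{x:M}\mathbf{T}_{x}M$ amounts to a dependent function assigning to each $x:M$ a tangent vector $t_{x}:D\rightarrow\left\Vert M\right\Vert _{0}$ with $t_{x}(0)=\left\vert x\right\vert _{0}$. Given a section $\sigma$, I would define the associated flow by $f(d,|x|_{0}):\equiv\sigma(x)(d)$, using the universal property of set truncation to factor through $\left\Vert M\right\Vert _{0}$; the flow condition $f(0,x)=x$ is exactly $t_{x}(0)=|x|_{0}$. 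Conversely, given a flow $f$, I would define a section by $\sigma(x):\equiv\lambda_{d:D}f(d,|x|_{0})$, which lands in $\mathbf{T}_{x}M$ precisely because $f(0,|x|_{0})=|x|_{0}$.

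The one genuine subtlety is the well-definedness of the map from sections to flows: the domain of a flow is indexed by $\left\Vert M\right\Vert _{0}$ rather than $M$, so to produce $f$ from $\sigma$ I must check that $\lambda_{x:M}\sigma(x)$ descends along $|\cdot|_{0}:M\rightarrow\left\Vert M\right\Vert _{0}$. Since the target $D\rightarrow\left\Vert M\right\Vert _{0}$ is a set (as $\left\Vert M\right\Vert _{0}$ is a set and sets are closed under exponentiation), the recursion principle for set truncation applies without any coherence obligations, so this descent is automatic and amounts to invoking the principle of unique choice exactly as in Proposition \ref{p2.1} and Proposition \ref{p2.4}. I expect this bookkeeping with the truncation — ensuring that each construction genuinely factors through $\left\Vert M\right\Vert _{0}$ and that no higher coherence is secretly required — to be the main obstacle, although it dissolves once one notes that all the types in sight are sets.

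Finally I would assemble the two equivalences into a single chain
\[
\prod_{x:M}\mathbf{T}_{x}M\;\backsimeq\;\Bigl(\sum_{f:D\times\left\Vert M\right\Vert _{0}\rightarrow\left\Vert M\right\Vert _{0}}\prod_{x:\left\Vert M\right\Vert _{0}}f(0,x)=x\Bigr)\;\backsimeq\;\Bigl(\sum_{X:D\rightarrow\left\Vert M\right\Vert _{0}\rightarrow\left\Vert M\right\Vert _{0}}X_{0}=\mathrm{id}_{\left\Vert M\right\Vert _{0}}\Bigr),
\]
verifying the round-trip identities by function extensionality in each case, and conclude by transitivity of $\backsimeq$.
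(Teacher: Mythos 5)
Your proposal is correct and follows essentially the same route as the paper: the paper likewise reduces the theorem to currying/argument exchange (for the flows--transformations equivalence, which it also dismisses as routine) together with the universal property of set truncation, stated there as the natural equivalence $M\rightarrow\left\Vert M\right\Vert _{0}\backsimeq\left\Vert M\right\Vert _{0}\rightarrow\left\Vert M\right\Vert _{0}$, which is exactly the descent/recursion principle you invoke. Your explicit treatment of the well-definedness of the descent (valid because $D\rightarrow\left\Vert M\right\Vert _{0}$ is a set) is the same key point, just spelled out as maps in both directions rather than as a chain of natural identifications.
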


\begin{proof}
A section of the dependent type family $\lambda_{x:M}\mathbf{T}_{x}M$\ can be
identified with a mapping%
\[
\widetilde{f}:M\rightarrow D\rightarrow\left\Vert M\right\Vert _{0}%
\]
in accordance with%
\[%
{\displaystyle\prod\limits_{x:M}}
\widetilde{f}(x,0)=\left\vert x\right\vert _{0}%
\]
which can naturally be identified with a mapping%
\[
\widehat{f}:D\rightarrow M\rightarrow\left\Vert M\right\Vert _{0}%
\]
in accordance with%
\[%
{\displaystyle\prod\limits_{x:M}}
\widehat{f}(0,x)=\left\vert x\right\vert _{0}%
\]
Since%
\[
M\rightarrow\left\Vert M\right\Vert _{0}\backsimeq\left\Vert M\right\Vert
_{0}\rightarrow\left\Vert M\right\Vert _{0}%
\]
obtains naturally, $\widehat{f}$ can be identified with a mapping%
\[
f:D\rightarrow\left\Vert M\right\Vert _{0}\rightarrow\left\Vert M\right\Vert
_{0}%
\]
in accordance with%
\[%
{\displaystyle\prod\limits_{x:\left\Vert M\right\Vert _{0}}}
f(0,x)=x
\]
This has established the equivalence between the first and the second. The
equivalence between the second and the third can be established more directly,
which is safely left to the reader.
\end{proof}

\begin{notation}
Given a microlinear type $M$, the notaion $\mathfrak{X}\left(  M\right)
$\ denotes one of the equivalent three types in the above theorem, but it
usually means the third one in the rest of this paper unless specified otherwise.
\end{notation}

\begin{proposition}
\label{p6.1}(cf. Proposition 3 of \S 3.2 in \cite{la}) Let $M$\ be a
microlinear type. Let $X:\mathfrak{X}\left(  M\right)  $. Then we have%
\[%
{\displaystyle\prod\limits_{\left(  d_{1},d_{2}\right)  :D\left(  2\right)  }}
X_{d_{1}}\circ X_{d_{2}}=X_{d_{1}+d_{2}}%
\]

\end{proposition}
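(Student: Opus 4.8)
The plan is to read the desired identity as an equality of two maps out of the small object $D\left(  2\right)$ and to settle it by the quasi-colimit presentation of $D\left(  2\right)$ recorded in Lemma \ref{l4.1}. First I would put $N:\equiv\left\Vert M\right\Vert _{0}\rightarrow\left\Vert M\right\Vert _{0}$ and note that $N$ is a microlinear set: by Proposition \ref{p3.1}(1) the set truncation $\left\Vert M\right\Vert _{0}$ is microlinear, so by Proposition \ref{p3.1}(3), taking $\left\Vert M\right\Vert _{0}$ as the arbitrary exponent, the function type $N$ is microlinear too. By function extensionality the asserted product $\prod_{\left(  d_{1},d_{2}\right)  :D\left(  2\right)  }X_{d_{1}}\circ X_{d_{2}}=X_{d_{1}+d_{2}}$ is exactly the statement that the two maps
\[
\lambda_{\left(  d_{1},d_{2}\right)  :D\left(  2\right)  }\left(  X_{d_{1}}\circ X_{d_{2}}\right)  ,\qquad\lambda_{\left(  d_{1},d_{2}\right)  :D\left(  2\right)  }X_{d_{1}+d_{2}}:D\left(  2\right)  \rightarrow N
\]
coincide. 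Here $X_{d_{1}+d_{2}}$ is legitimate because $\left(  d_{1}+d_{2}\right)  ^{2}=d_{1}^{2}+2d_{1}d_{2}+d_{2}^{2}$ vanishes on $D\left(  2\right)$, so that $d_{1}+d_{2}:D$.

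Next I would compute the restrictions of these two maps along the two canonical inclusions $\lambda_{d:D}\left(  d,0\right)$ and $\lambda_{d:D}\left(  0,d\right)$ of $D$ into $D\left(  2\right)$ occurring in the quasi-colimit diagram of Lemma \ref{l4.1}. Using $X_{0}=\mathrm{id}_{\left\Vert M\right\Vert _{0}}$ one finds, for the first map,
\[
\left(  \lambda_{\left(  d_{1},d_{2}\right)  :D\left(  2\right)  }X_{d_{1}}\circ X_{d_{2}}\right)  \circ\left(  \lambda_{d:D}\left(  d,0\right)  \right)  =\lambda_{d:D}\left(  X_{d}\circ X_{0}\right)  =\lambda_{d:D}X_{d},
\]
and its restriction along $\lambda_{d:D}\left(  0,d\right)$ is likewise $\lambda_{d:D}\left(  X_{0}\circ X_{d}\right)  =\lambda_{d:D}X_{d}$. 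For the second map the corresponding restrictions are $\lambda_{d:D}X_{d+0}=\lambda_{d:D}X_{d}$ and $\lambda_{d:D}X_{0+d}=\lambda_{d:D}X_{d}$. Thus both maps restrict to the same compatible pair $\left(  \lambda_{d:D}X_{d},\lambda_{d:D}X_{d}\right)$ of maps $D\rightarrow N$, whose common value at $0:D$ is $X_{0}=\mathrm{id}$.

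Finally, since $N$ is microlinear and the diagram of Lemma \ref{l4.1} is a quasi-colimit, exponentiating that diagram into $N$ produces a limit (pullback) diagram; equivalently, the map sending $\varphi:D\left(  2\right)  \rightarrow N$ to the pair of its restrictions along $\lambda_{d:D}\left(  d,0\right)$ and $\lambda_{d:D}\left(  0,d\right)$ is an equivalence onto the type of compatible pairs. The existence half of this equivalence is what furnishes the mediating map $\mathfrak{l}_{\left(  t_{1},t_{2}\right)  }$ of Corollary \ref{cl4.1}; here I need its uniqueness half. Both of our maps lie in the fibre of this equivalence over the compatible pair $\left(  \lambda_{d:D}X_{d},\lambda_{d:D}X_{d}\right)$, and that fibre is contractible, so the two maps are equal, which is precisely the identity sought. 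I expect the only delicate point to be this very step: what I use is not the bare existence of Corollary \ref{cl4.1} but the full \emph{homotopical uniqueness} encoded by the limit-diagram clause of microlinearity, and so I must first confirm that $N=\left\Vert M\right\Vert _{0}\rightarrow\left\Vert M\right\Vert _{0}$ really is microlinear (via Proposition \ref{p3.1}) before that uniqueness becomes available.
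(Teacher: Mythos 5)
Your proposal is correct and follows essentially the same route as the paper's own proof: both verify that the two maps $\lambda_{\left(  d_{1},d_{2}\right)  :D\left(  2\right)  }\left(  X_{d_{1}}\circ X_{d_{2}}\right)$ and $\lambda_{\left(  d_{1},d_{2}\right)  :D\left(  2\right)  }X_{d_{1}+d_{2}}$ restrict identically along the two axis inclusions of $D$ into $D\left(  2\right)$ (using $X_{0}=\mathrm{id}$), and then conclude by the quasi-colimit property of Lemma \ref{l4.1} via Corollary \ref{cl4.1}. Your extra care in checking that $\left\Vert M\right\Vert _{0}\rightarrow\left\Vert M\right\Vert _{0}$ is microlinear (via Proposition \ref{p3.1}) and in observing that it is the uniqueness half of the limit property, not bare existence, that clinches the argument only makes explicit what the paper's terse proof leaves implicit.
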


\begin{proof}
It is easy to see that%
\begin{align*}
\left(  \lambda_{\left(  d_{1},d_{2}\right)  :D\left(  2\right)  }X_{d_{1}%
}\circ X_{d_{2}}\right)  \circ\left(  \lambda_{d:D}\left(  d,0\right)
\right)   &  =\left(  \lambda_{\left(  d_{1},d_{2}\right)  :D\left(  2\right)
}X_{d_{1}+d_{2}}\right)  \circ\left(  \lambda_{d:D}\left(  d,0\right)  \right)
\\
\left(  \lambda_{\left(  d_{1},d_{2}\right)  :D\left(  2\right)  }X_{d_{1}%
}\circ X_{d_{2}}\right)  \circ\left(  \lambda_{d:D}\left(  0,d\right)
\right)   &  =\left(  \lambda_{\left(  d_{1},d_{2}\right)  :D\left(  2\right)
}X_{d_{1}+d_{2}}\right)  \circ\left(  \lambda_{d:D}\left(  0,d\right)
\right)
\end{align*}
so that the desired result follows by dint of Corollary \ref{cl4.1}.
\end{proof}

\begin{proposition}
\label{p6.2}(cf. Proposition 6 of \S 3.2 in \cite{la}) Let $M$\ be a
microlinear type. Let $X,Y:\mathfrak{X}\left(  M\right)  $. Then we have%
\begin{align*}%
{\displaystyle\prod\limits_{d:D}}
X_{d}\circ Y_{d}  &  =\left(  X+Y\right)  _{d}\\%
{\displaystyle\prod\limits_{d:D}}
Y_{d}\circ X_{d}  &  =\left(  X+Y\right)  _{d}%
\end{align*}

\end{proposition}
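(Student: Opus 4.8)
The plan is to follow the pattern of Proposition \ref{p6.1}, reducing both identities to the homotopical uniqueness of the map $\mathfrak{l}_{\left(t_{1},t_{2}\right)}$ supplied by Corollary \ref{cl4.1}. Throughout I regard $X$ and $Y$ as infinitesimal transformations, so that $X_{0}=Y_{0}=\mathrm{id}_{\left\Vert M\right\Vert _{0}}$, while the sum $X+Y$ is read off the first of the three equivalent descriptions of Theorem \ref{t6.1}: it is the pointwise sum of the sections $\lambda_{x}\lambda_{d:D}X_{d}\left(\left\vert x\right\vert _{0}\right)$ and $\lambda_{x}\lambda_{d:D}Y_{d}\left(\left\vert x\right\vert _{0}\right)$ of the tangent bundle, the $\mathbb{R}$-module structure on each fibre $\mathbf{T}_{x}M$ being the one of Theorem \ref{t4.1}. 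Since $\left\Vert M\right\Vert _{0}$ is a microlinear set by Proposition \ref{p3.1}, Corollary \ref{cl4.1} applies to it directly.

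First I would fix $\bar{x}:\left\Vert M\right\Vert _{0}$ and form
\[
\theta:\equiv\lambda_{\left(d_{1},d_{2}\right):D\left(2\right)}X_{d_{1}}\left(Y_{d_{2}}\left(\bar{x}\right)\right):D\left(2\right)\rightarrow\left\Vert M\right\Vert _{0}.
\]
Using $Y_{0}=\mathrm{id}$ and $X_{0}=\mathrm{id}$ one reads off the two legs
\begin{align*}
\theta\circ\left(\lambda_{d:D}\left(d,0\right)\right) &=\lambda_{d:D}X_{d}\left(\bar{x}\right),\\
\theta\circ\left(\lambda_{d:D}\left(0,d\right)\right) &=\lambda_{d:D}Y_{d}\left(\bar{x}\right),
\end{align*}
which are exactly the tangent vectors $t_{\bar{x}}^{X}:\equiv\lambda_{d:D}X_{d}\left(\bar{x}\right)$ and $t_{\bar{x}}^{Y}:\equiv\lambda_{d:D}Y_{d}\left(\bar{x}\right)$ at $\bar{x}$ (each sends $0$ to $\bar{x}$). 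By the uniqueness built into Corollary \ref{cl4.1}, used precisely as in Theorem \ref{t4.1} and in Proposition \ref{p6.1} to pin down a map out of $D\left(2\right)$ from its two legs, this forces $\theta=\mathfrak{l}_{\left(t_{\bar{x}}^{X},t_{\bar{x}}^{Y}\right)}$.

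Evaluating on the diagonal (legitimate since $d^{2}=0$ places $\left(d,d\right)$ in $D\left(2\right)$) then gives
\[
X_{d}\left(Y_{d}\left(\bar{x}\right)\right)=\theta\left(d,d\right)=\mathfrak{l}_{\left(t_{\bar{x}}^{X},t_{\bar{x}}^{Y}\right)}\left(d,d\right)=\left(t_{\bar{x}}^{X}+t_{\bar{x}}^{Y}\right)\left(d\right)=\left(X+Y\right)_{d}\left(\bar{x}\right),
\]
the penultimate step being the definition of addition on $\mathbf{T}_{\bar{x}}M$ and the last the unwinding of the correspondence of Theorem \ref{t6.1}. As $\bar{x}$ and $d$ were arbitrary, function extensionality yields the first identity. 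For the second I would argue symmetrically with $\lambda_{\left(d_{1},d_{2}\right):D\left(2\right)}Y_{d_{1}}\left(X_{d_{2}}\left(\bar{x}\right)\right)$, whose legs are now $t_{\bar{x}}^{Y}$ and $t_{\bar{x}}^{X}$, so that $Y_{d}\left(X_{d}\left(\bar{x}\right)\right)=\left(t_{\bar{x}}^{Y}+t_{\bar{x}}^{X}\right)\left(d\right)$; commutativity (\ref{t4.1.2}) of Theorem \ref{t4.1} then identifies this with $\left(X+Y\right)_{d}\left(\bar{x}\right)$.

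The only genuinely delicate point is the bookkeeping of Theorem \ref{t6.1}: one must confirm that the pointwise $\mathbb{R}$-module addition of sections, transported to the infinitesimal-transformation description, is exactly $\left(X+Y\right)_{d}\left(\bar{x}\right)=\mathfrak{l}_{\left(t_{\bar{x}}^{X},t_{\bar{x}}^{Y}\right)}\left(d,d\right)$. Once that identification is fixed, the remainder is a routine check of the two leg conditions together with the quasi-colimit uniqueness already exploited throughout Theorem \ref{t4.1}.
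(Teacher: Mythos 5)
Your proof is correct, and it rests on the same engine as the paper's: compute the two legs of a composite map out of $D\left(  2\right)  $, invoke the homotopical uniqueness behind Corollary \ref{cl4.1} to identify that map with the filler $\mathfrak{l}$, and evaluate on the diagonal. The difference is where you apply it. The paper treats a vector field as a single tangent vector to the set $\left\Vert M\right\Vert _{0}\rightarrow\left\Vert M\right\Vert _{0}$ (microlinear by Proposition \ref{p3.1}) at the point $\mathrm{id}$, so the whole proof is one application of Corollary \ref{cl4.1} in that exponential: $\lambda_{\left(  d_{1},d_{2}\right)  :D\left(  2\right)  }X_{d_{1}}\circ Y_{d_{2}}$ has legs $\lambda_{d:D}X_{d}$ and $\lambda_{d:D}Y_{d}$, hence equals $\mathfrak{l}_{\left(  X,Y\right)  }$, and its diagonal is $X+Y$ by the very definition of tangent-vector addition; no pointwise unwinding, no function extensionality, and no appeal to the section picture of Theorem \ref{t6.1} is needed. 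You instead work fibrewise at each $\bar{x}:\left\Vert M\right\Vert _{0}$, which is also sound, but it forces the two extra steps you yourself flag: (i) the compatibility between pointwise addition of sections and the transformation picture, which is settled by exactly the same uniqueness argument one more time (the map $\lambda_{\left(  d_{1},d_{2}\right)  :D\left(  2\right)  }\mathfrak{l}_{\left(  X,Y\right)  }\left(  d_{1},d_{2}\right)  \left(  \bar{x}\right)  $ has legs $t_{\bar{x}}^{X}$ and $t_{\bar{x}}^{Y}$, hence equals $\mathfrak{l}_{\left(  t_{\bar{x}}^{X},t_{\bar{x}}^{Y}\right)  }$), and (ii) function extensionality to reassemble the identity of maps from its pointwise instances. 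Also, for the second identity the paper avoids your appeal to commutativity (\ref{t4.1.2}) by parametrizing as $\lambda_{\left(  d_{1},d_{2}\right)  :D\left(  2\right)  }Y_{d_{2}}\circ X_{d_{1}}$, whose legs again come out in the order $\left(  X,Y\right)  $. Both routes are valid; the paper's is more economical, while yours makes the fibrewise module structure and the dictionary of Theorem \ref{t6.1} explicit.
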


\begin{proof}
It is easy to see that%
\begin{align*}
\left(  \lambda_{\left(  d_{1},d_{2}\right)  :D\left(  2\right)  }X_{d_{1}%
}\circ Y_{d_{2}}\right)  \circ\left(  \lambda_{d:D}\left(  d,0\right)
\right)   &  =\lambda_{d:D}X_{d}\\
\left(  \lambda_{\left(  d_{1},d_{2}\right)  :D\left(  2\right)  }X_{d_{1}%
}\circ Y_{d_{2}}\right)  \circ\left(  \lambda_{d:D}\left(  0,d\right)
\right)   &  =\lambda_{d:D}Y_{d}%
\end{align*}
so that the first desired result follows by dint of Corollary \ref{cl4.1}.%
\begin{align*}
\left(  \lambda_{\left(  d_{1},d_{2}\right)  :D\left(  2\right)  }Y_{d_{2}%
}\circ X_{d_{1}}\right)  \circ\left(  \lambda_{d:D}\left(  d,0\right)
\right)   &  =\lambda_{d:D}X_{d}\\
\left(  \lambda_{\left(  d_{1},d_{2}\right)  :D\left(  2\right)  }Y_{d_{2}%
}\circ X_{d_{1}}\right)  \circ\left(  \lambda_{d:D}\left(  0,d\right)
\right)   &  =\lambda_{d:D}Y_{d}%
\end{align*}
so that the second desired result follows by dint of Corollary \ref{cl4.1}.
\end{proof}

\begin{definition}
Let $M$ be a microlinear set. Given
\begin{align*}
\theta_{1}  &  :D^{n_{1}}\rightarrow M\rightarrow M\\
\theta_{2}  &  :D^{n_{2}}\rightarrow M\rightarrow M
\end{align*}
we define
\[
\theta_{1}\ast\theta_{2}:D^{n_{1}+n_{2}}\rightarrow M\rightarrow M
\]
to be
\[
\theta_{1}\ast\theta_{2}:\equiv\lambda_{\left(  d_{1},...,d_{n_{1}+n_{2}%
}\right)  :D^{n_{1}+n_{2}}}\theta_{1}\left(  d_{n_{2}+1},...,d_{n_{1}+n_{2}%
}\right)  \circ\theta_{2}\left(  d_{1},...,d_{n_{2}}\right)
\]

\end{definition}

It is easy to see that

\begin{lemma}
\label{l6.2}Let $M$ be a microlinear type. Given
\begin{align*}
\theta_{1}  &  :D^{n_{1}}\rightarrow M\rightarrow M\\
\theta_{2}  &  :D^{n_{2}}\rightarrow M\rightarrow M\\
\theta_{3}  &  :D^{n_{3}}\rightarrow M\rightarrow M
\end{align*}
we have
\[
\left(  \theta_{1}\ast\theta_{2}\right)  \ast\theta_{3}=\theta_{1}\ast\left(
\theta_{2}\ast\theta_{3}\right)
\]

\end{lemma}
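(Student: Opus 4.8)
The plan is to prove this purely formally, by unfolding both bracketings of $\ast$ directly and observing that they reduce to one and the same triple composite; the only substantive ingredients will be the associativity of ordinary function composition $\circ$ on $M\rightarrow M$ together with function extensionality (Axiom 2.9.3 of \cite{vo}) to promote a pointwise identity into an identity of the maps $D^{n_{1}+n_{2}+n_{3}}\rightarrow M\rightarrow M$. No appeal to microlinearity is needed, since the statement is a formal consequence of the definition of $\ast$.

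First I would expand the left-hand side. Fixing an arbitrary $\left( d_{1},...,d_{n_{1}+n_{2}+n_{3}}\right) :D^{n_{1}+n_{2}+n_{3}}$ and applying the definition of $\ast$ with arities $n_{1}+n_{2}$ and $n_{3}$, and then once more to the inner factor $\theta_{1}\ast\theta_{2}$ after reindexing, one gets
\begin{align*}
&  \left(  \left(  \theta_{1}\ast\theta_{2}\right)  \ast\theta_{3}\right)
\left(  d_{1},...,d_{n_{1}+n_{2}+n_{3}}\right) \\
&  =\left(  \theta_{1}\ast\theta_{2}\right)  \left(  d_{n_{3}+1}
,...,d_{n_{1}+n_{2}+n_{3}}\right)  \circ\theta_{3}\left(  d_{1},...,d_{n_{3}
}\right) \\
&  =\theta_{1}\left(  d_{n_{2}+n_{3}+1},...,d_{n_{1}+n_{2}+n_{3}}\right)
\circ\theta_{2}\left(  d_{n_{3}+1},...,d_{n_{2}+n_{3}}\right)  \circ\theta
_{3}\left(  d_{1},...,d_{n_{3}}\right)
\end{align*}

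Next I would expand the right-hand side in the same fashion, unfolding $\theta_{2}\ast\theta_{3}$ inside $\theta_{1}\ast\left( -\right) $, which yields
\begin{align*}
&  \left(  \theta_{1}\ast\left(  \theta_{2}\ast\theta_{3}\right)  \right)
\left(  d_{1},...,d_{n_{1}+n_{2}+n_{3}}\right) \\
&  =\theta_{1}\left(  d_{n_{2}+n_{3}+1},...,d_{n_{1}+n_{2}+n_{3}}\right)
\circ\left(  \theta_{2}\ast\theta_{3}\right)  \left(  d_{1},...,d_{n_{2}+n_{3}
}\right) \\
&  =\theta_{1}\left(  d_{n_{2}+n_{3}+1},...,d_{n_{1}+n_{2}+n_{3}}\right)
\circ\theta_{2}\left(  d_{n_{3}+1},...,d_{n_{2}+n_{3}}\right)  \circ\theta
_{3}\left(  d_{1},...,d_{n_{3}}\right)
\end{align*}
Comparing the last lines of the two displays, the two sides agree pointwise, so function extensionality delivers the claimed equality of maps.

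The computation is routine; the only point that demands genuine care — and hence the step I would check most carefully — is the index bookkeeping across the two nested uses of the definition of $\ast$. Concretely, one must verify that in each bracketing the final block of $n_{1}$ variables is fed to $\theta_{1}$, the middle block of $n_{2}$ variables to $\theta_{2}$, and the initial block of $n_{3}$ variables to $\theta_{3}$, with the three factors composed in the order $\theta_{1}\circ\theta_{2}\circ\theta_{3}$. Once this is confirmed, the associativity of $\circ$ is exactly what reconciles the two displayed triple composites, and the lemma follows.
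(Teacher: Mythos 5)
Your proof is correct: the paper itself gives no argument for this lemma, merely prefacing it with ``It is easy to see that,'' and your unfolding of the definition of $\ast$ on both bracketings — with the index bookkeeping showing each side reduces to $\theta_{1}\left(d_{n_{2}+n_{3}+1},...,d_{n_{1}+n_{2}+n_{3}}\right)\circ\theta_{2}\left(d_{n_{3}+1},...,d_{n_{2}+n_{3}}\right)\circ\theta_{3}\left(d_{1},...,d_{n_{3}}\right)$, reconciled by associativity of $\circ$ and function extensionality — is exactly the routine verification the paper leaves to the reader. You are also right that the microlinearity hypothesis plays no role in the argument.
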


\begin{remark}
Therefore, when various $\theta_{i}:D^{n_{i}}\rightarrow M\rightarrow
M$\ $\left(  1\leq i\leq m\right)  $ are concatenated by $\ast$, we can omit
parentheses, so that we can write
\[
\theta_{1}\ast...\ast\theta_{m}:D^{n_{1}+...+n_{m}}\rightarrow M\rightarrow M
\]

\end{remark}

\begin{definition}
Let $M$\ be a microlinear type. Given $X,Y:\mathfrak{X}\left(  M\right)  $, we
have
\begin{align*}
&  \left(  Y\ast X\right)  \circ\left(  \lambda_{\left(  d_{1},d_{2}\right)
:D^{2}\left\{  \left(  1,2\right)  \right\}  }\left(  d_{1},d_{2}\right)
\right) \\
&  =\left(  X\ast Y\right)  \circ\left(  \lambda_{\left(  d_{1},d_{2}\right)
:D^{2}}\left(  d_{2},d_{1}\right)  \right)  \circ\left(  \lambda_{\left(
d_{1},d_{2}\right)  :D^{2}\left\{  \left(  1,2\right)  \right\}  }\left(
d_{1},d_{2}\right)  \right)
\end{align*}
so that we can define%
\[
\left[  X,Y\right]  :\equiv\left(  Y\ast X\right)  \overset{\cdot}{-}\left(
X\ast Y\right)  \circ\left(  \lambda_{\left(  d_{1},d_{2}\right)  :D^{2}%
}\left(  d_{2},d_{1}\right)  \right)
\]

\end{definition}

\begin{lemma}
\label{l6.3}Let $M$ be a microlinear type. Given $X_{1},X_{2},X_{3}%
:\mathfrak{X}\left(  M\right)  $, we define
\begin{align*}
\theta_{123}  &  :\equiv X_{3}\ast X_{2}\ast X_{1}\\
\theta_{132}  &  :\equiv\left(  X_{2}\ast X_{3}\ast X_{1}\right)  \circ\left(
\lambda_{\left(  d_{1},d_{2},d_{3}\right)  :D^{3}}\left(  d_{1},d_{3}%
,d_{2}\right)  \right) \\
\theta_{231}  &  :\equiv\left(  X_{1}\ast X_{3}\ast X_{2}\right)  \circ\left(
\lambda_{\left(  d_{1},d_{2},d_{3}\right)  :D^{3}}\left(  d_{2},d_{3}%
,d_{1}\right)  \right) \\
\theta_{321}  &  :\equiv\left(  X_{1}\ast X_{2}\ast X_{3}\right)  \circ\left(
\lambda_{\left(  d_{1},d_{2},d_{3}\right)  :D^{3}}\left(  d_{3},d_{2}%
,d_{1}\right)  \right)
\end{align*}
Then we have
\begin{align*}
\theta_{123}\circ\left(  \lambda_{\left(  d_{1},d_{2},d_{3}\right)
:D^{3}\left\{  \left(  2,3\right)  \right\}  }\left(  d_{1},d_{2}%
,d_{3}\right)  \right)   &  =\theta_{132}\circ\left(  \lambda_{\left(
d_{1},d_{2},d_{3}\right)  :D^{3}\left\{  \left(  2,3\right)  \right\}
}\left(  d_{1},d_{2},d_{3}\right)  \right) \\
\theta_{231}\circ\left(  \lambda_{\left(  d_{1},d_{2},d_{3}\right)
:D^{3}\left\{  \left(  2,3\right)  \right\}  }\left(  d_{1},d_{2}%
,d_{3}\right)  \right)   &  =\theta_{321}\circ\left(  \lambda_{\left(
d_{1},d_{2},d_{3}\right)  :D^{3}\left\{  \left(  2,3\right)  \right\}
}\left(  d_{1},d_{2},d_{3}\right)  \right) \\
\theta_{123}\circ\left(  \lambda_{\left(  d_{1},d_{2},d_{3}\right)
:D^{3}\left\{  \left(  1,2\right)  ,\left(  1,3\right)  \right\}  }\left(
d_{1},d_{2},d_{3}\right)  \right)   &  =\theta_{231}\circ\left(
\lambda_{\left(  d_{1},d_{2},d_{3}\right)  :D^{3}\left\{  \left(  1,2\right)
,\left(  1,3\right)  \right\}  }\left(  d_{1},d_{2},d_{3}\right)  \right) \\
\theta_{132}\circ\left(  \lambda_{\left(  d_{1},d_{2},d_{3}\right)
:D^{3}\left\{  \left(  1,2\right)  ,\left(  1,3\right)  \right\}  }\left(
d_{1},d_{2},d_{3}\right)  \right)   &  =\theta_{321}\circ\left(
\lambda_{\left(  d_{1},d_{2},d_{3}\right)  :D^{3}\left\{  \left(  1,2\right)
,\left(  1,3\right)  \right\}  }\left(  d_{1},d_{2},d_{3}\right)  \right) \\
\left[  X_{2},X_{3}\right]  \ast X_{1}  &  =\theta_{123}\underset{1}%
{\overset{\cdot}{-}}\theta_{132}\\
\left(  X_{1}\ast\left[  X_{2},X_{3}\right]  \right)  \circ\left(
\lambda_{\left(  d_{1},d_{2}\right)  :D^{2}}\left(  d_{2},d_{1}\right)
\right)   &  =\theta_{231}\underset{1}{\overset{\cdot}{-}}\theta_{321}\\
\left[  X_{1},\left[  X_{2},X_{3}\right]  \right]   &  =\left(  \theta
_{123}\underset{1}{\overset{\cdot}{-}}\theta_{132}\right)  \overset{\cdot}%
{-}\left(  \theta_{231}\underset{1}{\overset{\cdot}{-}}\theta_{321}\right)
\end{align*}

\end{lemma}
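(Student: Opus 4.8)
The plan is to separate the statement into its three natural groups --- the four compatibility identities, the two bracket identities, and the final identity for $\left[ X_{1},\left[ X_{2},X_{3}\right] \right]$ --- and to treat them in that order, since the last is a formal consequence of the middle two.

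First I would dispose of the four compatibility identities. All of them rest on the single commutation fact, already recorded in the definition of $\left[ X,Y\right]$, that for any two vector fields $X,Y:\mathfrak{X}\left( M\right)$ one has $X_{d_{1}}\circ Y_{d_{2}}=Y_{d_{2}}\circ X_{d_{1}}$ whenever $d_{1}d_{2}=0$ (this is itself an instance of Corollary \ref{cl4.1}, since both sides restrict to $X_{d_{1}}$ on one axis of $D^{2}\left\{ \left( 1,2\right) \right\}$ and to $Y_{d_{2}}$ on the other). Writing out the maps explicitly as $\theta_{123}=\lambda_{\left( d_{1},d_{2},d_{3}\right) }\left( X_{3}\right) _{d_{3}}\circ\left( X_{2}\right) _{d_{2}}\circ\left( X_{1}\right) _{d_{1}}$ and similarly for $\theta_{132},\theta_{231},\theta_{321}$, the first two identities follow by commuting $\left( X_{2}\right) _{d_{2}}$ past $\left( X_{3}\right) _{d_{3}}$, legitimate on $D^{3}\left\{ \left( 2,3\right) \right\}$ where $d_{2}d_{3}=0$, while leaving the common factor $\left( X_{1}\right) _{d_{1}}$ untouched; the last two follow by transporting $\left( X_{1}\right) _{d_{1}}$ from the innermost to the outermost position, which on $D^{3}\left\{ \left( 1,2\right) ,\left( 1,3\right) \right\}$ is permitted by two applications of the same commutation, since there $d_{1}d_{2}=d_{1}d_{3}=0$. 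These are exactly the hypotheses guaranteeing that $\theta_{123}\underset{1}{\overset{\cdot}{-}}\theta_{132}$ and $\theta_{231}\underset{1}{\overset{\cdot}{-}}\theta_{321}$ are defined, and, via Proposition \ref{p5.5}, that their outer strong difference is defined.

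The heart of the matter is the pair of bracket identities. The clean route is to first isolate a naturality property of the strong difference: for any map $R$ between microlinear sets and any $\theta_{1},\theta_{2}$ agreeing on $D^{2}\left\{ \left( 1,2\right) \right\}$, one has $\left( R\circ\theta_{1}\right) \overset{\cdot}{-}\left( R\circ\theta_{2}\right) =R\circ\left( \theta_{1}\overset{\cdot}{-}\theta_{2}\right)$. This is immediate from the homotopical uniqueness underlying Corollary \ref{cl5.1}: the map $R\circ\mathfrak{m}_{\left( \theta_{1},\theta_{2}\right) }$ satisfies the two boundary conditions defining $\mathfrak{m}_{\left( R\circ\theta_{1},R\circ\theta_{2}\right) }$, so the two mediating maps coincide, and evaluating at $\left( 0,0,d\right)$ gives the claim. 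For the first bracket identity I would unfold $\theta_{123}\underset{1}{\overset{\cdot}{-}}\theta_{132}$ through its definition --- the permutation $\left( d_{1},d_{2},d_{3}\right) \mapsto\left( d_{3},d_{1},d_{2}\right)$ followed by $\underset{3}{\overset{\cdot}{-}}$ --- and observe that both permuted maps have the shape $S\circ H_{i}$, where $S\left( W\right) :\equiv\lambda_{c:D}W\circ\left( X_{1}\right) _{c}$ carries the common $X_{1}$-factor together with the folded parameter, and $H_{1},H_{2}:D^{2}\rightarrow\left\Vert M\right\Vert _{0}\rightarrow\left\Vert M\right\Vert _{0}$ are precisely the two maps whose strong difference is $\left[ X_{2},X_{3}\right]$. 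Naturality then yields $S\circ\left( H_{1}\overset{\cdot}{-}H_{2}\right) =S\circ\left[ X_{2},X_{3}\right]$, and reading off the remaining definitional applications identifies this with $\left[ X_{2},X_{3}\right] \ast X_{1}$. The second bracket identity is entirely parallel, the only change being that the common factor now sits on the left, so one takes $S\left( W\right) :\equiv\lambda_{c:D}\left( X_{1}\right) _{c}\circ W$; the outer swap $\left( d_{1},d_{2}\right) \mapsto\left( d_{2},d_{1}\right)$ appearing in $\left( X_{1}\ast\left[ X_{2},X_{3}\right] \right) \circ\left( \ldots\right)$ is exactly what aligns $X_{1}$'s infinitesimal with the parameter slot of $\theta_{231}\underset{1}{\overset{\cdot}{-}}\theta_{321}$.

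With the two bracket identities in hand the final identity is purely formal: applying the definition of the bracket to $X_{1}$ and the vector field $\left[ X_{2},X_{3}\right]$ (which is genuinely a vector field, since $\left[ X_{2},X_{3}\right] _{0}=\mathrm{id}$) gives $\left[ X_{1},\left[ X_{2},X_{3}\right] \right] =\left( \left[ X_{2},X_{3}\right] \ast X_{1}\right) \overset{\cdot}{-}\left( \left( X_{1}\ast\left[ X_{2},X_{3}\right] \right) \circ\left( \lambda_{\left( d_{1},d_{2}\right) :D^{2}}\left( d_{2},d_{1}\right) \right) \right)$, and substituting the two bracket identities rewrites the right-hand side as $\left( \theta_{123}\underset{1}{\overset{\cdot}{-}}\theta_{132}\right) \overset{\cdot}{-}\left( \theta_{231}\underset{1}{\overset{\cdot}{-}}\theta_{321}\right)$. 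I expect the genuine difficulty to lie not in any of these conceptual steps but in the variable-slot bookkeeping of the middle paragraph: the definitions of $\ast$ and of the relative strong differences each permute the infinitesimal arguments, and one must verify that, once the dust settles, the bracket output lands in the slot matched to $X_{1}$'s input exactly as $\ast$ demands, rather than off by the swap $\left( d_{1},d_{2}\right) \mapsto\left( d_{2},d_{1}\right)$. Getting this alignment right is the one place where care is essential, and it is precisely the reason the relative strong differences were defined with their particular permutations.
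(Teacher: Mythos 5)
The paper states Lemma \ref{l6.3} (like Lemma \ref{l6.4}) with no proof at all, so there is nothing of the author's to compare your argument against; judged on its own merits, your proof is correct, and its three ingredients are exactly what the omitted proof needs. First, the commutation $X_{d_{1}}\circ Y_{d_{2}}=Y_{d_{2}}\circ X_{d_{1}}$ on $D^{2}\left\{ \left( 1,2\right) \right\} $, which the paper itself silently asserts inside the very definition of $\left[ X,Y\right] $, is correctly justified by the uniqueness of factorization underlying Corollary \ref{cl4.1}, applied to the microlinear set $\left\Vert M\right\Vert _{0}\rightarrow \left\Vert M\right\Vert _{0}$ (Proposition \ref{p3.1}); this settles the four restriction identities (one commutation for the first two, two commutations to move $\left( X_{1}\right) _{d_{1}}$ across for the last two) and, with Proposition \ref{p5.5}, all the definedness claims. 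Second, your naturality lemma $\left( R\circ \theta _{1}\right) \overset{\cdot }{-}\left( R\circ \theta _{2}\right) =R\circ \left( \theta _{1}\overset{\cdot }{-}\theta _{2}\right) $ is proved by the same uniqueness-of-mediator technique the paper uses throughout Propositions \ref{p5.1}--\ref{p5.4}, and it is the one genuinely new statement your argument adds. Third, the slot bookkeeping does come out right: currying the third variable after the cyclic permutation exhibits the two microcubes as $S\circ H_{1}$ and $S\circ H_{2}$ with $H_{1}=X_{3}\ast X_{2}$ and $H_{2}=\left( X_{2}\ast X_{3}\right) \circ \left( \lambda _{\left( d_{1},d_{2}\right) :D^{2}}\left( d_{2},d_{1}\right) \right) $, so naturality gives $\left( \theta _{123}\underset{1}{\overset{\cdot }{-}}\theta _{132}\right) \left( d_{1},d_{2}\right) =\left[ X_{2},X_{3}\right] _{d_{2}}\circ \left( X_{1}\right) _{d_{1}}$ and $\left( \theta _{231}\underset{1}{\overset{\cdot }{-}}\theta _{321}\right) \left( d_{1},d_{2}\right) =\left( X_{1}\right) _{d_{1}}\circ \left[ X_{2},X_{3}\right] _{d_{2}}$, which are precisely the two stated brackets; the last identity is then definitional, your observation that $\left[ X_{2},X_{3}\right] _{0}=\mathrm{id}$ supplying the check that $\left[ X_{2},X_{3}\right] :\mathfrak{X}\left( M\right) $.

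Two remarks. Your caution about slot conventions is well founded: as written, the paper's definition of $\underset{3}{\overset{\cdot }{-}}$ does not even typecheck literally (a map $D\rightarrow D\rightarrow M$ is composed with the swap of $D^{2}$), and the reading you adopt --- first slot the preserved direction, second slot the difference direction --- is the unique one consistent both with Lemma \ref{l6.3} itself and with the computations in the paper's proof of Theorem \ref{t5.1}. Also, the route the paper's own machinery suggests instead is the computational one of Lemma \ref{l5.5}/Corollary \ref{cl5.5}: represent all the microcubes through a single map $\mathfrak{k}$ out of $\mathcal{G}$ into $\left\Vert M\right\Vert _{0}\rightarrow \left\Vert M\right\Vert _{0}$ and read each strong difference off the coordinates, as is done for Theorem \ref{t5.1}; your naturality argument reaches the same conclusions with far less bookkeeping, at the modest price of stating and proving that one extra lemma.
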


\begin{lemma}
\label{l6.4}Let $M$ be a microlinear type. Given $X_{1},X_{2},X_{3}%
:\mathfrak{X}\left(  M\right)  $, we define
\begin{align*}
\theta_{123}  &  :\equiv X_{3}\ast X_{2}\ast X_{1}\\
\theta_{132}  &  :\equiv\left(  X_{2}\ast X_{3}\ast X_{1}\right)  \circ\left(
\lambda_{\left(  d_{1},d_{2},d_{3}\right)  :D^{3}}\left(  d_{1},d_{3}%
,d_{2}\right)  \right) \\
\theta_{213}  &  :\equiv\left(  X_{3}\ast X_{1}\ast X_{2}\right)  \circ\left(
\lambda_{\left(  d_{1},d_{2},d_{3}\right)  :D^{3}}\left(  d_{2},d_{1}%
,d_{3}\right)  \right) \\
\theta_{231}  &  :\equiv\left(  X_{1}\ast X_{3}\ast X_{2}\right)  \circ\left(
\lambda_{\left(  d_{1},d_{2},d_{3}\right)  :D^{3}}\left(  d_{2},d_{3}%
,d_{1}\right)  \right) \\
\theta_{312}  &  :\equiv\left(  X_{2}\ast X_{1}\ast X_{3}\right)  \circ\left(
\lambda_{\left(  d_{1},d_{2},d_{3}\right)  :D^{3}}\left(  d_{3},d_{1}%
,d_{2}\right)  \right) \\
\theta_{321}  &  :\equiv\left(  X_{1}\ast X_{2}\ast X_{3}\right)  \circ\left(
\lambda_{\left(  d_{1},d_{2},d_{3}\right)  :D^{3}}\left(  d_{3},d_{2}%
,d_{1}\right)  \right)
\end{align*}
Then we have
\begin{align*}
\theta_{123}\circ\left(  \lambda_{\left(  d_{1},d_{2},d_{3}\right)
:D^{3}\left\{  \left(  2,3\right)  \right\}  }\left(  d_{1},d_{2}%
,d_{3}\right)  \right)   &  =\theta_{132}\circ\left(  \lambda_{\left(
d_{1},d_{2},d_{3}\right)  :D^{3}\left\{  \left(  2,3\right)  \right\}
}\left(  d_{1},d_{2},d_{3}\right)  \right) \\
\theta_{231}\circ\left(  \lambda_{\left(  d_{1},d_{2},d_{3}\right)
:D^{3}\left\{  \left(  2,3\right)  \right\}  }\left(  d_{1},d_{2}%
,d_{3}\right)  \right)   &  =\theta_{321}\circ\left(  \lambda_{\left(
d_{1},d_{2},d_{3}\right)  :D^{3}\left\{  \left(  2,3\right)  \right\}
}\left(  d_{1},d_{2},d_{3}\right)  \right) \\
\theta_{231}\circ\left(  \lambda_{\left(  d_{1},d_{2},d_{3}\right)
:D^{3}\left\{  \left(  1,3\right)  \right\}  }\left(  d_{1},d_{2}%
,d_{3}\right)  \right)   &  =\theta_{213}\circ\left(  \lambda_{\left(
d_{1},d_{2},d_{3}\right)  :D^{3}\left\{  \left(  1,3\right)  \right\}
}\left(  d_{1},d_{2},d_{3}\right)  \right) \\
\theta_{312}\circ\left(  \lambda_{\left(  d_{1},d_{2},d_{3}\right)
:D^{3}\left\{  \left(  1,3\right)  \right\}  }\left(  d_{1},d_{2}%
,d_{3}\right)  \right)   &  =\theta_{132}\circ\left(  \lambda_{\left(
d_{1},d_{2},d_{3}\right)  :D^{3}\left\{  \left(  1,3\right)  \right\}
}\left(  d_{1},d_{2},d_{3}\right)  \right) \\
\theta_{312}\circ\left(  \lambda_{\left(  d_{1},d_{2},d_{3}\right)
:D^{3}\left\{  \left(  1,2\right)  \right\}  }\left(  d_{1},d_{2}%
,d_{3}\right)  \right)   &  =\theta_{321}\circ\left(  \lambda_{\left(
d_{1},d_{2},d_{3}\right)  :D^{3}\left\{  \left(  1,2\right)  \right\}
}\left(  d_{1},d_{2},d_{3}\right)  \right) \\
\theta_{123}\circ\left(  \lambda_{\left(  d_{1},d_{2},d_{3}\right)
:D^{3}\left\{  \left(  1,2\right)  \right\}  }\left(  d_{1},d_{2}%
,d_{3}\right)  \right)   &  =\theta_{213}\circ\left(  \lambda_{\left(
d_{1},d_{2},d_{3}\right)  :D^{3}\left\{  \left(  1,2\right)  \right\}
}\left(  d_{1},d_{2},d_{3}\right)  \right)
\end{align*}

\end{lemma}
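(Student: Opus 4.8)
The plan is to reduce all six required identities to a single commutativity principle for the flows of two vector fields, and then to verify that each identity is an instance of that principle. First I would unwind the definitions of $\ast$ and of the permutations so as to write each $\theta$ in the explicit form
\[
\theta_{abc}(d_1,d_2,d_3)=(X_c)_{d_c}\circ(X_b)_{d_b}\circ(X_a)_{d_a},
\]
in which every vector field $X_i$ carries its own parameter $d_i$ and the subscript string $abc$ records, from right to left, the order in which the three transformations are composed. A direct check from the definition $\theta_1\ast\theta_2:\equiv\lambda_{(d_1,\dots,d_{n_1+n_2})}\theta_1(d_{n_2+1},\dots,d_{n_1+n_2})\circ\theta_2(d_1,\dots,d_{n_2})$, together with the displayed reindexings, gives for instance $\theta_{123}=(X_3)_{d_3}\circ(X_2)_{d_2}\circ(X_1)_{d_1}$ and $\theta_{231}=(X_1)_{d_1}\circ(X_3)_{d_3}\circ(X_2)_{d_2}$.

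The key step is the following commutativity lemma: for $X,Y:\mathfrak{X}(M)$ and any $(d,e):D(2)$ one has $X_d\circ Y_e=Y_e\circ X_d$. To prove it I would work in the type $\left\Vert M\right\Vert_0\rightarrow\left\Vert M\right\Vert_0$, which is a microlinear set by the first and third clauses of Proposition \ref{p3.1} (since $\left\Vert M\right\Vert_0$ is microlinear). Both maps $\lambda_{(d,e):D(2)}X_d\circ Y_e$ and $\lambda_{(d,e):D(2)}Y_e\circ X_d$ from $D(2)$ into this microlinear set restrict along $\lambda_{d:D}(d,0)$ to $\lambda_{d:D}X_d$ and along $\lambda_{d:D}(0,d)$ to $\lambda_{d:D}Y_d$, where one uses $X_0=Y_0=\mathrm{id}$. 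Since a map out of $D(2)$ into a microlinear set is homotopically determined by its two axis restrictions, the homotopical uniqueness underlying Corollary \ref{cl4.1} (that is, the quasi-colimit Lemma \ref{l4.1}) forces the two maps to coincide. This is precisely the two-field refinement of the computation already used for Propositions \ref{p6.1} and \ref{p6.2}.

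With the explicit forms and the commutativity lemma in hand, each of the six identities is immediate. Inspecting the pairs, each compares two triple-composites that differ only by transposing two adjacent factors, and the restriction is taken exactly to the locus where the product of the two corresponding parameters vanishes: the conditions for $\theta_{123}$ against $\theta_{132}$ and for $\theta_{231}$ against $\theta_{321}$ transpose $(X_2)_{d_2}$ with $(X_3)_{d_3}$ on $D^3\{(2,3)\}$; those for $\theta_{231}$ against $\theta_{213}$ and for $\theta_{312}$ against $\theta_{132}$ transpose $(X_1)_{d_1}$ with $(X_3)_{d_3}$ on $D^3\{(1,3)\}$; and those for $\theta_{312}$ against $\theta_{321}$ and for $\theta_{123}$ against $\theta_{213}$ transpose $(X_1)_{d_1}$ with $(X_2)_{d_2}$ on $D^3\{(1,2)\}$. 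In every case the remaining factor is untouched, so applying the commutativity lemma to the relevant pair of fields, whose parameters multiply to zero on the given restriction, yields the stated equality after pre- or post-composition with that untouched factor.

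I expect the only genuine obstacle to be administrative rather than conceptual: correctly unwinding the nested $\ast$-products and the permutation reindexings so as to pin down which adjacent transposition and which nilpotency condition correspond to each of the six equations. Once the explicit composites are written down, the matching of each equation to the commutativity lemma is mechanical, and the lemma itself rests only on the microlinearity of $\left\Vert M\right\Vert_0\rightarrow\left\Vert M\right\Vert_0$ and the quasi-colimit description of $D(2)$.
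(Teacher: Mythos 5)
Your proposal is correct. There is a preliminary point: the paper states Lemma \ref{l6.4} without any proof at all (it is treated as routine bookkeeping), so there is no official argument to compare against line by line; your write-up supplies the missing verification, and it does so with exactly the tools the paper uses for the neighbouring results. Your normal forms $\theta_{abc}(d_1,d_2,d_3)=(X_c)_{d_c}\circ(X_b)_{d_b}\circ(X_a)_{d_a}$ are right (each permutation precomposition exactly cancels the index reversal built into $\ast$), and the reduction of the six identities to the single principle $X_d\circ Y_e=Y_e\circ X_d$ for $(d,e):D(2)$ is sound: in each case the two composites differ by one adjacent transposition whose two parameters have vanishing product on the indicated restriction, and the untouched factor passes through by whiskering. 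Your proof of that principle --- both $\lambda_{(d,e):D(2)}X_d\circ Y_e$ and $\lambda_{(d,e):D(2)}Y_e\circ X_d$ land in the set $\left\Vert M\right\Vert_0\rightarrow\left\Vert M\right\Vert_0$, which is microlinear by clauses 1 and 3 of Proposition \ref{p3.1}, and they have the same axis restrictions $\lambda_{d:D}X_d$ and $\lambda_{d:D}Y_d$ since $X_0=Y_0=\mathrm{id}$, hence coincide by the limit property of Lemma \ref{l4.1} --- is precisely the argument the paper runs for Proposition \ref{p6.2}, where both of these same two maps are identified with the single filler $\mathfrak{l}$; so your key lemma is already implicit there. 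One point worth keeping explicit, as you do: Corollary \ref{cl4.1} as stated asserts only existence of the filler, so the uniqueness you need must be drawn from the limit property of Lemma \ref{l4.1} itself, a uniqueness the paper also uses tacitly in Propositions \ref{p6.1} and \ref{p6.2}.
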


\begin{theorem}
\label{t6.3}Let $M$ be a microlinear type. Given $\alpha:\mathbb{R}$\ and
$X_{1},X_{2},X_{3}:\mathfrak{X}\left(  M\right)  $, we have
\begin{align}
\left[  X_{1}+X_{2},X_{3}\right]   &  =\left[  X_{1},X_{3}\right]  +\left[
X_{2},X_{3}\right] \label{t6.3.1}\\
\left[  \alpha X_{1},X_{2}\right]   &  =\alpha\left[  X_{1},X_{2}\right]
\label{t6.3.2}\\
\left[  X_{1},X_{2}\right]  +\left[  X_{2},X_{1}\right]   &  =0 \label{t6.3.3}%
\\
\left[  X_{1},\left[  X_{2},X_{3}\right]  \right]  +\left[  X_{2},\left[
X_{3},X_{1}\right]  \right]  +\left[  X_{3},\left[  X_{1},X_{2}\right]
\right]   &  =0 \label{t6.3.4}%
\end{align}
In a word, the Lie bracket $\left[  \cdot,\cdot\right]  $\ is bilinear,
antisymmetric, and satisfies the Jacobi identity.
\end{theorem}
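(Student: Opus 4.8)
The plan is to reduce each of the four identities (\ref{t6.3.1})--(\ref{t6.3.4}) to a corresponding algebraic property of the strong difference established in \S\ref{s5}, after rewriting every Lie bracket purely in terms of $\ast$-products and the strong difference $\overset{\cdot}{-}$. Throughout I abbreviate the transposition $\lambda_{\left(  d_{1},d_{2}\right)  :D^{2}}\left(  d_{2},d_{1}\right)  $ by $\sigma$, so that by definition $\left[  X,Y\right]  =\left(  Y\ast X\right)  \overset{\cdot}{-}\left(  X\ast Y\right)  \circ\sigma$, the compatibility hypothesis of this strong difference being exactly the one recorded in the definition of $\left[  \cdot,\cdot\right]  $. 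I also use freely that $D\rightarrow\left(  \left\Vert M\right\Vert _{0}\rightarrow\left\Vert M\right\Vert _{0}\right)  $ is a microlinear set by Proposition \ref{p3.1}, so that all the material of \S\ref{s4} and \S\ref{s5} applies to the maps below.

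First I would dispatch antisymmetry (\ref{t6.3.3}). Writing $\theta_{1}:\equiv X_{2}\ast X_{1}$ and $\theta_{2}:\equiv\left(  X_{1}\ast X_{2}\right)  \circ\sigma$, we have $\left[  X_{1},X_{2}\right]  =\theta_{1}\overset{\cdot}{-}\theta_{2}$. Applying the second identity of Proposition \ref{p5.1} (invariance of the strong difference under precomposition with $\sigma$) to $\left[  X_{2},X_{1}\right]  =\left(  X_{1}\ast X_{2}\right)  \overset{\cdot}{-}\left(  X_{2}\ast X_{1}\right)  \circ\sigma$ and using $\sigma\circ\sigma=\mathrm{id}$ rewrites it as $\theta_{2}\overset{\cdot}{-}\theta_{1}$. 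The ``in particular'' clause of Proposition \ref{p5.4} then gives $\left(  \theta_{1}\overset{\cdot}{-}\theta_{2}\right)  +\left(  \theta_{2}\overset{\cdot}{-}\theta_{1}\right)  =0$, which is precisely (\ref{t6.3.3}).

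For bilinearity (\ref{t6.3.1}) and homogeneity (\ref{t6.3.2}) the crux is to express the $\ast$-product with a sum, resp.\ a scalar multiple, of vector fields through the operations $\underset{1}{+}$ and $\alpha\underset{1}{\cdot}$ of \S\ref{s5}. Using Proposition \ref{p6.2} in the form $\left(  X_{1}+X_{2}\right)  _{d}=\left(  X_{1}\right)  _{d}\circ\left(  X_{2}\right)  _{d}$ together with the homotopical uniqueness in Corollary \ref{cl4.1}, I would verify
\begin{align*}
X_{3}\ast\left(  X_{1}+X_{2}\right)   &  =\left(  X_{3}\ast X_{1}\right)  \underset{1}{+}\left(  X_{3}\ast X_{2}\right)  ,\\
\left(  \left(  X_{1}+X_{2}\right)  \ast X_{3}\right)  \circ\sigma &  =\left(  \left(  X_{1}\ast X_{3}\right)  \circ\sigma\right)  \underset{1}{+}\left(  \left(  X_{2}\ast X_{3}\right)  \circ\sigma\right)
\end{align*}
by checking that $\lambda_{\left(  e_{1},e_{2}\right)  :D\left(  2\right)  }\left(  X_{3}\right)  _{d_{2}}\circ\left(  X_{1}\right)  _{e_{1}}\circ\left(  X_{2}\right)  _{e_{2}}$ realizes the map $\mathfrak{l}$ defining the tangent-vector sum. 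Feeding the four maps $X_{3}\ast X_{1}$, $X_{3}\ast X_{2}$, $\left(  X_{1}\ast X_{3}\right)  \circ\sigma$, $\left(  X_{2}\ast X_{3}\right)  \circ\sigma$ into Proposition \ref{p5.2}---whose four compatibility hypotheses reduce to the two bracket conditions for $\left[  X_{1},X_{3}\right]  $ and $\left[  X_{2},X_{3}\right]  $ plus two trivial identities coming from $\left(  X_{i}\right)  _{0}=\mathrm{id}$---then yields $\left[  X_{1}+X_{2},X_{3}\right]  =\left[  X_{1},X_{3}\right]  +\left[  X_{2},X_{3}\right]  $. Homogeneity is entirely parallel: since $\left(  \alpha X_{1}\right)  _{d}=\left(  X_{1}\right)  _{\alpha d}$ one has $X_{2}\ast\left(  \alpha X_{1}\right)  =\alpha\underset{1}{\cdot}\left(  X_{2}\ast X_{1}\right)  $ and likewise for the transposed product, and Proposition \ref{p5.3} gives (\ref{t6.3.2}).

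The main obstacle is the Jacobi identity (\ref{t6.3.4}), for which I would bring in the full machinery of \S\ref{s5}. Lemma \ref{l6.4} produces from $X_{1},X_{2},X_{3}$ the six maps $\theta_{123},\ldots,\theta_{321}:D^{3}\rightarrow\left(  \left\Vert M\right\Vert _{0}\rightarrow\left\Vert M\right\Vert _{0}\right)  $ and verifies exactly the six compatibility hypotheses of Theorem \ref{t5.1}. Lemma \ref{l6.3}, together with its two cyclic relabelings $X_{1}\mapsto X_{2}\mapsto X_{3}\mapsto X_{1}$, identifies the three iterated brackets with the three summands of the conclusion of Theorem \ref{t5.1}; for instance
\[
\left[  X_{1},\left[  X_{2},X_{3}\right]  \right]  =\left(  \theta_{123}\underset{1}{\overset{\cdot}{-}}\theta_{132}\right)  \overset{\cdot}{-}\left(  \theta_{231}\underset{1}{\overset{\cdot}{-}}\theta_{321}\right)  ,
\]
with $\left[  X_{2},\left[  X_{3},X_{1}\right]  \right]  $ and $\left[  X_{3},\left[  X_{1},X_{2}\right]  \right]  $ matching the $\underset{2}{\overset{\cdot}{-}}$- and $\underset{3}{\overset{\cdot}{-}}$-summands respectively. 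Theorem \ref{t5.1} asserts that these three summands add to $0$, which is (\ref{t6.3.4}). The delicate point is purely combinatorial: one must confirm that the specific permutations built into the definitions of the $\theta_{ijk}$ in Lemma \ref{l6.4} align the cyclically relabeled instances of Lemma \ref{l6.3} with the precise indexing $\underset{1}{\overset{\cdot}{-}},\underset{2}{\overset{\cdot}{-}},\underset{3}{\overset{\cdot}{-}}$ of the three terms in Theorem \ref{t5.1}, so that the reorderings absorbed into the relative strong differences leave no residual transposition unaccounted for.
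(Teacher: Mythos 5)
Your proposal is correct and takes essentially the same route as the paper: the paper's own proof consists precisely of citing Proposition \ref{p5.2} for (\ref{t6.3.1}), Proposition \ref{p5.3} for (\ref{t6.3.2}), Proposition \ref{p5.4} for (\ref{t6.3.3}), and Theorem \ref{t5.1} together with Lemmas \ref{l6.3} and \ref{l6.4} for (\ref{t6.3.4}), which is exactly the reduction you carry out. The details you supply beyond that---using Proposition \ref{p5.1} to rewrite $\left[ X_{2},X_{1}\right]$ as $\theta_{2}\overset{\cdot}{-}\theta_{1}$, the identification $X_{3}\ast\left( X_{1}+X_{2}\right) =\left( X_{3}\ast X_{1}\right) \underset{1}{+}\left( X_{3}\ast X_{2}\right)$ via Proposition \ref{p6.2} and Corollary \ref{cl4.1}, and the cyclic-relabeling bookkeeping matching the three iterated brackets to the three summands of Theorem \ref{t5.1}---are exactly the steps the paper leaves implicit.
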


\begin{proof}
The property (\ref{t6.3.1}) follows from Proposition \ref{p5.2}. The property
(\ref{pt6.3.2}) follows from Proposition \ref{p5.3}, The property
(\ref{t6.3.3}) follows from Proposition \ref{p5.4}. The property
(\ref{t6.3.4}) follows from Theorem \ref{t5.1}.
\end{proof}

\end{document}